\newtheorem{thm}{Theorem}[section]
\newtheorem{lem}[thm]{Lemma}
\newtheorem{prop}[thm]{Proposition}
\newtheorem{cor}[thm]{Corollary}
\theoremstyle{definition}
\newtheorem{ex}[thm]{Example}
\newtheorem{remark}[thm]{Remark}
\newtheorem{defn}[thm]{Definition}
\newtheorem*{defn*}{Definition}
\newtheorem*{thm*}{Theorem}
\newtheorem*{prop*}{Proposition}
\newtheorem*{cor*}{Corollary}
\def\Tab{P_{SK}}  
\def\rTab{Q_{SK}} 
\def\ksw{\hat{\equiv}}  
\def\pr{{^{\prime}}}
\def\row{\mathfrak{row}}
\def\insert{\xleftarrow{SK}}
\def\D{\mathcal{D}}
\def\std{\operatorname{std}}
\def\st{\mathfrak{st}}
\def\symfcn{K_{\lambda}}
\newcommand{\sss}{\scriptstyle}
\newcommand{\mw}{\mathcal{W}}
\newcommand{\mo}{\mathcal{O}}
\begin{document}
\title{Shifted Hecke insertion and the $K$-theory of $OG(n,2n+1)$}
\author{Zachary Hamaker, Adam Keilthy, Rebecca Patrias, Lillian Webster, Yinuo Zhang, Shuqi Zhou}
\date{}
\begin{abstract}
Patrias and Pylyavskyy introduced shifted Hecke insertion as an application of their theory of dual filtered graphs.
We use shifted Hecke insertion to construct symmetric function representatives for the $K$-theory of the orthogonal Grassmannian.
These representatives are closely related to the shifted Grothendieck polynomials of Ikeda and Naruse.
We then recover the $K$-theory structure coefficients of Clifford-Thomas-Yong/Buch-Samuel by introducing a shifted $K$-theoretic Poirier-Reutenauer algebra.
Our proofs depend on the theory of shifted $K$-theoretic jeu de taquin and the weak $K$-Knuth relations.

\end{abstract}
\maketitle

\section{Introduction}

In~\cite{PPshifted}, Patrias and Pylyavskyy introduce shifted Hecke insertion as an application of their theory of dual filtered graphs.
It is a bijection between finite words in the positive integers and pairs $(\Tab,\rTab)$ of shifted tableaux of the same shape, where $\Tab$ is increasing and $\rTab$ is set-valued.
Shifted Hecke insertion is the shifted analogue of Hecke insertion, a bijection introduced by Buch, Kresch, Shimozono, Tamvakis, and Yong to derive a Littlewood-Richardson rule for the $K$-theory of Grassmannians~\cite{HeckeInsertion}.
The primary task of this paper is to relate shifted Hecke insertion to the $K$-theory of the orthogonal Grassmannian $OG(n,2n+1)$.

The $K$-theory of the orthogonal Grassmannian is well understood.
It has as a basis $\mo_\lambda$ of Schubert structure sheaves indexed by shifted shapes.
The product structure is determined by a combinatorial Littlewood-Richardson rule
\[
\mo_\lambda \cdot \mo_\mu = \sum_{\nu} (-1)^{|\nu| - |\lambda| - |\mu|} c^\nu_{\lambda, \mu} \mo_\nu
\]
first proven by Clifford, Thomas, and Yong~\cite{clifford2014k} using shifted $K$-theoretic jeu de taquin and a Pieri rule of Buch and Ravikumar~\cite{buch2012pieri}. It was later proven in more generality by Buch and Samuel~\cite{BuchandSamuel2013}.
Moreover, the factorial Schur $P$-functions of Ikeda and Naruse are equivariant $K$-theory representatives~\cite{ikeda2013k}.
These can be specialized to the \emph{shifted stable Grothendieck polynomials} $GP_\lambda$, which form representatives for each $\mo_\lambda$.
We use shifted Hecke insertion to construct an alternative combinatorial approach to these results.

For each shifted shape $\lambda$, we define the \emph{weak shifted stable Grothendieck polynomial} $K_\lambda$ as a generating function over multiset-valued shifted tableaux.
We demonstrate an equivalent definition of $K_\lambda$ using shifted Hecke insertion.
\begin{thm}
\label{t:wssG}
The weak shifted stable Grothendieck polynomial $K_\lambda$ is symmetric.

\end{thm}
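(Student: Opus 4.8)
The plan is to prove symmetry by exhibiting, for each $i \geq 1$, a weight-transposing involution on the set of multiset-valued shifted tableaux of shape $\lambda$ --- a $K$-theoretic, shifted analogue of the classical Bender--Knuth involution. Since the adjacent transpositions $s_i = (i,i+1)$ generate the group of finitely supported permutations of the variables, and since every monomial of $K_\lambda$ involves only finitely many variables, it suffices to produce for each $i$ a bijection $T \mapsto T'$ on shape-$\lambda$ tableaux that interchanges the total multiplicity of $i$-type entries and that of $(i+1)$-type entries while fixing all other multiplicities. Such a family of involutions shows $K_\lambda$ is invariant under every $s_i$, hence symmetric.

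First I would isolate, inside a tableau $T$, the entries equal to $i$ or $i+1$ (together with their primed variants, in whatever convention the multiset-valued shifted tableaux are defined), since all other entries are untouched by the involution and do not interact with the swap. Following the ordinary case, I would declare an occurrence of an $i$-type entry lying immediately above an occurrence of an $(i+1)$-type entry in the same column to be \emph{paired} and hence frozen; the remaining \emph{free} occurrences, read row by row, form in each row a block of $i$'s followed by a block of $(i+1)$'s, and the involution reassigns the sizes of these two blocks so as to exchange them. The genuinely new subtlety in the multiset-valued setting is that a single cell may hold several copies of these symbols, and a cell may simultaneously contain both an $i$-type and an $(i+1)$-type entry; I would therefore track multiplicities cell by cell and define the reassignment so that the row-weakly-increasing and column-strictness conditions are preserved.

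The main obstacle is precisely this bookkeeping: verifying that the proposed redistribution of free entries always outputs a \emph{valid} multiset-valued shifted tableau of the same shape $\lambda$, that the map is genuinely an involution, and that it correctly interchanges the two exponents of $x_i$ and $x_{i+1}$. The shifted structure makes this more delicate than the straight-shape case, because of the rules governing primed entries (at most one primed copy per row, one unprimed per column, and the diagonal convention) and because pairings across adjacent columns must respect the shifted geometry. I expect to treat the diagonal cells and the primed/unprimed distinction as separate cases, checking in each that the set of paired occurrences is preserved by the map and that the freed entries can be redistributed consistently.

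As a fallback, the equivalent insertion-theoretic description of $K_\lambda$ established just before the statement suggests a second route. Writing $K_\lambda$ as a generating function over words whose shifted Hecke insertion tableau $\Tab$ equals a fixed tableau of shape $\lambda$, symmetry would reduce to closing this set of words under a weight-swapping operation at the level of words. Since $\Tab(w)$ can be expected to depend only on the weak $K$-Knuth equivalence class of $w$, one could build the word-level involution within a single class and transport it through the insertion bijection; I would keep this approach in reserve should the diagonal case analysis for the direct involution become unwieldy.
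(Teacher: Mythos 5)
Your proposal is a plan rather than a proof: the entire content of the argument would live in the construction of the shifted, multiset-valued Bender--Knuth involution, and that construction is exactly the step you leave unexecuted, flagging it yourself as ``the main obstacle.'' This is a genuine gap, not a routine verification. Already for ordinary shifted tableaux (the lowest-degree terms of $K_\lambda$, i.e.\ the Schur $P$-functions) the naive Bender--Knuth pairing fails and the known repairs require careful case analysis around the diagonal and the primed entries; in the weak set-valued setting the difficulty compounds, because conditions (4) and (5) of Definition~\ref{wsvst} are asymmetric (unprimed entries are constrained per column, primed entries per row), a single box may contain several copies of $i$, $i'$, $i+1$, $(i+1)'$ simultaneously, and the ``frozen/free'' dichotomy you import from the straight-shape case has no evident analogue for the primed occurrences. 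Until you exhibit the map on at least these configurations and verify it is a weight-swapping involution landing in valid tableaux, the symmetry claim is not established. Your fallback is similarly incomplete: you would still need to produce a weight-swapping involution, now on words with a fixed insertion tableau, which is no easier.

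For contrast, the paper avoids any involution. It expands $K_\lambda$ into fundamental quasisymmetric functions as $\sum_{\Tab(w)=T} f_{\D(w)}$ for a fixed \emph{unique rectification target} $T$ (Theorem~\ref{kisgood}, resting on the descent-preservation result Theorem~\ref{thm:descent}), observes that the fiber $\{w : \Tab(w)=T\}$ is a single weak $K$-Knuth class, and then uses the fact that weak $K$-Knuth equivalence refines $K$-Knuth equivalence to partition that class into finitely many ordinary Hecke-insertion fibers. Each such fiber sums to a (signless) stable Grothendieck polynomial $G_\mu$, already known to be symmetric, so $K_\lambda=\sum_S G_{\mu_S}$ is symmetric. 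This route both sidesteps the delicate tableau combinatorics you would need and yields the extra structural information that $K_\lambda$ is $G$-positive. If you wish to salvage your approach, the honest comparison is that a correct shifted $K$-Bender--Knuth involution would be a new and nontrivial combinatorial result in its own right, not a step one can defer.
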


The proof of Theorem~\ref{t:wssG} proceeds by relating shifted Hecke insertion to shifted $K$-jeu de taquin.
In doing so, we show that shifted Hecke insertion respects the \emph{weak $K$-Knuth equivalence} of Buch and Samuel~\cite{BuchandSamuel2013}.
This allows us to expand $K_\lambda$ in terms of the (signless) \emph{stable Grothendieck polynomials} $G_\mu$, which are symmetric functions that form $K$-theory representatives for the Grassmannian.
We will see in Proposition~\ref{prop:Ikeda} that the $K_\lambda$ and the $GP_\lambda$ of Ikeda and Naruse are closely related.
As a corollary, we derive a new proof of the symmetry of $GP_\lambda$.
Surprisingly, while $GP_\lambda$ satisfies the $Q$-cancellation property, we show that $K_\lambda$ does not, hence cannot be expressed as a sum of Schur $P$-functions.

Using weak $K$-Knuth equivalence, we define a shifted $K$-theoretic Poirier-Reutenauer algebra.
This generalizes the shifted Poirier-Reutenauer algebra of Jang and Li~\cite{JLshifted}, a shifted analogue of the Poirier-Reutenauer Hopf algebra~\cite{poirier1995algebres}.
Our approach follows work of Patrias and Pylyavskyy on a $K$-theoretic Poirier-Reutenauer bialgebra~\cite{PPK}.
The shifted $K$-theoretic Poirier-Reutenauer algebra does not have the coalgebra structure analogous to that of shifted Poirier-Reutenauer and $K$-theoretic Poirier Reutenauer. 
Using the shifted $K$-theoretic Poirier-Reutenauer algebra, we define a Littlewood-Richardson rule for the product $K_\lambda \cdot K_\mu$.
This rule coincides up to sign with the rule of Clifford, Thomas, and Yong~\cite{clifford2014k} and Buch and Samuel~\cite{BuchandSamuel2013}.
As a consequence, we have our main result.
\begin{thm}
\label{t:ktheory}
Each $K_\lambda$ is a representative for $\mo_\lambda$.

\end{thm}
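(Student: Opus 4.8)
The plan is to deduce the statement from the Littlewood-Richardson rule for the product $K_\lambda \cdot K_\mu$ that we obtain from the shifted $K$-theoretic Poirier-Reutenauer algebra, using as input the known fact that the shifted stable Grothendieck polynomials $GP_\lambda$ of Ikeda and Naruse are representatives for $\mo_\lambda$ \cite{ikeda2013k}. Recall that $K(OG(n,2n+1))$ is a free $\mathbb{Z}$-module with basis $\{\mo_\lambda\}$ indexed by the strict partitions contained in the staircase $\rho_n=(n,n-1,\dots,1)$, that $\mo_\nu = 0$ for every strict $\nu \not\subseteq \rho_n$, and that the ring structure is completely determined by the Clifford-Thomas-Yong \cite{clifford2014k} and Buch-Samuel \cite{BuchandSamuel2013} coefficients through
\[
\mo_\lambda \cdot \mo_\mu = \sum_{\nu \subseteq \rho_n} (-1)^{|\nu|-|\lambda|-|\mu|}\, c^\nu_{\lambda,\mu}\, \mo_\nu .
\]
Being a representative means that the surjection $\pi$ from the relevant ring of symmetric functions onto $K(OG(n,2n+1))$, which realizes $\pi(GP_\lambda)=\mo_\lambda$ for $\lambda\subseteq\rho_n$, also satisfies $\pi(K_\lambda)=\mo_\lambda$. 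So the whole task is to compute $\pi(K_\lambda)$.

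First I would record the multiplicative identity produced by the shifted $K$-theoretic Poirier-Reutenauer algebra, namely
\[
K_\lambda \cdot K_\mu = \sum_{\nu} (-1)^{|\nu|-|\lambda|-|\mu|}\, c^\nu_{\lambda,\mu}\, K_\nu ,
\]
where now $\nu$ ranges over \emph{all} strict partitions and the $c^\nu_{\lambda,\mu}$ are exactly the coefficients above. Symmetry of each $K_\nu$ (Theorem~\ref{t:wssG}) guarantees that both sides genuinely lie in the ring of symmetric functions, so the identity is meaningful and can be pushed through $\pi$. Applying $\pi$ and using $\mo_\nu=0$ for $\nu\not\subseteq\rho_n$, the universal product collapses to the geometric product displayed above; thus the assignment $K_\lambda \mapsto \mo_\lambda$ is compatible with multiplication, and what remains is to verify the values $\pi(K_\lambda)$ themselves.

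To pin these down I would run an induction on $|\lambda|$, with base case $K_\emptyset = 1 = \mo_\emptyset$, the inductive step supplied by the Pieri-type instances of the Littlewood-Richardson rule, which generate $K(OG(n,2n+1))$ as a ring. Alternatively, and more directly, I would feed the explicit relationship between $K_\lambda$ and the $GP_\mu$ furnished by Proposition~\ref{prop:Ikeda} through $\pi$: since $\pi(GP_\mu)=\mo_\mu$ and the large shapes vanish in the quotient, this computation collapses to $\pi(K_\lambda)=\mo_\lambda$. Either route identifies $\pi(K_\lambda)$ with $\mo_\lambda$ and establishes the theorem.

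The main obstacle I anticipate is the passage from the universal, stable setting—where the $K_\lambda$ are symmetric functions indexed by all strict partitions and their products contain infinitely many terms—to the finite-rank geometric ring $K(OG(n,2n+1))$. Concretely, one must show that the contributions indexed by shapes $\nu\not\subseteq\rho_n$, together with the failure of $K_\lambda$ to satisfy the $Q$-cancellation property (so that $K_\lambda \neq GP_\lambda$ in general), are all absorbed into the kernel of $\pi$ and do not obstruct the ring-homomorphism property. This is precisely where the sign-matching of the Littlewood-Richardson coefficients with those of Clifford-Thomas-Yong and Buch-Samuel, and the comparison in Proposition~\ref{prop:Ikeda}, do the essential work.
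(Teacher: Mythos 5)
Your core deduction is the one the paper actually uses: Theorem~\ref{t:ktheory} follows from the fact that the Littlewood--Richardson rule of Theorem~\ref{LR product} reproduces, up to the predictable sign, the structure constants $c^\nu_{\lambda,\mu}$ of Clifford--Thomas--Yong and Buch--Samuel, and your remark that the shapes $\nu\not\subseteq\rho_n$ cause no trouble (because $c^\nu_{\lambda,\mu}\neq 0$ forces $\nu\supseteq\lambda$, so those $K_\nu$ span an ideal) is a correct elaboration of the paper's one-line argument. One correction of detail: Theorem~\ref{LR product} reads $K_\lambda K_\mu=\sum_\nu c^\nu_{\lambda,\mu}K_\nu$ with \emph{no} signs; the signs $(-1)^{|\nu|-|\lambda|-|\mu|}$ appear only on the geometric side, so the homomorphism is $K_\lambda\mapsto(-1)^{|\lambda|}\mo_\lambda$ (the usual signless-versus-signed Grothendieck convention), not $K_\lambda\mapsto\mo_\lambda$ together with a signed product formula for the $K$'s.

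The genuine gap is in your proposed verification that $\pi(K_\lambda)=\mo_\lambda$ for the surjection $\pi$ normalized by $\pi(GP_\mu)=\mo_\mu$. That $\pi$ is defined on the span of the $GP_\mu$, i.e.\ on the ring of functions satisfying the $Q$-cancellation property, and the paper proves that $K_\lambda$ does \emph{not} lie in that ring (it is not a sum of Schur $P$-functions, hence not a sum of $GP_\mu$'s). So the expression $\pi(K_\lambda)$ is not even defined, and Proposition~\ref{prop:Ikeda} cannot be ``fed through $\pi$'': it is a substitution of variables $x_i\mapsto -x_i/(1-x_i)$, not a linear identity, and is not compatible with a projection that kills the $GP_\nu$ for $\nu\not\subseteq\rho_n$. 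The Pieri-type induction runs into the same identification problem. The correct reading of ``representative,'' and the paper's intended argument, is that the structure-constant matching by itself furnishes a surjective ring homomorphism from $\operatorname{span}\{K_\lambda\}$ (the $K_\lambda$ are independent since their lowest-degree terms are the $P_\lambda$) onto $K(OG(n,2n+1))$ sending $K_\lambda\mapsto(-1)^{|\lambda|}\mo_\lambda$ for $\lambda\subseteq\rho_n$ and $K_\nu\mapsto 0$ otherwise; no comparison with the $GP$-defined projection is needed or possible.
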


The remainder of the paper is structured as follows.
In the next section, we define shifted Hecke insertion and show how it relates to shifted $K$-theoretic jeu de taquin.
This allows us to demonstrate the relationship between shifted Hecke insertion and the weak $K$-Knuth equivalence relations.
The third section is devoted to defining the $K_\lambda$, expressing them in terms of shifted Hecke insertion and proving Theorem~\ref{t:wssG}.
In the fourth and final section, we develop the shifted $K$-theoretic Poirier-Reutenauer algebra and prove Theorem~\ref{t:ktheory}.

\section{Shifted Hecke Insertion and Weak $K$-Knuth Equivalence}
 
We show that the shifted Hecke insertion given in \cite{PPshifted} respects the weak $K$-Knuth equivalence given in \cite{BuchandSamuel2013}. 
Before presenting our argument, we review previous work on increasing shifted tableaux, shifted Hecke insertion, and shifted $K$-jeu de taquin.

\subsection{Increasing shifted tableaux}
To each strict partition $\lambda = (\lambda_1 > \lambda_2 > \ldots > \lambda_k)$ we associate the \emph{shifted shape}, which is an array of boxes where the $i$th row has $\lambda_i$ boxes and is indented $i-1$ units.
A \emph{shifted tableau} is a filling of the shifted shape with positive integers.
A shifted tableau is \emph{increasing} if the labels are strictly increasing from left to right along rows and top to bottom down columns. 
The \textit{reading word} of an increasing shifted tableau $T$, denoted $\row(T)$, is the word obtained by reading the entries left to right from the bottom row to the top row.
For example, the first two shifted tableaux below are increasing, while the third one is not.
The first two shifted tableaux have reading words $635124$ and $8471367$, respectively.

\begin{center}
\begin{ytableau}
1 & 2 & 4 \\
\none & 3 & 5 \\
\none & \none& 6 
\end{ytableau} \hspace{.5in}
\begin{ytableau}
1 & 3 & 6 & 7 \\
\none & 4 & 7 \\
\none & \none & 8 
\end{ytableau} \hspace{.5in}
\begin{ytableau}
1 & 2 & 4 & 6\\
\none & 3 & 4 \\
\none & \none & 5 
\end{ytableau} 
\end{center}

\begin{lem}\label{finitetableau}
There are only finitely many increasing shifted tableaux filled with a given finite alphabet.

\end{lem}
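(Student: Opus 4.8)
The plan is to show that, once the alphabet is fixed, both the number of possible shapes and the number of fillings of each shape are finite; the lemma then follows by summing finitely many finite quantities. Suppose every entry is drawn from $\{1, 2, \ldots, n\}$ for some fixed $n$. First I would bound the number of \emph{columns}. Since the entries of any single row strictly increase from left to right and all lie in an alphabet of size $n$, each row has at most $n$ boxes; as the first row of a shifted shape is the longest, the shape has at most $n$ columns, i.e.\ $\lambda_1 \leq n$.

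Next I would bound the number of \emph{rows} $k$. Because a shifted shape is indexed by a strict partition $\lambda_1 > \lambda_2 > \cdots > \lambda_k \geq 1$, a strictly decreasing sequence of $k$ positive integers forces $\lambda_1 \geq k$. Combined with $\lambda_1 \leq n$ from the previous step, this gives $k \leq n$. Alternatively, one can argue directly from increasingness: for the entry $T(i,j)$ in box $(i,j)$ we have $T(i,i) < T(i,i+1) < T(i+1,i+1)$ along the diagonal, so the diagonal entries strictly increase and again $k \leq n$.

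With at most $n$ rows and at most $n$ columns, every increasing shifted tableau on this alphabet has shape contained in the $n \times n$ square, so $|\lambda| \leq n^2$ and there are only finitely many admissible shapes $\lambda$. For each fixed shape, a filling is simply a function from its (at most $n^2$) boxes into an alphabet of size $n$, of which there are at most $n^{n^2}$, whether increasing or not. Summing over the finitely many shapes yields a finite total, which completes the argument.

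I would expect no serious obstacle: the one step requiring a moment's thought is the bound on the number of rows, which is settled either by the strictness of $\lambda$ or by the short diagonal computation above, and everything else is routine counting. The essential point is merely that strict increase in each direction caps both dimensions of the shape by the alphabet size.
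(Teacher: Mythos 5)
Your proposal is correct and follows essentially the same route as the paper, which simply observes that each row and column can have length at most $n$; you fill in the routine counting details (finitely many admissible strict partitions, finitely many fillings of each) that the paper leaves implicit. The only cosmetic quibble is that a shifted shape with $\lambda_1 \leq n$ and at most $n$ rows occupies columns up to $2n-1$ because of the indentation, so it is not literally contained in an $n \times n$ square, but your bound of at most $n^2$ boxes and finitely many shapes is still valid.
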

\begin{proof}
If the alphabet has $n$ letters, each row and column of the tableau can be no longer than $n$.  
\end{proof}

\subsection{Shifted Hecke Insertion}
We now restate the rules for shifted Hecke insertion, introduced by Patrias and Pylyavskyy in~\cite{PPshifted}.
It is simultaneously a shifted analogue of Buch, Kresch, Shimozono, Tamvakis and Yong's Hecke insertion~\cite{HeckeInsertion} and a $K$-theoretic analogue of Sagan-Worley insertion, due independently to Sagan and Worley~\cite{journals/jct/Sagan87, worley1984theory}. 
From this point on, ``insertion" will always refer to shifted Hecke insertion unless stated otherwise. 

First, we describe how to insert a positive integer $x$ into a given shifted increasing tableau $T$, with examples interspersed for clarity. 
We start by inserting $x$ into the first row of $T$. 
For each insertion, we assign a box to record where the insertion terminates. 
This data will be used when we define the recording tableau in Section~\ref{svst}.  

The rules for inserting $x$ into a row or column of $T$ are as follows:
\begin{enumerate}
\item[(1)] If $x$ is weakly larger than all integers in the row (resp. column) and adjoining $x$ to the end of the row (resp. column) results in an increasing tableau $T'$, then $T'$ is the resulting tableau. 
We say the insertion terminates at the new box. 
\end{enumerate}
\noindent Inserting 5 into the first row of the left tableau gives us the right tableau below. The insertion terminates at box $(1,4)$.
\begin{center}
\begin{ytableau}
1 & 2 & 4 \\
\none & 3 & 5 \\
\none & \none & 6 
\end{ytableau} \hspace{1in}
\begin{ytableau}
1 & 2 & 4 & 5 \\
\none & 3 & 5 \\
\none & \none & 6 
\end{ytableau}
\end{center}

\begin{enumerate}
\item[(2)] If $x$ is weakly larger than all integers in the row (resp. column) and adjoining $x$ to the end of the row (resp. column) does not result in an increasing tableau, then $T' = T$. 
If $x$ is row inserted into a nonempty row, we say the insertion terminated at the box at the bottom of the column containing the rightmost box of this row. 
If $x$ is row inserted into an empty row, we say that the insertion terminated at the rightmost box of the previous row. 
If $x$ is column inserted, we say the insertion terminated at the rightmost box of the row containing the bottom box of the column $x$ could not be added to. 
\end{enumerate}

\noindent Adjoining 5 to the second row of the tableau on the left does not result in an increasing tableau. Thus the insertion of 5 into the second row of the tableau on the left terminates at (2,3) and gives us the tableau on the right.
\begin{center}
\begin{ytableau}
1 & 2 & 4\\
\none & 3 & 5
\end{ytableau} \hspace{1in}
\begin{ytableau}
1 & 2 & 4\\
\none &3 & 5
\end{ytableau}
\end{center}
Adjoining 2 to the (empty) second row of the tableau below does not result in an increasing tableau. The insertion ending in this failed row insertion terminates at $(1,3)$.
\begin{center}
\begin{ytableau}
1 & 2 & 3
\end{ytableau}
\end{center}
Adjoining 3 to the end of the third column of the left tableau does not result in an increasing tableau. This insertion terminates at $(1,3)$.
\begin{center}
\begin{ytableau}
1 & 2 & 3 \\
\none & 3 
\end{ytableau} 
\end{center}

For the next two rules, suppose the row (resp. column) contains a box with label strictly larger than $x$, and let $y$ be the smallest such box.
\begin{enumerate}
\item[(3)]If replacing $y$ with $x$ results in an increasing tableau, then replace $y$ with $x$. 
In this case, $y$ is the output integer. 
If $x$ was inserted into a column or if $y$ was on the main diagonal, proceed to insert all future output integers into the next column to the right. 
If $x$ was inserted into a row and $y$ was not on the main diagonal, then insert $y$ into the row below.
\end{enumerate}
\noindent Row inserting 3 into the first row of the tableau on the left results in the tableau below on the right. 
This insertion terminates at $(2,3)$.
\begin{center}
\begin{ytableau}
1 & 2 & 4 \\
\none & 3 
\end{ytableau}\hspace{1in}
\begin{ytableau}
1 & 2 & 3 \\
\none & 3 & 4
\end{ytableau} 
\end{center}
To insert 3 into the second row of the tableau below on the left, replace 4 with 3, and column insert 4 into the third column. 
The resulting tableau is on the right.
\begin{center}
\begin{ytableau}
1 & 2 & 3 & 5 \\
\none & 4 & 6 \\
\none & \none & 8 
\end{ytableau} \hspace{1in}
\begin{ytableau}
1 & 2 & 3 & 5 \\
\none & 3 & 4 & 6 \\
\none & \none & 8 
\end{ytableau}
\end{center}

\begin{enumerate}
\item[(4)]If replacing $y$ with $x$ does not result in an increasing tableau, then do not change the row (resp. column). 
In this case, $y$ is the output integer. 
If $x$ was inserted into a column or if $y$ was on the main diagonal, proceed to insert all future output integers into the next column to the right. 
If $x$ was inserted into a row, then insert $y$ into the row below.
\end{enumerate}

\noindent If we insert 3 into the first row of the tableau below, notice that replacing 5 with 3 does not create an increasing tableau. 
Hence row insertion of 3 into the first row produces output integer 5, which is inserted into the second row. 
Replacing 6 with 5 in the second row does not create an increasing tableau. This produces output integer 6. 
Adjoining 6 to the third row does not result in an increasing tableau. 
Thus inserting 3 into the tableau below does not change the tableau. 
This insertion terminates at $(2,3)$. 
\begin{center}
\begin{ytableau}
1 & 3 & 5 \\
\none & 4 & 6 \\
\end{ytableau} 
\end{center}

For any given word $w = w_1w_2\cdots w_n$, we define the \textit{insertion tableau} $\Tab(w)$ of $w$ to be $(\cdots((\emptyset\insert w_1)\insert w_2)\cdots\insert w_n)$, where $\emptyset$ denotes the empty shape and $\insert$ denotes the insertion of a single letter. 
For example, the sequence of tableaux obtained  when inserting $2115432$ is
\begin{center}
\begin{ytableau}
2 \end{ytableau}\hspace{.1in}
\begin{ytableau}
1 & 2 \end{ytableau}\hspace{.1in}
\begin{ytableau}
1 & 2 \end{ytableau}\hspace{.1in}
\begin{ytableau}
1 & 2  &5 \end{ytableau}\hspace{.1in}
\begin{ytableau}
1 & 2 & 4 \\ \none &5\end{ytableau}\hspace{.1in}
\begin{ytableau}
1 & 2 & 3 \\\none & 4 & 5 \end{ytableau}\hspace{.1in}
\begin{ytableau}
1 & 2 & 3 & 5\\ \none & 3 & 4 \end{ytableau}
\end{center}
 The tableau on the right is $\Tab(2115432)$.

For any interval $I$, we define $T|_{I}$ to be the tableau obtained from $T$ by deleting all boxes with labels not in $I$ and $w|_{I}$ to be the word obtained from $w$ by deleting all letters not in $I$. 
We use $[k]$ to denote the interval $\left\{1,2,...,k\right\}$. 
The following simple lemma will be useful. 

\begin{lem}\label{intervaltableau}
If $\Tab(w) = T$, then $\Tab(w)|_{[k]} = \Tab(w|_{[k]}) = T|_{[k]}$.
\end{lem}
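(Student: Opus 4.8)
The triple equality is really a single nontrivial identity: since $T=\Tab(w)$, the equation $\Tab(w)|_{[k]}=T|_{[k]}$ holds by definition, so the content is $\Tab(w|_{[k]})=\Tab(w)|_{[k]}$. The plan is to prove this by induction on the length of $w$, the empty word being trivial. Write $w=u\,x$ with $x$ the last letter, so that $\Tab(w)=\Tab(u)\insert x$, and assume the inductive hypothesis $\Tab(u|_{[k]})=\Tab(u)|_{[k]}$. Everything then reduces to the following single-letter commutation property for an arbitrary increasing shifted tableau $S$: \textbf{(a)} if $x>k$ then $(S\insert x)|_{[k]}=S|_{[k]}$; and \textbf{(b)} if $x\le k$ then $(S\insert x)|_{[k]}=(S|_{[k]})\insert x$. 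Granting these, I split on whether $x\le k$. If $x>k$ then $w|_{[k]}=u|_{[k]}$ and, by (a) and the inductive hypothesis, $\Tab(w)|_{[k]}=(\Tab(u)\insert x)|_{[k]}=\Tab(u)|_{[k]}=\Tab(u|_{[k]})=\Tab(w|_{[k]})$. If $x\le k$ then $w|_{[k]}=(u|_{[k]})\,x$ and, by (b), the inductive hypothesis, and the definition of $\Tab$, $\Tab(w)|_{[k]}=(\Tab(u)\insert x)|_{[k]}=(\Tab(u)|_{[k]})\insert x=\Tab(u|_{[k]})\insert x=\Tab(w|_{[k]})$.

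To prove (a) I would track the values travelling along the insertion path. Each application of rule (3) or (4) outputs the entry $y$ chosen to be strictly larger than the value currently being inserted, so the values along the path strictly increase; starting from $x>k$, every value on the path is therefore $>k$. Consequently every box created by rule (1) receives a value $>k$, and every box altered by rule (3) both held and receives a value $>k$, while rules (2) and (4) create or alter nothing. No box with entry $\le k$ is ever created, moved, or relabeled, whence $(S\insert x)|_{[k]}=S|_{[k]}$.

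The substance of the lemma, and the step I expect to be the main obstacle, is (b). Here I would compare the insertion path of $x$ in $S$ step by step with that of $x$ in $S|_{[k]}$, using that the entries $\le k$ of an increasing shifted tableau occupy a prefix of each row and of each column, so that $S|_{[k]}$ is again a shifted tableau. The claim to establish is that so long as the value $v$ being inserted is $\le k$, the two paths make identical choices and alter boxes identically. When the smallest entry of the current row (resp. column) of $S$ exceeding $v$ is itself some $y\le k$, it coincides with the corresponding entry of $S|_{[k]}$; the choice between rules (3) and (4) depends only on whether replacing $y$ by $v$ still exceeds the left and upper neighbors, and these neighbors are strictly smaller than $y\le k$, hence lie in $[k]$ and survive restriction. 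Thus both paths bump the same cell, overwrite it (or not) in the same way, and continue with the same output $\le k$ into the same next row or column, the diagonal test depending only on the position of $y$, which is common to both tableaux.

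The delicate point is the transition in (b): when every $\le k$ entry of the current row (resp. column) is $\le v$ but $S$ still has a larger entry, the path in $S$ bumps the first such entry $y>k$, whereas the path in $S|_{[k]}$ tries to append $v$ at exactly the cell $y$ occupies. I would verify that appending $v$ there produces an increasing tableau in $S|_{[k]}$ precisely when replacing $y$ by $v$ produces one in $S$: both conditions reduce to $v$ strictly exceeding the common $\le k$ left and upper neighbors, with the shifted-shape addability of the cell in $S|_{[k]}$ being equivalent to the presence of that upper neighbor. Hence rules (1)/(3) and (2)/(4) are selected in tandem and leave identical $[k]$-entries, after which the inserted value in $S$ has become $>k$ and, by the argument of (a), the rest of the insertion leaves $S|_{[k]}$ untouched. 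Columns are handled symmetrically; the only places genuinely demanding care are these shifted-shape addability conditions and the row-to-column switch on the main diagonal, which is why I regard the transition analysis as the crux of the proof.
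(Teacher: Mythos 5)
Your argument is correct and rests on the same observation as the paper's own proof, namely that entries greater than $k$ never affect the placement or number of entries in $[k]$ during insertion. The paper disposes of the lemma in a single sentence, so your induction on the length of $w$ together with the single-letter commutation claims (a) and (b) --- including the analysis of the transition step where the bumped value first exceeds $k$ --- is simply a fully worked-out version of what the paper records as an observation.
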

\begin{proof}
Observe from from the insertion rules that letters greater than $k$ never affect the placement or number of letters in $\{1,2,\ldots,k\}$.  
\end{proof}

\subsection{Recording tableaux} \label{svst}

In order to describe the recording tableau for shifted Hecke insertion of a word $w$, we need the following definition. 

\begin{defn}\cite{ikeda2013k, PPshifted}\label{def:setvaluedshifted} A \textit{set-valued shifted tableau} is defined to be a filling of the boxes of a shifted shape with finite, nonempty subsets of primed and unprimed positive integers with ordering $1'<1<2'<2<\ldots$ such that:

\begin{enumerate}
\item[1)] The smallest number in each box is greater than or equal to the largest number in the box directly to the left of it, if that box exists.
\item[2)] The smallest number in each box is greater than or equal to the largest number in the box directly above it, if that box exists.
\item[3)] There are no primed entries on the main diagonal.
\item[4)] Each unprimed integer appears in at most one box in each column.
\item[5)] Each primed integer appears in at most one box in each row. 
\end{enumerate}
\end{defn}

A set-valued shifted tableau is called \textit{standard} if the set of labels is exactly $[n]$ for some $n$, each appearing either primed or unprimed exactly once. 
For example, the tableaux below are set-valued shifted tableaux. The tableau on the right is standard.

\begin{center}
\ytableausetup{boxsize=.7cm}
\begin{ytableau}
1 & 2 3\pr & 6\pr 6 \\
\none & 4 & 8\pr9
\end{ytableau}\hspace{1in}
\begin{ytableau}
1 & 2 & 3\pr 4\pr & 6\pr \\
\none & 5\\
\end{ytableau} 
\end{center}

The \textit{recording tableau} of a word $w=w_1w_2\ldots w_n$, denoted  $\rTab(w)$, is a standard set-valued shifted tableau that records where the insertion of each letter of $w$ terminates. 
We define it inductively. 

Start with $\rTab(\emptyset) = \emptyset$. 
If the insertion of $w_k$ added a new box to $\Tab(w_1w_2\ldots w_{k-1})$, then add the same box  with label $k$ ($k\pr$ if this box was added by column insertion) to $\rTab(w_1w_2\ldots w_{k-1})$. 
If $w_k$ did not change the shape of $\Tab(w_1w_2\ldots w_{k-1})$, we obtain $\rTab(w_1w_2\ldots w_k)$ from $\rTab(w_1\ldots w_{k-1})$ by adding the label $k$ ($k\pr$ if it ended with column insertion) to the box where the insertion terminated. 
If insertion terminated when a letter failed to insert into an empty row, label the box where the insertion terminated $k\pr$.

\begin{ex}\label{ex:word}
Let $w = 451132$.  
We insert $w$ letter by letter, writing the insertion tableau at each step in the top row and the recording tableau at each step in the bottom row.  \\

\begin{center}
\ytableausetup{boxsize=.6cm}
\begin{ytableau}
4
\end{ytableau} \hspace{.15in}
\begin{ytableau}
4 & 5 
\end{ytableau} \hspace{.15in}
\begin{ytableau}
1 & 4 & 5 
\end{ytableau} \hspace{.15in}
\begin{ytableau}
1 & 4 & 5 
\end{ytableau} \hspace{.15in}
\begin{ytableau}
1 & 3 & 5 \\
\none & 4
\end{ytableau} \hspace{.15in}
\begin{ytableau}
1 & 2 & 4 & 5 \\
\none & 3
\end{ytableau}
$=\Tab(w)$ 
\end{center}

 \begin{center}
\begin{ytableau}
1
\end{ytableau} \hspace{.15in}
\begin{ytableau}
1 & 2 
\end{ytableau} \hspace{.15in}
\begin{ytableau}
1 & 2 & 3\pr 
\end{ytableau} \hspace{.15in}
\begin{ytableau}
1 & 2 & 3\pr 4\pr 
\end{ytableau} \hspace{.15in}
\begin{ytableau}
1 & 2 & 3\pr 4\pr \\
\none & 5 
\end{ytableau} \hspace{.15in}
\begin{ytableau}
1 & 2 & 3\pr 4\pr & 6\pr \\
\none & 5
\end{ytableau}
$=\rTab(w)$ 
\end{center}
\end{ex}

In \cite{PPshifted}, Patrias and Pylyavskyy define a reverse insertion procedure so that one can recover a word $w$ for each pair $(P,Q)$ consisting of an increasing shifted tableaux and a standard set-valued shifted tableau of the same shape.
See \cite{PPshifted} for details on reverse shifted Hecke insertion.  
This procedure gives the following result:  

\begin{thm} \emph{\cite[Theorem~5.19]{PPshifted}}
\label{wordbijection}
The map $w \mapsto (\Tab(w),\rTab(w))$ is a bijection between words of positive integers and pairs of shifted tableaux $(P,Q)$ of the same shape where $P$ is an increasing shifted tableau and $Q$ is a standard set-valued shifted tableau.
\end{thm}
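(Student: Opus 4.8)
The plan is to exhibit an explicit two-sided inverse to the map $w \mapsto (\Tab(w),\rTab(w))$ via a reverse insertion procedure, following the approach sketched in \cite{PPshifted}. Given a pair $(P,Q)$ with $P$ increasing shifted and $Q$ standard set-valued shifted of the same shape, I would first read off the largest label of $Q$ --- say it is $n$ or $n\pr$ --- to locate the box $b$ where the final insertion of the (unknown) word terminated, together with the information of whether that final step ended in a row or a column (recorded by whether the label is unprimed or primed). I would then define a single reverse insertion step that, starting from box $b$, undoes one application of the insertion rules (1)--(4), bumping a letter up and to the left, and returns an output letter $x = w_n$ together with the tableau $\Tab(w_1\cdots w_{n-1})$. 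Deleting the label $n$ (or $n\pr$) from $Q$ yields a standard set-valued shifted tableau $Q'$ with label set $[n-1]$, and by induction on $n$ one recovers the remaining letters.

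The key steps, in order, are: (i) verify that reverse insertion of a single letter is well-defined and deterministic --- each of the forward rules (1)--(4) is invertible, and the datum of the terminal box together with its prime/unprime status selects exactly which rule was applied and from which box the reverse bump originates; (ii) check that each reverse step preserves the increasing property, so that the intermediate objects are genuine increasing shifted tableaux; (iii) prove that a forward insertion followed by the matching reverse step is the identity, and conversely, so that the two single-letter operations are mutually inverse; and (iv) assemble these into the full bijection by induction on word length, using that the largest label of $\rTab(w)$ always marks the termination box of the last letter inserted and that deleting it returns $\rTab(w_1\cdots w_{n-1})$.

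The main obstacle is the case analysis in steps (i)--(ii), where the shifted setting introduces complications absent in the unshifted case. In particular, one must carefully reverse the shape-nonchanging insertions governed by rules (2) and (4): when the largest label $n$ shares its box with smaller labels, the final insertion did not add a box, so the reverse step must recover the output letter without deleting any box, whereas when $n$ occupies its box alone in a corner the box is removed. The genuinely shifted difficulties are (a) the transition between row and column insertion triggered when a bumped entry lands on the main diagonal, which the prime decoration on $Q$ is designed to track, and (b) the ``failed insertion into an empty row'' convention, recorded by a primed label on a box that was \emph{not} newly created; reverse insertion must detect this configuration and emit the correct letter. Establishing that the prime/unprime bookkeeping is exactly sufficient to disambiguate all of these cases --- and that no information is lost --- is the heart of the argument. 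Since this verification is carried out in detail in \cite{PPshifted}, I would either reproduce their reverse insertion rules and confirm invertibility case by case, or simply invoke \cite[Theorem~5.19]{PPshifted}.
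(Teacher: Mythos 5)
Your proposal matches the paper's treatment: the paper does not prove this theorem itself but cites \cite[Theorem~5.19]{PPshifted}, noting only that Patrias and Pylyavskyy define a reverse insertion procedure recovering $w$ from $(P,Q)$, which is exactly the inverse-construction strategy you outline. Your sketch of the reverse step (using the largest label of $Q$ and its prime status to locate and disambiguate the last insertion) is the correct shape of the argument carried out in that reference, and deferring the case analysis to \cite{PPshifted} is what the paper itself does.
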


\subsection{Weak $K$-Knuth equivalence}
The Knuth relations determine which words have the same Robinson-Schensted-Knuth insertion tableau \cite[Theorem A1.1.4]{opac-b1096212}. 
We present Buch and Samuel's shifted $K$-theoretic analogue, called weak $K$-Knuth equivalence~\cite{BuchandSamuel2013}. 
As we will see in Theorem~\ref{insertionimplieskknuth} and Example~\ref{notURT}, weak $K$-Knuth equivalence is a necessary but not sufficient condition for two words to have the same shifted Hecke insertion tableau. 

\begin{defn}\cite[Definition~7.6]{BuchandSamuel2013}
\label{def:wkknuth} Define the \textit{weak $K$-Knuth equivalence relation} on the alphabet \{1,2,3,$\cdots$\}, denoted by $\ksw$, as the symmetric transitive closure of the following relations, where \textbf{u} and \textbf{v} are (possibly empty) words of positive integers, and  $a < b < c$ are distinct positive integers:  
\begin{enumerate}
\item (\textbf{u},$a,a$,\textbf{v}) $\ksw$ (\textbf{u},$a$,\textbf{v}),
\item (\textbf{u},$a,b,a$,\textbf{v}) $\ksw$ (\textbf{u},$b,a,b$,\textbf{v}),
\item (\textbf{u},$b,a,c$,\textbf{v}) $\ksw$ (\textbf{u},$b,c,a$,\textbf{v}),
\item (\textbf{u},$a,c,b$,\textbf{v}) $\ksw$ (\textbf{u},$c,a,b$,\textbf{v}),
\item ($a,b$,\textbf{u}) $\ksw$ ($b,a$,\textbf{u}).
\end{enumerate}
\end{defn}

Two words $w$ and $w'$ are \emph{weak $K$-Knuth equivalent}, denoted $w\ \ksw \ w'$, if $w'$ can be obtained from $w$ by a finite sequence of weak $K$-Knuth equivalence relations.
Two shifted increasing tableaux $T$ and $T'$ are weak $K$-Knuth equivalent if $\row(T)$ $\ksw$ $\row(T')$.
For example, $1243$ $\ksw$ $442143$ since
\[
1243\text{ }\ksw\text{ } 2143\text{ } \ksw \text{ }21143\text{ } \ksw \text{ }21413 \text{ }\ksw\text{ } 24143 \text{ }\ksw\text{ } 42143\text{ } \ksw \text{ }442143.
\]

By removing relation (5), we obtain the $K$-Knuth relations of Buch and Samuel~\cite{BuchandSamuel2013}.
As in the $K$-Knuth equivalence but in contrast to Knuth equivalence, each weak $K$-Knuth equivalence class has infinitely many elements and contains words of arbitrary length. 

We will need the following lemma, which follows easily from the weak $K$-Knuth relations. 

\begin{lem}\label{interval}
If $w$ $\ksw$ $w'$, then $w\arrowvert_{I}$ $\ksw$ $w'\arrowvert_{I}$ for any interval $I$.
\end{lem}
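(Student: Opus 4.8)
The plan is to show that restriction to an interval $I$ commutes with each of the five generating relations of Definition~\ref{def:wkknuth}, and then extend to arbitrary weak $K$-Knuth equivalences by the fact that $\ksw$ is the symmetric transitive closure of those generators. Since restriction to $I$ is clearly compatible with composition of relations (if $w \ksw w'$ is witnessed by a chain $w = u_0, u_1, \ldots, u_m = w'$ of single-step equivalences, then the restricted chain $u_0|_I, u_1|_I, \ldots, u_m|_I$ will witness $w|_I \ksw w'|_I$ once each single step is handled), it suffices to treat the base case where $w$ and $w'$ differ by exactly one application of a relation.

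So first I would fix a single relation, say of the form $(\mathbf{u}, x, \mathbf{v}) \ksw (\mathbf{u}, y, \mathbf{v})$ or $(\mathbf{u}, x, \mathbf{v}) \ksw (\mathbf{u}, x', \mathbf{v})$ where $x, x'$ denote the short local patterns appearing in relations (1)--(5), and observe that the prefix $\mathbf{u}$ and suffix $\mathbf{v}$ are untouched. Restricting to $I$ deletes all letters outside $I$; since deletion acts letterwise and independently on $\mathbf{u}$, on the local pattern, and on $\mathbf{v}$, we have $w|_I = \mathbf{u}|_I \cdot (\text{pattern})|_I \cdot \mathbf{v}|_I$ and likewise for $w'$. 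Because $\mathbf{u}|_I = \mathbf{u}|_I$ and $\mathbf{v}|_I = \mathbf{v}|_I$ are common to both words, the entire question reduces to checking that the restricted local patterns are weak $K$-Knuth equivalent.

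The core of the argument is then a short finite case analysis on which of the one, two, or three distinguished letters of the local pattern survive restriction to $I$. For each of the five relations I would enumerate the possibilities according to the subset of $\{a\}$, $\{a,b\}$, or $\{a,b,c\}$ lying in $I$. If none of the distinguished letters lie in $I$, both restricted patterns are empty and the two words restrict to literally the same word. If some but not all survive, one checks the restricted patterns are either identical or again related by one of relations (1)--(5): for instance, in relation (3), $(b,a,c)|_I$ and $(b,c,a)|_I$ become identical unless all of $a,b,c \in I$, in which case they are exactly an instance of relation (3) itself; when only two survive, the two restricted words coincide or are an instance of relation (5). The analogous bookkeeping handles relations (1), (2), (4), and (5).

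The main obstacle, though a mild one, is simply the completeness of the case analysis: one must make sure that in every surviving subcase the restricted patterns really are connected by the allowed generators (and in particular that relation (5), the commutation at the front of a word, is available to repair the cases where restriction destroys the letter that separated two others). I expect no genuine difficulty, only the need to be careful that the relations are symmetric and that intervening deleted letters never force a pattern that falls outside relations (1)--(5); the phrase ``follows easily'' in the statement reflects exactly that the only work is this routine but complete enumeration.
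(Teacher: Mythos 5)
Your proposal is correct and is exactly the routine verification the paper omits when it says the lemma ``follows easily'': reduce to a single application of a generating relation and check that the restricted local patterns are equal or again related by one of relations (1)--(5). The one point worth making explicit in your case analysis is that the interval hypothesis on $I$ is essential: for relations (3) and (4), since $a<b<c$, an interval cannot contain $a$ and $c$ while omitting $b$, and this excluded case (e.g.\ $(b,a,c)|_{\{a,c\}}=(a,c)$ versus $(b,c,a)|_{\{a,c\}}=(c,a)$ in the middle of a word) is precisely where the statement would otherwise fail.
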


\subsection{Shifted $K$-theoretic jeu de taquin} 

We next describe the shifted $K$-theoretic jeu de taquin algorithm or \textit{shifted $K$-jdt} introduced by Clifford, Thomas, and Yong in~\cite{clifford2014k}. From this point on, ``jeu de taquin'' will refer to shifted $K$-jdt.

\begin{defn} Given a shifted skew shape $\lambda/\mu$ with $\mu\subset\lambda$ strict partitions, we say that two boxes in $\lambda/\mu$ are \textit{adjacent} if they share a common edge. 
We call a box in $\lambda/\mu$ \textit{maximal} if it has no adjacent boxes to the east or south and \textit{minimal} if it has no adjacent boxes to the west or north.
\end{defn}

For the following definition, let $T(\alpha)$ denote the label of box $\alpha$ in tableau $T$.

\begin{defn}\cite[Section 4]{ThomasYong} We define the action of the $K$-theoretic switch operator $(i, j)$ on a tableau $T$ as follows:
\[
((i,j)T)(\alpha) = 
\begin{cases}
 j &\text{if } T(\alpha)=i \text{ and } T(\beta)=j 
 \text{ for some box }\beta \text{ adjacent to } \alpha; \\ 
 i &\text{if } T(\alpha)=j \text{ and } T(\beta)=i \text{ for some box }\beta\text{ adjacent to } \alpha; \\ 
 T(\alpha) &\text{otherwise}.
\end{cases}
\]
In other words, we swap the labels of boxes labeled by $i$ and the adjacent box (or boxes) labeled with entry $j$.  If no box labeled $j$ is adjacent to a box labeled $i$, we do nothing.
\end{defn}

For the definitions to follow, we will allow boxes to be labeled with the symbol $\circ$ in addition to labels in $\mathbb{N}$. 
For example,
\[\begin{ytableau}
\none & \none & \circ \\
\none & \circ & 2 \\
\circ & 2 & 3
\end{ytableau} 
\quad\underrightarrow{(2,\circ)}\quad
\begin{ytableau}
\none & \none & 2 \\
\none & 2 & \circ \\
2 & \circ & 3
\end{ytableau}\hspace{1in}
\begin{ytableau}
\none & \none & \circ \\
\none & \circ & 2 \\
\circ & 2 & 3
\end{ytableau} 
\quad\underrightarrow{(3,\circ)}\quad
\begin{ytableau}
\none & \none & \circ \\
\none & \circ & 2 \\
\circ & 2 & 3
\end{ytableau} 
\]
The next step in defining shifted $K$-jdt is to define a shifted $K$-jdt slide.
Let $\Lambda$ denote the union of all shifted shapes $\lambda$, that is, $\Lambda=\cup \lambda$. 

\begin{defn}\label{def:slide}
Let $T$ be an increasing tableau of shifted skew shape $\lambda/\mu$ with values in the interval $[a,b]\subset \mathbb{N}$ and $C$ be a subset of the maximal boxes of $\mu$.
Label each box in C with $\circ$. 
The \textit{forward slide} $kjdt_{C}(T)$ of $T$ starting from $C$,  is 
\[
kjdt_{C}(T) = (b,\circ)(b-1,\circ)\ldots(a+1,\circ)(a,\circ)(T)
\]
Similarly, if $\widehat{C}$ is a subset of the minimal boxes of $\Lambda/\lambda$ labeled by $\circ$, the \textit{reverse slide} $\widehat{kjdt}_{\widehat{C}}(T)$ of $T$ starting from $\widehat{C}$ is
\[
\widehat{kjdt}_{\widehat{C}}(T) = (a,\circ)(a+1,\circ)\ldots(b-1,\circ)(b,\circ)(T).
\]
\end{defn}
For example, given $C = \{(1,2),(2,1)\}$ and $\hat{C} = \{(2,4),(3,3), (4,2), (4,1)\}$, we see forward and reverse slides of a tableau $T$ are
\[
\ytableausetup{boxsize=0.45cm}
\begin{ytableau}
 \none & \circ & 1  \\
 \circ  & 2 & 3 \\
 4 & 5 \\
\end{ytableau}\rightarrow
\begin{ytableau}
 \none & 1 & \circ  \\
 \circ  & 2 & 3 \\
 4 & 5 \\
\end{ytableau}\rightarrow
\begin{ytableau}
 \none & 1 & \circ  \\
 2 & \circ & 3 \\
 4 & 5 \\
\end{ytableau}
\rightarrow
\begin{ytableau}
 \none & 1 & 3 \\
 2 & 3 & \circ \\
 4 & 5 \\
\end{ytableau}\rightarrow
\begin{ytableau}
 \none & 1 & 3 \\
 2 & 3 & \circ \\
 4 & 5 \\
\end{ytableau}\rightarrow
\begin{ytableau}
 \none & 1 & 3 \\
 2 & 3 & \circ \\
 4 & 5 \\
\end{ytableau}
\]

\[
\begin{ytableau}
 \none & \none & 1 & \circ\\
\none &  2 & 3 & \circ\\
 4 & 5 & \circ\\
\none & \circ
\end{ytableau}\rightarrow
\begin{ytableau}
 \none & \none & 1 & \circ\\
\none &   2 & 3 & \circ\\
 4 & \circ & 5\\
 \none & 5
\end{ytableau}\rightarrow
\begin{ytableau}
 \none & \none & 1 & \circ\\
\none &  2 & 3 & \circ\\
 \circ & 4 & 5\\
 \none & 5
\end{ytableau}
\rightarrow
\begin{ytableau}
\none &  \none &1 & \circ\\
\none   & 2 & \circ & 3\\
 \circ & 4 & 5\\
 \none & 5
\end{ytableau}\rightarrow
\begin{ytableau}
\none  & \none & 1 & \circ\\
\none   & \circ & 2 & 3\\
 \circ & 4 & 5\\
\none & 5
\end{ytableau}\rightarrow
\begin{ytableau}
 \none & \none  & \circ & 1\\
  \none & \circ & 2 & 3\\
 \circ & 4 & 5\\
\none & 5
\end{ytableau}
\]
Observe that forward slides can transform a skew shape into a straight shape, while reverse slides can do the opposite. 

\begin{defn}
A \emph{shifted $K$-rectification} of an increasing shifted skew tableau $T$ is any shifted tableau that can be obtained from $T$ by a series of forward slides. 
\end{defn}

In~\cite{clifford2014k}, Clifford, Thomas, and Yong show that the shifted $K$-rectification of an increasing shifted tableau will be an increasing shifted tableau.
However, this tableau need not be unique; different choices of boxes labeled $\circ$ for each slide, called the \textit{rectification order}, may lead to different rectifications. 
For this reason, when studying shifted $K$-rectification, it is necessary to specify the rectification order. 
We will do so by marking boxes with an underlined version of a positive integer as in~\cite{ThomasYong}. 
The underlined boxes are labeled in reverse chronological order so that the subset $C$ for the last shifted $K$-jdt slide is indicated by boxes labeled $\underline{1}$, the second to last by $\underline{2}$, and so forth. 
See Example~\ref{ex:Krect} for an example of shifted $K$-rectification.

Now, we describe a particular rectification order for a shifted skew tableau $T$.
We will use it to establish the link between shifted Hecke insertion and the weak $K$-Knuth equivalence. 
As a preliminary step, we define the \textit{superstandard tableau} of shifted shape $\lambda=(\lambda_1,\lambda_2,\ldots,\lambda_k)$ to be the tableau with first row labeled by $1,2,\ldots,\lambda_1$, second row by $\lambda_1+1,\lambda_1+2,\ldots, \lambda_2$, and so on. 
For example,
\begin{center}
\begin{ytableau}
1 & 2 & 3 & 4 \\
\none & 5 & 6 \\
\none & \none & 7
\end{ytableau}
\end{center}
is the superstandard tableau of shape $(4,2,1)$.

For an increasing shifted skew tableau $T$ of shape $\lambda/\mu$, we define the \emph{superstandard rectification order} by filling the shape $\mu$ so the resulting tableau is superstandard with entries $[\underline{1},\underline{p}]$ where $p=|\mu|$.
The resulting tableau after applying shifted $K$-rectification is the \textit{superstandard $K$-rectification}. 
Superstandard rectification can be decomposed into the sequence of switch operations	
\[
(\underline{p},1),(\underline{p},2),\ldots,(\underline{p},q),(\underline{p-1},1),\ldots,(\underline{p-1},q),\ldots,(\underline{1},1),\ldots,(\underline{1},q)
\]
performed from left to right.
Here, $q$ is the largest entry of $T$.
This sequence of pairs is the \textit{standard switch sequence}.

\begin{ex}\label{ex:Krect} Given a skew increasing tableau
\ytableausetup{mathmode, boxsize=1.2em}
\begin{ytableau}
\none & \none & 1 \\
\none & 2 & 3 \\
1 & 4 \\
\end{ytableau}, we obtain the shifted superstandard $K$-rectification as follows:

\[\begin{array}{l r}
\ytableausetup{mathmode, boxsize=1.2em}

\begin{ytableau}
\underline{1} & \underline{2} & \underline{3} & \underline{4} & 1 \\
\none & \underline{5} & \underline{6} & 2 & 3\\
\none & \none & 1 & 4\\
\end{ytableau} 

\rightarrow

\begin{ytableau}
\underline{1} & \underline{2} & \underline{3} & \underline{4} & 1 \\
\none & \underline{5} & 1 & 2 & 3\\
\none & \none & 4 & \underline{6}\\
\end{ytableau}

\rightarrow

\begin{ytableau}
\underline{1} & \underline{2} & \underline{3} & \underline{4} & 1 \\
\none & 1 & 2 & 3 & \underline{5}\\
\none & \none & 4 & \underline{6}\\
\end{ytableau}

\rightarrow

\begin{ytableau}
\underline{1} & \underline{2} & \underline{3} & 1 & \underline{4} \\
\none & 1 & 2 & 3 & \underline{5}\\
\none & \none & 4 & \underline{6}\\
\end{ytableau}

\rightarrow

\begin{ytableau}
\underline{1} & \underline{2} & 1 & 3 & \underline{4} \\
\none & 1 & 2 & \underline{3} & \underline{5}\\
\none & \none & 4 & \underline{6}\\
\end{ytableau} 

\rightarrow
\end{array}\]

\[\begin{array}{l r}
\ytableausetup{mathmode, boxsize=1.2em}

\begin{ytableau}
\underline{1} & 1 & \underline{2} & 3 & \underline{4} \\
\none & \underline{2} & 2 & \underline{3} & \underline{5}\\
\none & \none & 4 & \underline{6}\\
\end{ytableau} 

\rightarrow

\begin{ytableau}
\underline{1} & 1 & 2 & 3 & \underline{4} \\
\none & 2 & \underline{2} & \underline{3} & \underline{5}\\
\none & \none & 4 & \underline{6}\\
\end{ytableau}

\rightarrow

\begin{ytableau}
\underline{1} & 1 & 2 & 3 & \underline{4} \\
\none & 2 & 4 & \underline{3} & \underline{5}\\
\none & \none & \underline{2} & \underline{6}\\
\end{ytableau}

\rightarrow

\begin{ytableau}
1 & 2 & \underline{1} & 3 & \underline{4} \\
\none & \underline{1} & 4 & \underline{3} & \underline{5}\\
\none & \none & \underline{2} & \underline{6}\\
\end{ytableau}

\rightarrow

\begin{ytableau}
1 & 2 & 3 & \underline{1} & \underline{4} \\
\none & 4 & \underline{1} & \underline{3} & \underline{5}\\
\none & \none & \underline{2} & \underline{6}\\
\end{ytableau}
\end{array}\]
The sequence of nontrivial switch operators used in this shifted $K$-rectification is $$(\underline{6},1),(\underline{6},4),(\underline{5},1),(\underline{5},2), (\underline{5},3),(\underline{4},1),(\underline{3},1),(\underline{3},3),(\underline{2},1),(\underline{2},2),(\underline{2},4),(\underline{1},1),(\underline{1},2),(\underline{1},3),(\underline{1},4)$$
We see that the superstandard $K$-rectification of $T$ is
\begin{ytableau}
1 & 2 & 3 \\
\none & 4 \\
\end{ytableau}.
\end{ex}

Following Definition~\ref{def:slide}, we perform the switches in the order determined by the standard switch sequence. 
However, there is some flexibility in this ordering. 
\begin{lem}\emph{\cite[Lemma~4.4]{clifford2014k}}\label{switchcommute}
If $i\neq j$ and $r\neq s$ then the switch operators ($\underline{i}, r$) and ($\underline{j}, s$) commute. 
\end{lem}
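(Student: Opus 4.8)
The plan is to show that the two switch operators read and write labels from disjoint alphabets, so neither can interfere with the other's action. First I would record the structural feature of the standard switch sequence: every operator in it pairs an underlined entry with a non-underlined (ordinary) one, so both $(\underline{i},r)$ and $(\underline{j},s)$ have this form, with $\underline{i},\underline{j}$ underlined and $r,s$ ordinary positive integers. Since underlined and ordinary labels lie in disjoint alphabets, the hypotheses $i\neq j$ and $r\neq s$ upgrade to the statement that the four symbols $\underline{i}$, $r$, $\underline{j}$, $s$ are pairwise distinct; in particular $\{\underline{i},r\}\cap\{\underline{j},s\}=\emptyset$.

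Next I would exploit the defining property of the $K$-theoretic switch operator $(x,y)$: by its definition it changes the label of a box $\alpha$ only when $\alpha$ is currently labeled $x$ (resp.\ $y$) and has a neighbor labeled $y$ (resp.\ $x$), and in that case the new label is again one of $x,y$. Consequently $(x,y)$ fixes every box whose label lies outside $\{x,y\}$, and it neither creates nor destroys any label outside $\{x,y\}$. Applying this to $(\underline{j},s)$: since $\{\underline{j},s\}$ is disjoint from $\{\underline{i},r\}$, this operator leaves untouched every box labeled $\underline{i}$ or $r$ and does not alter which boxes carry these labels. Therefore the entire configuration that $(\underline{i},r)$ reads---the positions of the $\underline{i}$'s and $r$'s together with their adjacencies---is identical before and after $(\underline{j},s)$, so $(\underline{i},r)$ performs exactly the same relabelings in either order, and the symmetric statement holds for $(\underline{j},s)$.

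Finally I would verify box by box that the two composites agree. Every box falls into one of three classes according to whether its initial label lies in $\{\underline{i},r\}$, in $\{\underline{j},s\}$, or in neither. Boxes of the first class are modified only by $(\underline{i},r)$, and that modification is order-independent by the previous paragraph; boxes of the second class only by $(\underline{j},s)$; boxes of the third by neither. Since each operator writes only labels inside its own pair, these classes are preserved, and the two families of modifications occur on disjoint boxes. Hence $(\underline{i},r)(\underline{j},s)(T)$ and $(\underline{j},s)(\underline{i},r)(T)$ assign the same label to every box, proving that the operators commute.

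The main obstacle---and the only point requiring genuine care---is the $K$-theoretic subtlety that a single box may be swapped with several neighbors at once, so a switch operator is not a plain transposition of two boxes. This is exactly why I emphasize that $(x,y)$ both reads and writes only within the set $\{x,y\}$: this closure property is what prevents one operator from disturbing the adjacency data governing the other, and it is the hypothesis-driven disjointness $\{\underline{i},r\}\cap\{\underline{j},s\}=\emptyset$ that makes this closure decouple the two operators completely.
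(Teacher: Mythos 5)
Your argument is correct: the observation that a switch operator $(x,y)$ reads and writes only labels in $\{x,y\}$, combined with the disjointness of the underlined and ordinary alphabets, does make the four symbols pairwise distinct and decouples the two operators completely. The paper itself states this lemma without proof, citing \cite[Lemma~4.4]{clifford2014k}, and your argument is essentially the standard one given there, so there is nothing to add.
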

This motivates the following definition.
\begin{defn}\cite[Section 4]{ThomasYong}
A \textit{viable switch sequence} is a sequence of switch operators, with the following properties:

\begin{enumerate}
\item every switch ($\underline{i}, j$) occurs exactly once, for $1 \leq i \leq p$ and $1 \leq j \leq q$; 
\item for any $1 \leq i \leq p$, the pairs ($\underline{i},1$), $\ldots$, ($\underline{i}, q$) occur in  that relative order;
\item for any $1 \leq j \leq q$, the pairs ($\underline{p},j$), $\ldots$, ($\underline{1}, j$) occur in  that relative order.\\
\end{enumerate}
\end{defn}

\begin{cor}
\label{switchsequence} 
Any viable \text{switch} sequence can be used to calculate shifted K-rectification. 
\end{cor}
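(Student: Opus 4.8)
The plan is to regard a viable switch sequence as a total ordering of the $pq$ switch operators $(\underline i,j)$, $1\le i\le p$, $1\le j\le q$, and to show that any two such orderings are connected by a chain of transpositions of \emph{adjacent commuting} operators; Lemma~\ref{switchcommute} will then force every viable switch sequence to produce the same tableau. Since the standard switch sequence $(\underline p,1),\ldots,(\underline p,q),(\underline{p-1},1),\ldots,(\underline 1,q)$ is plainly viable and is by definition the sequence computing superstandard $K$-rectification, it suffices to prove that an arbitrary viable switch sequence $S$ yields the same result as this distinguished sequence $S_0$.

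The heart of the argument is the following observation. Order the operators totally according to $S_0$, so that $(\underline i,r)$ precedes $(\underline j,s)$ in $S_0$ exactly when $i>j$, or when $i=j$ and $r<s$. I claim that if two operators $X=(\underline i,r)$ and $Y=(\underline j,s)$ occupy adjacent positions of a viable sequence $S$, with $X$ immediately before $Y$, yet $Y$ precedes $X$ in $S_0$, then necessarily $i\neq j$ and $r\neq s$. I would argue this by cases. If $i=j$, then $Y$ before $X$ in $S_0$ forces $s<r$, but viability condition~(2) then demands that $(\underline i,s)$ precede $(\underline i,r)$, contradicting the order in $S$. If instead $r=s$, then $Y$ before $X$ in $S_0$ forces $j>i$, and viability condition~(3) demands that $(\underline j,r)$ precede $(\underline i,r)$, again contradicting $S$. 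Hence both indices differ, so by Lemma~\ref{switchcommute} the pair $X,Y$ commutes; moreover, because $i\neq j$ and $r\neq s$, transposing $X$ and $Y$ alters no relative order among operators sharing a first index or sharing a second index, and so leaves $S$ viable.

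With this claim in hand I would finish by a bubble sort. As long as $S\neq S_0$, there is some pair of adjacent entries of $S$ appearing in the opposite order to $S_0$; by the claim this pair commutes, so swapping it strictly decreases the number of inversions of $S$ relative to $S_0$ while preserving both viability and, by Lemma~\ref{switchcommute}, the resulting tableau. After finitely many such swaps $S$ is transformed into $S_0$, and since no swap changed the output, $S$ and $S_0$ compute the same $K$-rectification. This is precisely the assertion of the corollary.

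I expect the main obstacle to be the case analysis in the central claim: one must match viability conditions~(2) and~(3) to the same-first-index and same-second-index constraints precisely enough to rule out the possibility that an adjacent $S_0$-inversion involves two operators sharing a first index or sharing a second index. Once that possibility is excluded, the commutativity supplied by Lemma~\ref{switchcommute} applies directly, and the bubble-sort wrapper that converts $S$ into $S_0$ is routine.
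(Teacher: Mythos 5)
Your proposal is correct and is exactly the argument the paper has in mind: its proof of Corollary~\ref{switchsequence} simply asserts that any viable switch sequence can be obtained from the standard one by repeated applications of Lemma~\ref{switchcommute}, and your bubble-sort with the case analysis (viability conditions (2) and (3) ruling out adjacent $S_0$-inversions sharing a first or second index) supplies the details the paper calls ``straightforward.''
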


\begin{proof} It is straightforward to show that any viable switch sequence can be obtained from the standard switch sequence by repeated applications of Lemma \ref{switchcommute}. 
\end{proof}

For any word $w=w_1\ldots w_n$, let $T_w$ denote the shifted skew tableau consisting of $n$ boxes on the antidiagonal with reading word $w$. 
The next theorem explains the relationship between shifted Hecke insertion and shifted $K$-jdt, affirming that shifted Hecke insertion is the correct $K$-theoretic analogue of Sagan-Worley insertion.

\begin{thm}
\label{jdtgivesinsertion} 
Let $w$ be a word and $P$ be the shifted superstandard $K$-rectification of the antidiagonal tableau of $T_w$.
Then $P = \Tab(w)$. 
\end{thm}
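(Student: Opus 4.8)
The plan is to prove the statement by induction on the length $n$ of $w$, peeling off the last letter. Write $w = u\,x$ with $u = w_1 \cdots w_{n-1}$ and $x = w_n$; by the inductive hypothesis I may assume that $\Tab(u)$ equals the superstandard $K$-rectification of the antidiagonal tableau $T_u$. The antidiagonal tableau $T_w$ is obtained from $T_u$ by adjoining a single box containing $x$ at the top--right end of the antidiagonal, together with the correspondingly enlarged inner shape $\mu$. The base cases $n \le 1$ are immediate.

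The first step is to factor the rectification of $T_w$ into two successive phases. Using Corollary~\ref{switchsequence}, I would replace the standard switch sequence for $T_w$ by a viable switch sequence chosen so that every switch responsible for rectifying the sub-antidiagonal carrying $u$ is performed before any switch that ultimately transports the box containing $x$. The aim is to show that this first phase reproduces the superstandard $K$-rectification of $T_u$, hence by induction yields $\Tab(u)$ sitting inside the larger shape, with the box containing $x$ left as an isolated cell at an outer corner adjacent to the shape of $\Tab(u)$, from which a single forward slide can bring it in. Verifying that this reordering is genuinely viable, and that the $x$-box is stranded in exactly the right position, is the bookkeeping heart of the step; here Lemma~\ref{switchcommute} is the essential tool, and one must track how the enlargement of $\mu$ interacts with the superstandard labelling.

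The second and decisive step is a single-box insertion lemma: performing the remaining forward $K$-slides, which sweep the isolated box containing $x$ into $\Tab(u)$, yields exactly $\Tab(u) \insert x$. I would prove this by inducting on the rows and columns traversed by the slide and matching the switch-operator dynamics against the four shifted Hecke insertion rules one at a time. The routine cases are rules $(1)$ and $(3)$, where a box is appended or an entry is bumped and the slide path proceeds to the next row. The delicate cases are the $K$-theoretic ones: rules $(2)$ and $(4)$, where adjoining or replacing would destroy increasingness and the tableau is left unchanged, which correspond precisely to a switch operator acting trivially together with the merging phenomenon of $K$-jeu de taquin; and the transition in rules $(3)$ and $(4)$ from row insertion to column insertion, triggered once the bumped entry lies on the main diagonal or once a column insertion has begun. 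Matching this row-to-column transition to the geometry of the slide across the diagonal is, I expect, the main obstacle, since it is exactly where the shifted (as opposed to straight-shape) behavior enters and where the superstandard rectification order is essential rather than incidental.

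As an organizing device and consistency check throughout the case analysis, I would restrict to the alphabet $[k]$: Lemma~\ref{intervaltableau} controls how $\Tab$ behaves under such restriction, Lemma~\ref{interval} does the same for reading words, and the two together let me confirm that the slide dynamics on small letters are unaffected by the larger ones, mirroring the corresponding stability of shifted Hecke insertion.
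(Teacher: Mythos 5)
Your proposal follows essentially the same route as the paper's proof: induction on the length of $w$, using Corollary~\ref{switchsequence} (via Lemma~\ref{switchcommute}) to reorder into a viable switch sequence that first rectifies the prefix to $\Tab(w_1\cdots w_{n-1})$ and then sweeps in the box containing $w_n$, followed by a case-by-case matching of the remaining slides against the four shifted Hecke insertion rules, including the merging cases and the row-to-column transition at the diagonal. The paper carries out exactly this plan, organizing the bookkeeping through the notions of ``normal'' and ``cleared'' rows.
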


\begin{proof} We will closely follow the structure of the proof of \cite[Theorem 4.2]{ThomasYong}.  

We proceed by induction on $n$, the length of the word $w$. 
It is easy to check that the result is true for $n=1$ and $n=2$. 

Let $P$ be the shifted tableau resulting from superstandard $K$-rectification of $T_{w_1\ldots w_{n-1}}$. 
By induction, $P=\Tab(w_1\ldots w_{n-1})$. 
The non-underlined labels in the figure on the right indicate elements of $P*w_n$, and the underlined labels determine the steps remaining to finish the superstandard $K$-rectification of $T_w$. 

\begin{center}
\ytableausetup{mathmode, boxsize=2.5em}
\begin{ytableau}
\large
\sss \underline{1} &\sss \underline{2} &\sss \underline{3} &\sss \cdots &\sss \cdots &\sss \cdots &\sss \underline{t-1} &\sss \underline{t} &\sss w_n \\
\none &\sss \underline{t+1} &\sss \underline{t+2} &\sss \cdots &\sss \cdots & \sss\cdots &\sss \underline{2t-1} &\sss w_{n-1} \\
\none & \none & \sss\ddots &\sss \cdots &\sss \cdots &\sss \iddots &\sss \iddots\\
\none & \none & \none &\sss \ddots & \sss\iddots &\sss w_2\\
\none & \none & \none & \none &\sss w_1\\
\end{ytableau}
$\rightarrow$
\begin{ytableau}
\sss\underline{1} &\sss \underline{2} &\sss \underline{3} &\sss \cdots &\sss \underline{t} &\sss w_n \\
\none &\sss P_{1,1} &\sss P_{1,2} &\sss \cdots &\sss P_{1,t-1} \\
\none & \none & \sss P_{2,2} &\sss \cdots &\sss P_{2,t-2}\\
\none & \none & \none &\sss \ddots &\sss \vdots\\
\none & \none & \none & \none &\sss P_{t-1,t-1}\\
\end{ytableau}
\end{center}

\noindent
It remains to show that the shifted superstandard $K$-rectification on $P*w_n$ yields the same tableau as $P\insert w_n$. 

Note that any viable switch sequence for the underlined entries $\underline{1},\ldots, \underline{t}$, when concatenated with a viable switch sequence for the larger underlined entries, will give a viable switch sequence for $T_w$.
We construct a viable switch sequence for which shifted $K$-rectification coincides with the tableau obtained by insertion $P\insert w_n$. (See Figure~ 1 for an example.)
Our result then follows from Corollary \ref{switchsequence}.

For $i \geq 1$, let $y_i$ be the letter that is to be inserted into row (resp column) $i$ in the shifted Hecke insertion of $w_n$ into $P$, so $y_1=w_n$. 
Our strategy is to begin with an
%
$i$th row is of the form 
\[\ytableausetup
{mathmode, boxsize=2em}
\begin{ytableau}
\large
\underline{t_1} & \underline{t_2} & \cdots & \underline{t_k} & y_i & \underline{t_{k+1}} & \cdots & \underline{t_m}\\
\end{ytableau}\]
and, after several switch operations, construct a new $i$th row of the form  
\[\ytableausetup
{mathmode, boxsize=2em}
\begin{ytableau}
\large
\underline{t_1} & x_1 & \cdots & x_n & y_i & x_{n+1} & \cdots & x_{m-1} \\
\end{ytableau}\ .\]
Here if $x_j$ is underlined, then $x_k$ is underlined for all $k > j$.
In other words, we start with a row that has only one non-underlined entry. 
Such rows are said to be of \emph{normal form}.
We end with a row that begins with an underlined entry and may end with them.
Such rows are said to be \emph{cleared}.
For row bumps, our procedure of switches will turn the $i$th row, which is normal, into a cleared row.
The resulting $i+1$th row will again be normal or filled entirely with underlined entries.
The analogous sequence for column bumps will be a trivial modification (switch up instead of left, left instead of up).



Starting with tableau $P*w_n$, whose first row is of normal form, we now construct a viable switch sequence involving local applications of the cases below to get the superstandard $K$-rectification. 
Suppose after some sequence of switches, we have arrived at a tableau where row $i$ is of normal form and all higher rows are cleared. 
There are four cases for the row bumping step of shifted Hecke insertion.  
We will see the cases for column bumping are identical.

Let $T$ be a tableau with the first $i-1$ rows cleared and the $i$th row normal with entry $y_i$ not underlined.
We use $z_i$ to denote the smallest entry greater than $y_i$ in the $i+1$th row.
Let $k$ be the position of $z_i$ in the $i+1$th row, should $z_i$ exist.
If the $i+2$th row has a $k-1$th and $k$th entries, we denote them $a$ and $b$, respectively.

\noindent \textbf{Case 1:} $y_i$ is greater than or equal to any entry in the $i+1$th row.

Switch $y_i$ as far to the left as possible.
If it is equal to the last entry of the $i+1$th row, the final switch will merge $y_i$ with the last entry.
Then, from right to left, switch each entry in the $i+1$th row into the $i$th row.
The resulting $i$th row is cleared, coinciding with shifted Hecke insertion.
The $i+1$th row has only underlined entries.

\noindent \textbf{Case 2:} $z_i$ exists, and the entry preceding it is less than $y_i$.

Switch $y_i$ as far to the left as possible, so that it is above $z_i$.
Then, from right to left, switch each other entry in the $i+1$th row into the $i$th row:
\[\begin{array}{l c r}
\ytableausetup{mathmode, boxsize=2.5em}
\begin{ytableau}
\none[\cdots] & \underline{k-1} & y_i & \underline{k} & \none[\cdots] \\
\none[\cdots] & x_{k-1} & z_i & x_k & \none[\cdots]\\
\end{ytableau}

\rightarrow

\ytableausetup{mathmode, boxsize=2.5em}
\begin{ytableau}
\none[\cdots] & x_{k-1} & y_i & x_k & \none[\cdots] \\
\none[\cdots] & \underline{k-1} & z_i & \underline{k} & \none[\cdots]\\
\end{ytableau}
\end{array}\]
The resulting $i$th row is cleared, coinciding with shifted Hecke insertion.
The $i+1$th row is now normal.

\noindent \textbf{Case 3:} $z_i$ exists, the entry preceding it is equal to $y_i$, and $z_i < a$.

Switch $y_i$ to the left until it is above $z_i$.
The next switch merges $y_i$ with the $k-1$th entry in the $i+1$th row, immediately preceding $z_i$.
Since $z_i < a$, we may switch $z_i$ into both of the entries vacated by $y_i$.
\[\begin{array}{l c r}
\ytableausetup{mathmode, boxsize=1.5em}
\begin{ytableau}
\none[\cdots] & \underline{t} & y_i & \none[\cdots] \\
\none[\cdots] & y_i & z_i & \none[\cdots]\\
\none[\cdots] & a & b & \none[\cdots]\\
\end{ytableau}

\rightarrow

\ytableausetup{mathmode, boxsize=1.5em}
\begin{ytableau}
\none[\cdots] & y_i & \underline{t} & \none[\cdots] \\
\none[\cdots] & \underline{t} & z_i & \none[\cdots]\\
\none[\cdots] & a & b & \none[\cdots]\\
\end{ytableau}

\rightarrow

\ytableausetup{mathmode, boxsize=1.5em}
\begin{ytableau}
\none[\cdots] & y_i & z_i & \none[\cdots] \\
\none[\cdots] & z_i & \underline{t} & \none[\cdots]\\
\none[\cdots] & a & b & \none[\cdots]\\
\end{ytableau}
\end{array}\]
From right to left, switch the remaining entries in the $i+1$th row into the $i$th row.
The resulting $i$th row is cleared, coinciding with shifted Hecke insertion.
The $i+1$th row is now normal.

\noindent \textbf{`Case 4:'} $z_i$ exists, the entry preceding it is equal to $y_i$, and $z_i > a$.

Switch $y_i$ to the left until it is above $z_i$.
The next switch merges $y_i$ with the $k-1$th entry in the $i+1$th row, immediately preceding $z_i$.
Since $z_i >a$, we must switch $a$ into the $i+1$th row before switching $z_i$ into the $i$th row.
Then $b$ splits into two rows.
\[\begin{array}{l c r}
\ytableausetup{mathmode, boxsize=1.5em}
\begin{ytableau}
\none[\cdots] & \underline{t} & y_i & \none[\cdots] \\
\none[\cdots] & y_i & z_i & \none[\cdots]\\
\none[\cdots] & a & b & \none[\cdots]\\
\end{ytableau}

\rightarrow

\ytableausetup{mathmode, boxsize=1.5em}
\begin{ytableau}
\none[\cdots] & y_i & \underline{t} & \none[\cdots] \\
\none[\cdots] & \underline{t} & z_i & \none[\cdots]\\
\none[\cdots] & a & b & \none[\cdots]\\
\end{ytableau}

\rightarrow

\ytableausetup{mathmode, boxsize=1.5em}
\begin{ytableau}
\none[\cdots] & y_i & \underline{t} & \none[\cdots] \\
\none[\cdots] & a & z_i & \none[\cdots]\\
\none[\cdots] & \underline{t} & b & \none[\cdots]\\
\end{ytableau}

\rightarrow

\ytableausetup{mathmode, boxsize=1.5em}
\begin{ytableau}
\none[\cdots] & y_i & z_i & \none[\cdots] \\
\none[\cdots] & a & \underline{t} & \none[\cdots]\\
\none[\cdots] & \underline{t} & b & \none[\cdots]\\
\end{ytableau}

\rightarrow

\ytableausetup{mathmode, boxsize=1.5em}
\begin{ytableau}
\none[\cdots] & y_i & z_i & \none[\cdots] \\
\none[\cdots] & a & b & \none[\cdots]\\
\none[\cdots] & b & \underline{t} & \none[\cdots]\\
\end{ytableau}
\end{array}\]
Then from right to left, switch the remaining entries in the $i+1$th row into the $i$th row.
Next, switch the remaining entries in the $i+2$th row into the $i+1$th row.
Note the $i$th and $i+1$th rows are now cleared.
To see they coincide with shifted Hecke insertion, observe that in this case $z_i$ fails to insert into the $i+2$th row since the resulting tableau would not be increasing.
However, it still bumps $b$ (should it exist), which is now the sole entry in the $i+2$th row that is not underlined.
Therefore, the $i{+}2$th row is normal.

In fact, this case is slightly more complicated.
Let $a_1, a_2, \dots, a_p$ and $b_1,b_2,\dots, b_p$ be the entries in the $k-1$th and $k$th columns in rows $i+2, i+2, \dots, i+p-1$.
If $a_i < b_{i+1}$ for each $i$, then the above case applies for each of the rows $i+2, i+2, \dots, i+p-1$.
We are then performing $p$ bumps simultaneously, resulting in $p$ new cleared rows with the $i+p+1$th row normal with $b_p$ the sole entry that is not underlined. 
The previous argument is the case where $p = 1$.

If column insertion is never triggered, every row of the resulting tableau will be cleared.
Then row by row from bottom to top, use switches to shift each row over, filling the shifted shape.

We will now describe how to transition to column insertion.
The bumping procedure will then be identical to the row bumping, except initial entries will not be underlined.
 
When $y_i$ bumps an element $z_i$ from column $i+1$ of the main diagonal, we begin column insertion.
Note that the first $i-1$ rows are cleared, that is of the form
\[\ytableausetup
{mathmode, boxsize=2em}
\begin{ytableau}
\large
\underline{t_1} & x_1 & \cdots & x_n & y_i & x_{n+1} & \cdots & x_{m-1} \\
\end{ytableau}\ .\]
In these rows, from top to bottom, shift all entries in the first $i+1$ columns to the left, filling the initial underlined entry.
As a result, the $i+1$th column will be normal with $z_i$ the sole entry that is not underlined.
The first $i$ columns will be the same as found in shifted Hecke insertion, as will columns $i+2$ and on.
We have now transitioned to column insertion, inserting $z_i$ into the $i+2$th column.

Since each sequence of switches described is viable, the whole sequence described is viable.
Moreover, it reproduces shifted Hecke insertion at every step.
This completes our proof.
\end{proof}

\begin{figure}\label{fig:hv}
\caption{$T \leftarrow 5 = T$ using shifted jeu de taquin.}

\[ T = 
\ytableausetup{boxsize=1.2em}
\begin{ytableau}
1 & 2 & 3 & 4 & 6 \\
\none & 4 & 5 & 6 & 8 \\
\none & \none & 6 & 7
\end{ytableau}
\]

\begin{enumerate}

\item We begin with $T*5$:
\[\begin{ytableau}
\underline{1} & \underline{2} & \underline{3} & \underline{4} & \underline{5} & \underline{6} & 5 \\
\none & 1 & 2 & 3 & 4 & 6 \\
\none & \none & 4 & 5 & 6 & 8 \\
\none & \none & \none & 6 & 7
\end{ytableau}
\]

\item We switch 5 as far as possible to the left using switch sequence $(\underline{6},5)$:
\[\begin{ytableau}
\underline{1} & \underline{2} & \underline{3} & \underline{4} & \underline{5} & 5 & \underline{6}  \\
\none & 1 & 2 & 3 & 4 & 6 \\
\none & \none & 4 & 5 & 6 & 8 \\
\none & \none & \none & 6 & 7
\end{ytableau}
\]

\item Case 2:  Using the switch sequence $(\underline{5}, 4),(\underline{4},3),(\underline{3}, 2),(\underline{2},1)$, we obtain
\[\begin{ytableau}
\underline{1} & 1 & 2 & 3 & 4 & 5 & \underline{6}  \\
\none & \underline{2} & \underline{3} & \underline{4} & \underline{5} & 6 \\
\none & \none & 4 & 5 & 6 & 8 \\
\none & \none & \none & 6 & 7
\end{ytableau}
\]

\item Case 4:  A vertical violation is about to occur, so we perform the switch sequence $(\underline{5},6),(\underline{4},5),(\underline{3},4)$ to obtain
\[\begin{ytableau}
\underline{1} & 1 & 2 & 3 & 4 & 5 & \underline{6}  \\
\none & \underline{2} & 4 & 5 & 6 & \underline{5} \\
\none & \none & \underline{3} & \underline{4} & \underline{5} & 8 \\
\none & \none & \none & 6 & 7
\end{ytableau}
\]

\noindent Then we perform the switch sequence $(\underline{5},7),(\underline{5},8),(\underline{4},6)$ to obtain
\[\begin{ytableau}
\underline{1} & 1 & 2 & 3 & 4 & 5 & \underline{6}  \\
\none & \underline{2} & 4 & 5 & 6 & 8 \\
\none & \none & \underline{3} & 6 & 7 & \underline{5} \\
\none & \none & \none & \underline{4} & \underline{5}
\end{ytableau}
\]

\item Terminal step: We switch the underlined entries on the main diagonal to the outside from bottom to top, using switch sequence $(\underline{3},6),(\underline{3},7),(\underline{2},4),(\underline{2},5),(\underline{2},6),(\underline{2},8),(\underline{1},1),$ $(\underline{1},2),(\underline{1},3),(\underline{1},4),(\underline{1},5)$:
\[\begin{ytableau}
 1 & 2 & 3 & 4 & 5 & \underline{1} & \underline{6}  \\
\none & 4 & 5 & 6 & 8 &\underline{2} \\
\none & \none & 6 & 7 & \underline{3} & \underline{5} \\
\none & \none & \none & \underline{4} & \underline{5}
\end{ytableau}
\]
which is the same as the result of shifted Hecke insertion.  
The switch sequence used is valid.  \\
\end{enumerate}
\end{figure} 

\ 

Using a result of Buch and Samuel~\cite{BuchandSamuel2013}, we can now relate shifted Hecke insertion to the weak $K$-Knuth relations.    
Tableaux $T$ and $T'$ are called \textit{jeu de taquin equivalent} if one can be obtained from another by a sequence of jeu de taquin slides.

\begin{thm}\cite[Theorem~7.8]{BuchandSamuel2013}
Let $T$ and $T\pr$ be increasing shifted tableaux. 
Then 
\[
\row(T)\ \ksw\ \row (T\pr)
\] 
if and only if $T$ and $T\pr$ are jeu de taquin equivalent.  
\end{thm}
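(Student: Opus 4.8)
The plan is to prove the two implications separately, establishing the easier ``jeu de taquin equivalent $\Rightarrow$ weak $K$-Knuth equivalent'' direction first and then using it to bootstrap the converse. For the forward direction it suffices, by symmetry and transitivity of $\ksw$, to show that a single shifted $K$-jdt slide sends $\row(T)$ to a weak $K$-Knuth equivalent word. Since each slide is a composition of switch operators $(i,\circ)$ as in Definition~\ref{def:slide}, I would track how the reading word changes as the hole $\circ$ migrates through the tableau. A horizontal switch moves $\circ$ within a single row and hence leaves $\row(T)$ literally unchanged, because the hole is not recorded in the reading word. All the content therefore lies in the vertical switches, each of which transfers one entry between two consecutive rows; relative to the entries of those rows that this entry passes in $\row(T)$, the increasing condition pins down the order relations among them, so that each transfer is a composition of the relations (2)--(4) of Definition~\ref{def:wkknuth}, while the $K$-theoretic merges and splits account for relation (1). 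Switches that straddle the main diagonal must be treated separately and are precisely where relation (5) is needed.

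For the converse, suppose $\row(T)\ \ksw\ \row(T\pr)$. I would first reduce both tableaux to antidiagonal form: every increasing shifted skew tableau $T$ is jeu de taquin equivalent to an antidiagonal tableau $T_u$ for some word $u$, obtained by repeatedly applying reverse slides until each entry occupies its own antidiagonal cell. Applying this to $T$ and $T\pr$ gives $T\sim_{jdt}T_u$ and $T\pr\sim_{jdt}T_{u\pr}$, and the forward direction already proved yields $u\ \ksw\ \row(T)\ \ksw\ \row(T\pr)\ \ksw\ u\pr$, so $u\ \ksw\ u\pr$. It then remains to show that $u\ \ksw\ u\pr$ implies $T_u\sim_{jdt}T_{u\pr}$, and since $\ksw$ is generated by the five relations of Definition~\ref{def:wkknuth}, I reduce further to realizing each single relation by a sequence of slides on the antidiagonal that leaves the cells corresponding to the unchanged prefix $\mathbf{u}$ and suffix $\mathbf{v}$ untouched. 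Here Corollary~\ref{switchsequence} and Theorem~\ref{jdtgivesinsertion} provide the flexibility to choose convenient rectification orders that localize the slides to the three or four active cells.

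The main obstacle is exactly this last step: exhibiting, for each of the five relations, an explicit family of slides that transforms one antidiagonal word into the other while keeping every intermediate tableau increasing and leaving the surrounding cells alone. Relation (1), $(\mathbf{u},a,a,\mathbf{v})\ \ksw\ (\mathbf{u},a,\mathbf{v})$, changes the number of cells and so must be realized by a genuinely $K$-theoretic slide in which two equal entries merge; relation (5), $(a,b,\mathbf{u})\ \ksw\ (b,a,\mathbf{u})$, is the shifted-specific move and can only be produced by slides interacting with the main diagonal, where the column-insertion behavior of shifted $K$-jdt differs from the row behavior. Controlling the interaction of these local moves with an arbitrary neighboring configuration, and verifying increasingness throughout, is the delicate part; the commutation of disjoint switches recorded in Lemma~\ref{switchcommute} is the main tool for decoupling the active region from the parked cells so that the local computation can be carried out once and then transported into any word.
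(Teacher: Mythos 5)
This theorem is stated in the paper as a citation of \cite[Theorem~7.8]{BuchandSamuel2013}; the paper gives no proof of its own, so there is nothing internal to compare your argument against. Judged on its own terms, your outline does follow the strategy one would expect (and essentially the Buch--Samuel/Thomas--Yong strategy): show that individual $K$-jdt slides preserve the weak $K$-Knuth class of the reading word, reduce arbitrary tableaux to antidiagonal ones, and realize each generating relation of $\ksw$ by explicit slides. But as written it is a plan with the two load-bearing steps left unexecuted, and one of the executed claims is not quite right.

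Concretely: in the forward direction, the assertion that a horizontal switch leaves $\row(T)$ literally unchanged fails in general, because a single switch operator $(i,\circ)$ acts on \emph{all} boxes labeled $i$ adjacent to a hole simultaneously -- an entry can be copied both leftward and upward in one switch, so the reading word can gain a letter even when part of the motion is horizontal. The horizontal/vertical dichotomy you rely on to localize the analysis therefore does not cleanly decompose the slide, and the case analysis has to be organized around the local configurations of holes and equal entries instead. In the converse direction, you correctly identify the crux -- exhibiting, for each of the five relations, a slide sequence between the corresponding antidiagonal tableaux that keeps all intermediate tableaux increasing and does not disturb the cells of $\mathbf{u}$ and $\mathbf{v}$ -- but you do not carry it out, and this is where all the actual work lives (relation (1) changes the number of boxes and needs a genuine merge; relation (5) needs the diagonal behavior). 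A cleaner route for this direction, which avoids introducing auxiliary words $u,u\pr$, is to prove directly that every increasing shifted tableau $T$ is jdt-equivalent to the antidiagonal tableau $T_{\row(T)}$ of its own reading word; combined with the realization of the relations this gives the converse immediately. As it stands, your submission is a credible proof sketch with acknowledged gaps rather than a proof.
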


The previous theorem says that weak $K$-Knuth equivalence of tableaux is the same as jeu de taquin equivalence of tableaux. From this point on, we refer to both as ``equivalence.'' 

\begin{cor}\label{insertionimplieskknuth}
If $\Tab(u) = \Tab(v)$, then $u\  \ksw \ v$.  
\end{cor}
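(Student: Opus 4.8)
The plan is to route everything through jeu de taquin and then invoke the Buch--Samuel correspondence between weak $K$-Knuth equivalence and jeu de taquin equivalence. The crucial starting observation is that the antidiagonal tableau $T_w$ has reading word exactly $w$: since its boxes lie on the antidiagonal, reading them from bottom to top recovers $w_1, w_2, \ldots, w_n$, so $\row(T_w) = w$. This is the bridge that lets me pass between words and tableaux. I would also note that $T_w$ is a (vacuously) increasing shifted skew tableau, since each of its rows and columns contains a single box, so that the hypotheses of the Buch--Samuel theorem are met.

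First I would apply Theorem~\ref{jdtgivesinsertion}, which identifies $\Tab(w)$ with the superstandard $K$-rectification of $T_w$. Because a superstandard $K$-rectification is by definition obtained from $T_w$ by a sequence of forward shifted $K$-jdt slides, the tableaux $T_w$ and $\Tab(w)$ are connected by a chain of jeu de taquin slides and hence are jeu de taquin equivalent. Invoking the theorem of Buch and Samuel~\cite[Theorem~7.8]{BuchandSamuel2013}, which states that jeu de taquin equivalence of increasing shifted tableaux coincides with weak $K$-Knuth equivalence of their reading words, I conclude that $\row(T_w) \ksw \row(\Tab(w))$. Combined with $\row(T_w) = w$, this produces the key identity $w \ksw \row(\Tab(w))$, valid for every word $w$.

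With this identity in hand the corollary is immediate. Assuming $\Tab(u) = \Tab(v)$, I would chain the identity at $u$ and at $v$:
\[
u \ \ksw\ \row(\Tab(u)) = \row(\Tab(v)) \ \ksw\ v,
\]
and since $\ksw$ is an equivalence relation, being the symmetric transitive closure of its generating relations, transitivity yields $u \ksw v$.

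The only point genuinely requiring care, and the step I would scrutinize most, is the claim that $T_w$ and $\Tab(w)$ are jeu de taquin equivalent in the precise sense demanded by the Buch--Samuel theorem. This is essentially definitional once Theorem~\ref{jdtgivesinsertion} is in place, since superstandard $K$-rectification is assembled entirely from forward slides; but one must confirm that these forward slides are instances of the jeu de taquin slides appearing in the equivalence, and that the identification $\row(T_w) = w$ respects the orientation conventions fixed for the antidiagonal tableau. Both checks are routine, so I expect no substantial obstacle to remain.
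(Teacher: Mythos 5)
Your argument is correct and is exactly the route the paper intends: the corollary follows by combining Theorem~\ref{jdtgivesinsertion} (so $\Tab(w)$ is a $K$-rectification of $T_w$, hence jeu de taquin equivalent to it) with Buch--Samuel's Theorem~7.8 and the observation $\row(T_w)=w$, then chaining through $\row(\Tab(u))=\row(\Tab(v))$ by transitivity. The paper leaves this implicit, and your write-up supplies precisely the missing details with no gaps.
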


\begin{remark}\label{notURT}
The converse of this statement does not hold.  
Consider the words 12453 and 124533, which are easily seen to be weakly $K$-Knuth equivalent.  
We compute that shifted Hecke insertion gives the following distinct tableaux.
\[ \begin{array}{l c r}
\ytableausetup{boxsize=1.5em}
\Tab(12453) = 
\begin{ytableau}
1 & 2 & 3 & 5 \\
\none & 4
\end{ytableau}
& &
\Tab(124533) = 
\begin{ytableau}
1 & 2 & 3 & 5 \\
\none & 4 & 5
\end{ytableau}
\end{array}\]
\end{remark}

\subsection{Unique Rectification Targets}
As we have seen in Remark \ref{notURT}, weak $K$-Knuth equivalence classes may have several corresponding insertion tableaux. 
This is a key difference between weak $K$-Knuth equivalence and the classical Knuth equivalence. 
Of particular importance in our setting are the classes of words with only one tableau. 
Such classes of words will be crucial in our Littlewood-Richardson rule.

\begin{defn}\cite[Definition 3.5]{BuchandSamuel2013}
\label{def:urt}
An increasing shifted tableau $T$ is a \textit{unique rectification target}, or a URT, if it is the only tableau in its weak $K$-Knuth equivalence class.
Equivalently, $T$ is a URT if for every $w\ \ksw\ \row(T)$ we have $\Tab(w) = T$.  
If $\Tab(w)$ is a URT, we call the equivalence class of $w$ a \textit{unique rectification class}.  
\end{defn}

We refer the reader to \cite{BuchandSamuel2013, clifford2014k} for a more detailed discussion of URTs for shifted tableaux and straight shape tableaux. 
The tableaux given in Example~\ref{notURT} are equivalent to each other, and hence neither is a URT.  

The \emph{minimal increasing shifted tableau} $M_\lambda$ of a shifted shape $\lambda$ is the tableau obtained by filling the boxes of $\lambda$ with the smallest values allowed in an increasing tableau. 
For example,
\[\begin{array}{l r}
M_{(4,2)} = 
\begin{ytableau}
1 & 2 & 3 & 4 \\
\none & 3 & 4
\end{ytableau} &
M_{(5, 2, 1)} = 
\begin{ytableau}
1 & 2 & 3 & 4 & 5 \\
\none & 3 & 4\\
\none & \none & 5
\end{ytableau} 
\end{array}\]
are minimal increasing tableaux.

\begin{thm}\label{thm:urt} \ 

\begin{enumerate}
\item \emph{\cite[Corollary 7.2]{BuchandSamuel2013}} Minimal increasing shifted tableaux are URTs.

\item \emph{\cite[Theorem 1.1]{clifford2014k}} Superstandard shifted tableaux are URTs.
\end{enumerate}

\end{thm}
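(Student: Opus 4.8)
The plan is to prove both parts through the equivalent characterization of a URT in Definition~\ref{def:urt}: a tableau $T$ is a URT precisely when every word $w$ with $w\ \ksw\ \row(T)$ satisfies $\Tab(w) = T$. Write $S_\lambda$ for the superstandard tableau of shape $\lambda$ and $M_\lambda$ for the minimal increasing tableau. I would induct on the number of boxes $n = |\lambda|$, the base case $n=1$ being immediate once one observes that the only words equivalent to $1$ are $1,11,111,\dots$. The two structural inputs are the interval-restriction lemmas: Lemma~\ref{interval} (if $w\ \ksw\ w'$ then $w|_I\ \ksw\ w'|_I$) and Lemma~\ref{intervaltableau} ($\Tab(w)|_{[k]} = \Tab(w|_{[k]})$). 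A third, easily verified input is that the set of distinct letters in a word is preserved by each of the five relations of Definition~\ref{def:wkknuth}; hence any $w\ \ksw\ \row(T)$ uses exactly the letters appearing in $T$.

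For the inductive step, let $m$ be the largest letter of $T$ (so $m = n$ for $S_\lambda$, while for $M_\lambda$ the value $m$ may occur in several boxes). Given $w\ \ksw\ \row(T)$, Lemma~\ref{interval} gives $w|_{[m-1]}\ \ksw\ \row(T)|_{[m-1]} = \row\big(T|_{[m-1]}\big)$. The key structural observation is that $T|_{[m-1]}$ is again a tableau of the same type, of a strictly smaller shape: deleting the cells with entry $m$ from $S_\lambda$ removes the single deepest corner and yields the superstandard tableau of the corner-deleted shape, while the cells of $M_\lambda$ with entry below $m$ form a shifted subshape on which $M_\lambda$ restricts to the corresponding minimal tableau (since the entry of each box of $M_\lambda$ is weakly increasing along rows and down columns, its sublevel sets are shifted subshapes). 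The inductive hypothesis then gives $\Tab\big(w|_{[m-1]}\big) = T|_{[m-1]}$, and Lemma~\ref{intervaltableau} upgrades this to $\Tab(w)|_{[m-1]} = T|_{[m-1]}$. Thus $\Tab(w)$ already agrees with $T$ on every box with entry less than $m$.

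It remains to locate the boxes of $\Tab(w)$ carrying the maximal entry $m$ and to show they coincide exactly with those of $T$; this is the step I expect to be the main obstacle. The difficulty is that the interval restrictions are blind here: by Lemma~\ref{intervaltableau} the placement of the top letter $m$ is invisible to every restriction $[k]$ with $k < m$, so it cannot be pinned down by the induction alone and must be extracted from the full equivalence $w\ \ksw\ \row(T)$. My plan is to argue by rigidity of addable cells: $\Tab(w)$ is increasing and its entries equal to $m$ fill the skew region $\mathrm{sh}(\Tab(w))/\mathrm{sh}\big(T|_{[m-1]}\big)$, so each such box must be addable to $T|_{[m-1]}$ while keeping the tableau increasing, and I would show that any addable placement other than the one occurring in $T$ produces a tableau not equivalent to $T$. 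For $S_\lambda$ this is the cleaner case, since $m$ occupies a single forced corner; for $M_\lambda$ one must simultaneously control all of the maximal boxes, and here I would lean on the extremality built into $M_\lambda$ (every entry is as small as the increasing condition permits) to rule out alternative fillings, invoking the addable-cell analysis of Buch and Samuel and of Clifford, Thomas, and Yong where a direct check becomes unwieldy. Combining the fixed lower part with the forced placement of $m$ yields $\Tab(w) = T$, completing the induction and both parts of the theorem.
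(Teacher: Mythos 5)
The paper does not actually prove this theorem: both parts are imported verbatim from the literature (Corollary~7.2 of Buch--Samuel for minimal increasing tableaux and Theorem~1.1 of Clifford--Thomas--Yong for superstandard tableaux), so there is no in-paper argument to compare yours against. Judged on its own, your proposal sets up a reasonable skeleton --- the support of a word is invariant under the weak $K$-Knuth relations, Lemma~\ref{interval} and Lemma~\ref{intervaltableau} pin down $\Tab(w)|_{[m-1]}$ by induction, and the sublevel-set observations for $S_\lambda$ and $M_\lambda$ are correct --- but it stops exactly where the theorem begins. Everything up to the last paragraph only shows that $\Tab(w)$ agrees with $T$ below the top value $m$ and that the $m$'s occupy some antichain of addable corners of $\mathrm{sh}(T|_{[m-1]})$; it does not determine which antichain, and no interval restriction can see this (as you note, the top letter is invisible to every $[k]$ with $k<m$, and restrictions to non-initial intervals are not compatible with insertion). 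The only information left is the full equivalence $w\ \ksw\ \row(T)$, and extracting the placement of the $m$'s from it is precisely the content of the two cited theorems.

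Worse, your fallback --- ``invoking the addable-cell analysis of Buch and Samuel and of Clifford, Thomas, and Yong'' --- is circular: those analyses \emph{are} the proofs of the statements you are trying to establish. Note also that the multiplicity of $m$ in $w$ is not an invariant of the class (relation (1) duplicates letters), so you cannot even count how many corners get filled. Concretely, for $M_{(4,2)}$ your induction leaves open whether a word equivalent to $\row(M_{(4,2)})$ might insert to $M_{(3,1)}$ with a single $4$ adjoined at $(1,4)$ only, or at $(2,3)$ only, rather than at both; ruling this out requires a genuine structural argument about the equivalence class (Buch and Samuel's proof for minimal tableaux and the substantially harder Clifford--Thomas--Yong argument for superstandard tableaux, which is the main theorem of their paper). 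As written, the proposal is an honest reduction of the problem to its hardest step, not a proof.
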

As a consequence, we see there are URTs for every shifted shape.


\section{Shifted weak stable Grothendieck polynomials}
\subsection{Weak shifted stable Grothendieck polynomials}
We define the weak shifted stable Grothendieck polynomial $K_\lambda$ as a weighted generating function over weak set-valued shifted tableaux.

\begin{defn}\label{wsvst}
A \textit{weak set-valued shifted tableau} is a filling of the boxes of a shifted shape with finite, nonempty multisets of primed and unprimed positive integers with ordering $1\pr<1<2\pr<2<\cdots $ such that:

\begin{enumerate}
\item The smallest number in each box is greater than or equal to the largest number in the box directly to the left of it, if that box exists.
\item The smallest number in each box is greater than or equal to the largest number in the box directly above it, if that box exists.
\item There are no primed entries on the main diagonal.
\item Each unprimed integer appears in at most one box in each column.
\item Each primed integer appears in at most one box in each row. \\
\end{enumerate} 
\end{defn}
Given any weak set-valued shifted tableau $T$, we define $x^T$ to be the monomial $\prod_{i\geq 1}x_i^{a_i}$, where $a_i$ is the number of occurrences of $i$ and $i\pr$ in $T$.
For example, the weak set-valued tableaux $T_1$ and $T_2$ below have $x^{T_1} = x_1x_2x_3x_4x_5x_6x_7$ and $x^{T_2} = x_1x_2x_3^2x_4x_5x_6^2$.
\begin{center}
$T_1 =$ 
\ytableausetup{boxsize=.72cm}
\begin{ytableau}
1 & 2 & 3\pr 4\\
\none & 5 & 6 7\pr
\end{ytableau} \hspace{1in}
$T_2 =$ 
\begin{ytableau}
1 & 2 & 3\pr 3\\
\none & 4 & 5 6\pr 6
\end{ytableau}
\end{center} 

Recall that we denote a shifted shape $\lambda$ as $(\lambda_1, \lambda_2, \cdots \lambda_l)$, where $l$ is the number of rows and $\lambda_i$ is the number of boxes of $i$th row.

\begin{defn}\label{k-lambda defn}
The \textit{weak shifted stable Grothendieck polynomial} is  
\[
K_\lambda = \sum_{T}x^T,
\] 
where the sum is over the set of weak set-valued tableaux $T$ of shape $\lambda$. 
\end{defn}

\begin{ex}We have 
\[K_{(2,1)} = x_1^2x_2+2x_1x_2x_3+3x_1^2x_2^2+5x_1^2x_2x_3+5x_1x_2^2x_3+\cdots,\] where the coefficient of $x_1^2x_2^2$ is 3 because of the tableaux shown below. \begin{center}
\begin{ytableau}
1 1 & 2\pr\\
\none & 2
\end{ytableau} \hspace{.8in}
\begin{ytableau}
1 & 1 2\pr\\
\none & 2
\end{ytableau} \hspace{.8in}
\begin{ytableau}
1 & 1\\
\none & 22
\end{ytableau}
\end{center}
Note that the lowest degree terms of $K_\lambda$ are a sum over shifted semistandard Young tableaux, so they form the Schur $P$-function $P_\lambda$.
\end{ex}

We refer to the ring spanned by the odd power sum symmetric functions $\{p_1,p_3,p_3,\ldots\}$ as the ring of \textit{signed symmetric functions}. 
The Schur $P$-functions are a basis for this ring, so one might expect the $K_\lambda$ to lie in this ring. 
However, this is not the case, since the degree two terms of $K_{(1)}$ are not a multiple of $P_{(2)}$, which is the only Schur $P$-function with terms of degree two.
\begin{prop} 
$\{K_\lambda\}$ does not lie in the ring of signed symmetric functions.
\end{prop}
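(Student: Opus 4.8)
The plan is to reduce the statement to a single low-degree computation, following the hint already present in the proposition. Let $\Gamma$ denote the ring of signed symmetric functions, spanned by the odd power sums $p_1, p_3, p_5, \ldots$. It suffices to exhibit one shape $\lambda$ for which $K_\lambda \notin \Gamma$, and the simplest candidate is $\lambda = (1)$. First I would record that the degree-two homogeneous component of $\Gamma$ is one-dimensional, spanned by $p_1^2$: among the generators $p_1, p_3, p_5, \ldots$ the only product homogeneous of total degree two is $p_1^2$. Since $P_{(2)}$ is a nonzero degree-two element of $\Gamma$ (the unique Schur $P$-function of degree two), it too spans this line. Thus it is enough to show that the degree-two part of $K_{(1)}$ is \emph{not} a scalar multiple of $p_1^2$, equivalently of $P_{(2)}$.

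Next I would enumerate the weak set-valued shifted tableaux of shape $(1)$ contributing in degrees one and two. The shape $(1)$ is a single box on the main diagonal, so by condition (3) of Definition~\ref{wsvst} no primed entries are allowed, and the box must be filled with a nonempty multiset of unprimed positive integers, with conditions (1)--(2) vacuous and conditions (4)--(5) restricting only occurrences across distinct boxes. In degree one the fillings are the singletons $\{i\}$, contributing $\sum_i x_i = p_1$. In degree two the fillings are the two-element multisets: the repeated multisets $\{i,i\}$, which are legal precisely because (4)--(5) do not forbid a repeated entry inside a single box, together with the mixed multisets $\{i,j\}$ with $i<j$. (The same phenomenon of repeated unprimed entries already occurs in the computation of $K_{(2,1)}$ above.) Hence the degree-two component of $K_{(1)}$ is
\[
\sum_i x_i^2 + \sum_{i<j} x_i x_j = h_2 = \tfrac12 p_1^2 + \tfrac12 p_2 .
\]

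Finally I would observe that $p_1^2$ and $p_2$ are linearly independent, so the nonzero coefficient $\tfrac12$ of $p_2$ shows that this degree-two component is not a scalar multiple of $p_1^2 = P_{(2)}$. Therefore the degree-two part of $K_{(1)}$ lies outside $\Gamma$, whence $K_{(1)} \notin \Gamma$ and $\{K_\lambda\}$ does not lie in the ring of signed symmetric functions. The only step requiring genuine care is the tableau enumeration: one must verify that a multiset with a repeated unprimed entry is a legal filling of a diagonal box, since it is exactly the resulting contribution $\sum_i x_i^2 = p_2$ — an \emph{even} power sum — that cannot be absorbed and that forces $K_{(1)}$ out of the odd-power-sum ring.
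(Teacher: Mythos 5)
Your proof is correct and takes essentially the same route as the paper, whose justification is the remark immediately preceding the proposition: the degree-two component of $K_{(1)}$ is not a multiple of $P_{(2)}$, the unique Schur $P$-function in that degree. You simply make the computation explicit (the degree-two part is $h_2=\tfrac12 p_1^2+\tfrac12 p_2$, which is not proportional to $P_{(2)}=p_1^2$), which is a faithful elaboration of the paper's argument.
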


The weak shifted stable Grothendieck polynomial $K_\lambda$ is closely related to the shifted stable Grothendieck polynomial $GP_\lambda$ defined by Ikeda and Naruse in  \cite{ikeda2013k}. 
Here, 
\[
GP_\lambda = \sum_T (-1)^{|T|-|\lambda|} x^T,
\]
where we sum over all set-valued shifted tableaux $T$ of shape $\lambda$ (see Definition~\ref{def:setvaluedshifted}), $|T|$ is the degree of $x^T$, and $|\lambda|$ is the number of boxes in $\lambda$.

To demonstrate this relationship, we require an observation.
For each set-valued shifted tableau $T$, we define a family $\mathcal{T}$ of weak set-valued shifted tableaux of the same shape, where $W\in\mathcal{T}$ if and only if $W$ can be obtained from $T$ by changing subsets in boxes of $T$ into multisets. 
Conversely, given a weak set-valued shifted tabelau $W$, we can identify the proper set-valued shifted tableau $T$ by turning multisets into subsets containing the same primed and unprimed integers. 
For example, given $T$ below, $W_1$ and $W_2$ are in $\mathcal{T}$.
\begin{center}
$T=$
\ytableausetup{boxsize=1cm}
\begin{ytableau}
12 & 3' 3 & 4 5' \\
\none & 4
\end{ytableau}\hspace{.5in}
$W_1=$
\begin{ytableau}
1122 & 3' 3 & 445' \\
\none & 444
\end{ytableau}\hspace{.5in}
$W_2=$
\begin{ytableau}
1112 & 3'33 & 4 4 5' 5' \\
\none & 44
\end{ytableau}
\end{center}

\begin{prop}\label{prop:Ikeda} We have
$$K_\lambda(x_1,x_2,\ldots)=(-1)^{|\lambda|} GP_\lambda \left (\frac{-x_1}{1-x_1},\frac{-x_2}{1-x_2},\ldots \right).$$
\end{prop}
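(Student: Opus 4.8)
The plan is to group the weak set-valued shifted tableaux appearing in Definition~\ref{k-lambda defn} according to the set-valued shifted tableau they collapse to, and then to sum a geometric series over the repetition multiplicities. Using the family $\mathcal{T}$ described in the paragraph preceding the proposition, every weak set-valued tableau $W$ of shape $\lambda$ lies in exactly one family $\mathcal{T}$, indexed by the unique set-valued shifted tableau $T$ obtained by replacing each multiset by its underlying set. So first I would partition the defining sum as
\[
K_\lambda = \sum_T \sum_{W \in \mathcal{T}} x^W,
\]
where the outer sum runs over all set-valued shifted tableaux $T$ of shape $\lambda$.

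Next I would evaluate the inner sum for a fixed $T$. The key point is that a tableau $W \in \mathcal{T}$ is specified precisely by choosing, independently for each occurrence of a value $i$ or $i'$ in $T$, a multiplicity $m \geq 1$ of copies of that entry to place in the corresponding box of $W$. I would check that these choices are genuinely free, i.e. that every such assignment yields a $W$ satisfying the five conditions of Definition~\ref{wsvst}: duplicating an entry inside a box changes neither the smallest nor the largest element of that box, so conditions (1)--(2) are inherited from $T$, and it changes neither which boxes of a column contain a given unprimed integer nor which boxes of a row contain a given primed integer, so conditions (3)--(5) are inherited as well. An occurrence of value $i$ with multiplicity $m$ contributes $x_i^m$ to $x^W$, so summing the independent choices gives $\sum_{m \geq 1} x_i^m = x_i/(1-x_i)$ per occurrence. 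Writing $c_i(T)$ for the number of occurrences of $i$ and $i'$ in $T$, this yields
\[
\sum_{W \in \mathcal{T}} x^W = \prod_i \left( \frac{x_i}{1-x_i} \right)^{c_i(T)}.
\]

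Finally I would substitute and bookkeep signs. Setting $y_i = x_i/(1-x_i)$ gives $K_\lambda(x) = \sum_T \prod_i y_i^{c_i(T)} = \sum_T y^T$, the signless generating function over set-valued tableaux evaluated in the $y_i$. Putting $z_i = -x_i/(1-x_i) = -y_i$ and using $|T| = \sum_i c_i(T)$, each monomial satisfies $z^T = (-1)^{|T|} y^T$, so
\[
GP_\lambda(z) = \sum_T (-1)^{|T|-|\lambda|} z^T = \sum_T (-1)^{|T|-|\lambda|}(-1)^{|T|} y^T = (-1)^{|\lambda|} \sum_T y^T = (-1)^{|\lambda|} K_\lambda(x),
\]
which rearranges to the claimed identity $K_\lambda(x) = (-1)^{|\lambda|} GP_\lambda(z)$.

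The main obstacle is the verification in the second step that the correspondence between weak set-valued tableaux $W$ and pairs (set-valued $T$, multiplicity assignment) is a genuine bijection: one must confirm both that no legal assignment of multiplicities can violate a tableau condition and, conversely, that every $W$ collapses to a valid set-valued $T$. This reduces to observing that all five conditions depend only on the underlying sets (conditions 1--3) and on which boxes contain which values (conditions 4--5), never on multiplicities. Once this independence is established, the geometric-series computation and the sign bookkeeping are routine.
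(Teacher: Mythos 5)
Your proposal is correct and follows essentially the same route as the paper: partition the weak set-valued tableaux by their underlying set-valued tableau, sum the geometric series $\sum_{m\ge 1}x_i^m = x_i/(1-x_i)$ over the independent multiplicities, and track the signs through the substitution. The only difference is that you spell out the verification that the five tableau conditions depend only on underlying sets and box membership, which the paper leaves implicit.
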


\begin{proof} 

For $T$ a set-valued shifted tableau with $x^T=x_1^{a_1}x_2^{a_2}x_3^{a_3}\ldots$,
\[
\displaystyle\sum_{W\in\mathcal{T}} x^W
= \left(\frac{x_1}{1-x_1}\right)^{a_1}\left(\frac{x_2}{1-x_2}\right)^{a_2}\left(\frac{x_3}{1-x_3}\right)^{a_3}\ldots.
\]
Therefore 
\begin{eqnarray*}
(-1)^{|\lambda|}GP_\lambda\left (\frac{-x_1}{1-x_1},\frac{-x_2}{1-x_2},\ldots \right) &=& (-1)^{|\lambda|}\displaystyle\sum_T (-1)^{|T|-|\lambda|}\left(\frac{-x}{1-x}\right)^T \\ 
&=& \displaystyle\sum_T \left(\frac{x}{1-x}\right)^T = \displaystyle\sum_T \displaystyle\sum_{W \in \mathcal{T}} x^W\\ 
&=& K_\lambda.
\end{eqnarray*}
\end{proof}

\begin{remark}
Our weak shifted stable Grothendieck polynomials $K_\lambda$ are related to the $GP_\lambda$ in the same way that the weak stable Grothendieck polynomials $J_\lambda$ are related to the stable Grothendieck polynomials $G_\lambda$ in \cite{lam2007combinatorial}. See \cite{PPK} for the analogue of Proposition \ref{prop:Ikeda}.
\end{remark}

\subsection{$K_\lambda$ and fundamental quasisymmetric functions}
The \emph{descent set} of a word $w = w_1w_2\ldots w_n$ is $\D(w) = \{i : w_i > w_{i+1}\}$. 
Similarly, the \emph{descent set} of a standard set-valued shifted tableau $T$ is
\[\D(T) =  
\left\lbrace \begin{array}{c c l}
\multirow{5}{*}{\textit{i}} &
\multirow{5}{*}{:} &
\text{both }i\text{ and }(i+1)\pr\text{ appear}\\
& & \qquad \textsc{ or } \\
& & i \text{ is strictly above } i+1 \\ 
& & \qquad \textsc{ or } \\
& &i\pr \text{ is weakly below } (i+1)\pr \text{ but not in the same box}
\end{array} \right\rbrace. \] 
To any $\D \subset [n-1]$, we associate the \emph{fundamental quasisymmetric function} 
\[
f_\D = \sum_{\substack{i_1\leq i_2\leq\ldots\leq i_n\\
i_j<i_{j+1}\text{ if } j\in \D(\alpha)}} x_{i_1}x_{i_1}\ldots x_{i_n}.
\]

For example, the word $w=354211$ has descent set $\D(w) = \{2,3,4\}$.
Its shifted Hecke insertion recording tableau is 
\begin{center}
$\rTab(w) = $
\ytableausetup{boxsize=.6cm}
\begin{ytableau}
1 & 2 & 4\pr & 5\pr6\pr\\
\none & 3 \\
\end{ytableau}. 
\end{center}
We see $\D(\rTab(w)) =\{2,3,4\}$ because 2 is strictly above 3, both 3 and $4\pr$ appear, and $4\pr$ is weakly below $5\pr$ but not in the same box. 
The associated fundamental quasisymmetric function is
\begin{align*}
f_{\{2,3,4\}} = &\  x_1^2 x_2 x_3 x_4^2 + x_1^2 x_2 x_3 x_5^2 + \dots + x_2^2 x_5 x_7 x_9^2 + \dots \\
& + x_1^2 x_2 x_3 x_4 x_5 + x_1^2 x_2 x_3 x_4 x_6 + \dots + x_3^2 x_5 x_6 x_8 x_9 + \dots \\
&+ x_1 x_2 x_3 x_4 x_5 x_6 + \dots + x_2 x_3 x_5 x_6 x_9 x_{10} + \dots.
\end{align*}
Note  $\D(\rTab(w))=\D(w)$. 
This is true in general. 

\begin{thm}
\label{thm:descent}
For any word $w=w_1w_2\ldots w_n$,
\[
\D(w) = \D(\rTab(w)).
\]
\end{thm}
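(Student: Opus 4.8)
The plan is to reduce the claim to a purely local statement about two consecutive insertions. Recall from the construction of $\rTab$ that the box (and its priming) recording a letter $w_k$ is assigned at the instant $w_k$ is inserted and is never altered afterward: each later insertion can only enlarge the shifted shape, so no previously recorded box changes its coordinates or its prime. Hence whether $i \in \D(\rTab(w))$ depends only on the two boxes, with their primings, at which $w_i$ and $w_{i+1}$ terminate, and these depend only on inserting $a := w_i$ into $P := \Tab(w_1\cdots w_{i-1})$ and then inserting $b := w_{i+1}$ into $P \insert a$. It therefore suffices to prove the following local statement for every increasing shifted $P$ and all positive integers $a,b$: the recorded boxes of $a$ (label $i$) and $b$ (label $i+1$) satisfy one of the three descent conditions defining $\D$ if and only if $a > b$. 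Applying this to each $i$ gives $\D(w) = \D(\rTab(w))$ with no further induction.

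I would organize the local statement by cases on whether each of $a$ and $b$ terminates in row mode (giving an unprimed label) or in column mode, respectively by failing to enter an empty row (giving a primed label). The cleanest regime is the one in which neither insertion reaches the main diagonal, so both labels are unprimed. There the shifted rules reduce to ordinary row bumping, and the classical Sagan--Worley comparison applies: the termination box of $b$ lies in a strictly lower row than that of $a$ exactly when $a > b$, which is precisely the condition ``$i$ is strictly above $i+1$.'' The $K$-theoretic no-change rules (2) and (4) record at a displaced box, so I would separately check that this displaced box still obeys the same row comparison.

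The genuinely different cases are those in which at least one path turns into a column insertion at the diagonal (or fails into an empty row), producing a prime. The crucial phenomenon---visible already at the pair $(w_3,w_4)=(4,2)$ in the running example, where label $4\pr$ sits in a row strictly \emph{above} label $3$ and yet $i=3$ is a descent---is that the naive row comparison \emph{reverses} once a path crosses the diagonal, so the first and third descent conditions, not the second, encode the correct criterion in the primed regime. Accordingly I would establish: (a) the pattern (label $i$ unprimed, label $i+1$ primed) occurs only when $a>b$, so the ``$i$ and $(i+1)\pr$ both appear'' condition marks exactly these descents; (b) the pattern (label $i$ primed, label $i+1$ unprimed) occurs only when $a\le b$, giving a non-descent consistent with no condition applying; and (c) when both labels are primed, the column-insertion analogue of the bumping comparison shows that $i\pr$ is weakly below $(i+1)\pr$ in distinct boxes exactly when $a>b$ (condition three), while equal letters are recorded in a common box, a non-descent with $a\le b$.

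The main obstacle I expect is precisely this prime-and-diagonal bookkeeping. Because the geometry flips across the diagonal, one cannot invoke a single global monotonicity; instead each of the column-insertion cases (a)--(c), together with the failed-empty-row rule, must be traced directly through the insertion rules (1)--(4) of Section~2. The delicate heart is to prove the ``only when'' claims in (a) and (b)---that a prime appears on exactly the labels dictated by $a>b$ versus $a\le b$, never spuriously---which amounts to showing that a smaller inserted value is always at least as likely to reach the diagonal and turn into a column insertion. Once these monotonicity facts and the column-insertion comparison in (c) are verified, assembling the four priming patterns yields $i \in \D(\rTab(w)) \iff a > b \iff i \in \D(w)$ for every $i$, completing the proof.
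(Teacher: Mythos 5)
Your proposal follows essentially the same route as the paper: reduce the claim to the two boxes recording $w_i$ and $w_{i+1}$, then split into the four priming patterns, showing that (unprimed, primed) can occur only when $w_i > w_{i+1}$, that (primed, unprimed) can occur only when $w_i \le w_{i+1}$, and comparing termination boxes in the two remaining patterns --- exactly the paper's Cases 1--4 in each direction. The path-comparison facts you defer as ``the delicate heart'' are supplied in the paper by observing that the bumping path of the smaller inserted letter stays weakly to the left (hence reaches the diagonal and triggers column insertion weakly earlier, and column bumping paths move weakly upward), by the standard doubling argument reducing the unprimed--unprimed case to Lemma 2 of the Hecke insertion paper of Buch, Kresch, Shimozono, Tamvakis, and Yong, and by a reverse-insertion argument ruling out $t_i = t_{i+1}$ when $w_i > w_{i+1}$.
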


\begin{proof}

Let $t_i$ denote the box where the insertion of $w_i$ terminates, and let $\rTab(t_i)$ denote the labels of box $t_i$ in $\rTab(w)$. 
Then $i \in \rTab(t_i)$ if insertion of $w_i$ ends in row insertion and $i\pr \in \rTab(t_i)$ if insertion of $w_i$ ends in column insertion or failed row insertion into an empty row.

First suppose $i\in \D(w)$, so $w_i > w_{i+1}$. 
Observe that boxes bumped by row insertion caused by $w_i$ will be weakly right of those bumped by row insertion caused by $w_{i+1}$. 
This means that if $w_i$ triggers column insertion, $w_{i+1}$ will trigger column insertion in a row weakly above the row where $w_{i}$ did. 
(If there is a failed row insertion into an empty row, we will say that column insertion was triggered in the last row.)
We consider four cases.

\noindent \textbf{Case 1:} $\rTab(t_{i}) = i\pr$ and $\rTab(t_{i+1}) = i+1$: 

This cannot occur since $w_{i+1}$ triggers row insertion in a row weakly above the row where $w_i$ triggers column insertion.

\noindent \textbf{Case 2:} $\rTab(t_i)=i$ and $\rTab(t_{i+1})=i+1$:
 
In this case, both the insertion of $w_i$ and of $w_{i+1}$ only use row insertion and never fail to insert into an empty row. 
Therefore, using the standard doubling argument for shifted tableaux (Section 7 of \cite{BuchandSamuel2013}), $t_i$ is strictly above $t_{i+1}$ by the corresponding result for Hecke insertion found in Lemma 2 of \cite{HeckeInsertion}. 
Hence $i\in \D(\rTab(w))$.
   
\noindent \textbf{Case 3:} $\rTab(t_i) = i$ and $\rTab(t_{i+1}) = (i+1)\pr$: 

Here, $i\in\D(\rTab(w))$ by definition.

\noindent \textbf{Case 4:} $\rTab(t_i) = i\pr$ and $\rTab(t_{i+1}) = (i+1)\pr$:

If $t_i=t_{i+1}$, then reverse insertion (see~\cite{PPshifted}) shows that $w_i \leq w_{i+1}$, which is a contradiction. 
This is because at each step in reverse insertion, a label is exchanged for a label that is weakly greater. 
We conclude that $i\pr$ and $(i+1)\pr$ will not be in the same box.

Suppose neither insertion ends in failed row insertion into an empty row. 
Since column bumping paths move weakly upward and $w_{i+1}$ triggers column insertion weakly above $w_i$, $w_{i+1}$ will cause a column bump in every column that $w_i$ does. 
Therefore $t_{i+1}$ will be weakly right of $t_i$ and weakly above $t_i$. 
We conclude that $i'$ is weakly below $(i+1)\pr$ but not in the same box, as desired.
 
If insertion of $w_i$ ends in a failed row insertion into an empty row, then $t_i$ is the box at the right of the last row of $\rTab(w_1\ldots w_i)$. 
Since $w_{i+1}$ triggers column insertion weakly above $w_i$, column bumping paths move weakly upward, and $t_i\neq t_{i+1}$, it follows that $t_{i+1}$ is weakly above $t_i$. 
 
If insertion of $w_{i+1}$ ends in failed row insertion into an empty row, then $w_{i+1}$ triggers column insertion in the last row, which means $w_i$ also triggers column insertion in the last row. 
It follows that $t_i=t_{i+1}$, which cannot occur.
Therefore $\D(w) \subseteq \D(\rTab(w))$. 

Next assume $i \not\in \D(w)$ so $w_i \leq w_{i+1}$. 
We consider the same four cases. 

\noindent \textbf{Case 1:} $\rTab(t_i)=i$ and $\rTab(t_{i+1})=i+1\pr$: 

Insertion of $w_{i+1}$ causes row bumping weakly to the right of that caused by $w_i$. 
It follows that in the row where $w_{i+1}$ begins column insertion, $w_{i}$ can only row bump the diagonal element and would begin column insertion. 
Therefore this case never occurs.

\noindent \textbf{Case 2:} $\rTab(t_i) = i$ and $\rTab(t_{i+1}) = i+1$: 

Using the standard doubling argument for shifted tableaux (see Section 7 in \cite{BuchandSamuel2013}), this implies that $t_i$ is weakly below $t_{i+1}$ by Lemma 2 in \cite{HeckeInsertion}. 

\noindent \textbf{Case 3:} $\rTab(t_i) = i\pr$ while $\rTab(t_{i+1}) = i+1$: 

In this case, $i \not\in \D(\rTab(w))$ by definition.

\noindent \textbf{Case 4:} $\rTab(t_i) = i\pr$ and $\rTab(t_{i+1}) = (i+1)\pr$: 

Here, both insertions trigger column insertion.     
Suppose neither insertion of $w_i$ nor insertion of $w_{i+1}$ ends in failed row insertion into an empty row. 
Insertion of $w_{i+1}$ will cause column bumping weakly lower than that of $w_i$ and so $t_{i+1}$ must be weakly to the left of $t_i$.
If $t_i\neq t_{i+1}$, then $t_{i+1}$ must be strictly below $t_i$ to maintain strictly increasing columns and weakly increasing rows in $\rTab(w)$.

If insertion of $w_i$ ends in failed row insertion into an empty row, then insertion of $w_{i+1}$ must also. 
This implies $t_i=t_{i+1}$.

If insertion of $w_{i+1}$ ends in failed row insertion into an empty row, then $t_{i+1}$ is in bottom row of $\rTab(w_1\ldots w_{i+1})$, and so either $t_i$ is strictly above $t_{i+1}$ or $t_i=t_{i+1}$.
Therefore, when $w_i \leq w_{i+1}$, we have $i \notin \D(\rTab(w))$.

We conclude that $\D(w) = \D(\rTab(w))$.  
\end{proof}

The remainder of this section is devoted to expressing the $K_\lambda$ as a sum of fundamental quasisymmetric functions. 
We say a monomial $\sigma = x_{s_1}x_{s_2}\ldots x_{s_r}$ \textit{agrees with} $\D \subset [n-1]$ if $s_i \leq s_{i+1}$ with strict inequality when $i \in \D$.
For a standard set-valued shifted tableau $T$, we \textit{relabel  $T$ by $\sigma$} to obtain $T(\sigma)$ by replacing the $i$th smallest letter in $T$ with $s_i$.
Here, $s_i$ is primed if $i$ was.  
For example, given $\sigma = x_1x_3x_5^2x_6x_7$, we have $T$ and $T(\sigma)$ below.
\ytableausetup{boxsize=.7cm}
\[T = \begin{ytableau}
1 2 & 3\pr 4& 6 \\
\none & 5
\end{ytableau}
\hspace{1in}
T(\sigma) = \begin{ytableau}
1 3 & 5\pr 5& 7 \\
\none & 6
\end{ytableau}
\]

\begin{lem}\label{otherdirection}
Let $T$ be a standard set-valued shifted tableau and let $\sigma = x_{s_1}x_{s_2}\ldots x_{s_r}$ agree with $\D(T)$.  
Then $T(\sigma)$ is a weak set-valued shifted tableau.  
\end{lem}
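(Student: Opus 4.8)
The plan is to verify that $T(\sigma)$ satisfies all five conditions in Definition~\ref{wsvst} of a weak set-valued shifted tableau, using the fact that $\sigma$ agrees with $\D(T)$ together with the structure of the descent set $\D(T)$ for a standard set-valued shifted tableau. Since $T$ is standard, its entries are exactly $1', 1, 2', 2, \ldots, n', n$ (each appearing once, primed or unprimed), and relabeling replaces the $i$th smallest entry by $s_i$ while preserving its primed/unprimed status. The key observation is that $\sigma$ agreeing with $\D(T)$ means $s_i \le s_{i+1}$ always, with $s_i < s_{i+1}$ precisely when $i \in \D(T)$.

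First I would dispose of the ordering conditions (1) and (2). Consider any two entries $p < q$ in $T$ (in the total order $1' < 1 < 2' < 2 < \cdots$) that occupy horizontally or vertically adjacent boxes in the relevant sense. Because relabeling is weakly order-preserving, $T(\sigma)$ automatically inherits the weak inequalities; the only danger is that a \emph{strict} inequality required by Definition~\ref{wsvst} could collapse to equality. The heart of the matter is therefore to show that whenever the definition of a \emph{standard} set-valued shifted tableau forces a strict jump in the labels $i \mapsto i+1$, the descent set $\D(T)$ records a descent at $i$, so that $\sigma$ keeps $s_i < s_{i+1}$. This is exactly where the three clauses defining $\D(T)$ do the work: the clause ``$i$ is strictly above $i+1$'' handles the vertical strictness forcing columns to stay strict, the clause ``both $i$ and $(i+1)'$ appear'' handles the transition where an unprimed $i$ and a primed $i+1$ would otherwise be allowed to coincide in a box, and the clause ``$i'$ weakly below $(i+1)'$ but not in the same box'' controls the primed entries. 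I would go through each adjacency (same box, box to the right, box below) and check that any required strict separation in $T$ corresponds to a descent, hence is preserved.

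Next I would handle the repetition conditions (4) and (5): each unprimed integer appears at most once per column, each primed integer at most once per row. Here the point is that two distinct entries of $T$ that relabel to the same value $s_i = s_{i+1}$ must have consecutive ranks $i, i+1$ with $i \notin \D(T)$, and they must carry the same primed/unprimed status to create a genuine violation. I would argue that if two such entries were in the same column (for an unprimed value) or the same row (for a primed value), then the relevant descent-set clause would have placed $i$ in $\D(T)$ — contradicting $i \notin \D(T)$ — so no violation can arise. Condition (3), no primed entries on the diagonal, is immediate since relabeling preserves the primed/unprimed attribute and positions, and $T$ already has no primed diagonal entries.

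The main obstacle will be the careful bookkeeping in conditions (1), (2), (4), and (5) for the \emph{primed} entries, since the ordering $1' < 1 < 2' < 2 < \cdots$ interleaves primed and unprimed values and the descent-set clauses are phrased asymmetrically between them. In particular, verifying that two consecutive ranks collapsing to a common value $s_i = s_{i+1}$ can never simultaneously violate a strictness-in-position requirement demands matching each geometric configuration in $T$ against exactly one of the three defining clauses of $\D(T)$, and confirming the clauses are exhaustive for the strict cases. I expect this case analysis — rather than any deep structural input — to be the bulk of the argument, and I would organize it by first reducing to consecutive ranks $i, i+1$ (since only these can collapse under a weakly increasing $\sigma$) and then tabulating the possible relative positions of $i$ (or $i'$) and $i+1$ (or $(i+1)'$) in $T$.
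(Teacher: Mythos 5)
Your proposal is correct and follows essentially the same route as the paper: verify the five conditions of the weak set-valued definition, with (1)--(2) following from weak order preservation, (3) immediate, and (4)--(5) handled by matching same-column unprimed (resp.\ same-row primed) occurrences of consecutive ranks against the clauses of $\D(T)$ to force $s_i < s_{i+1}$. The only difference is that you spend effort worrying about strict inequalities in conditions (1) and (2), which the weak set-valued definition does not actually require, so that part of your case analysis is vacuous; the real content is exactly where you locate it, in (4) and (5).
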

\begin{proof}
We verify that $T(\sigma)$ satisfied the five conditions in Definition~\ref{wsvst} for weak set-valued shifted tableaux.

First, observe that we had a strictly increasing tableau before and we replaced the alphabet with one that is weakly increasing.  Thus both the rows and columns will be weakly increasing, so $T(\sigma)$ satisfies conditions (1) and (2).  

Since there are no primed entries on the main diagonal of $T$ by definition, there will be no primed entries on the main diagonal of $T(\sigma)$.
Therefore, $T(\sigma)$ satisfied condition (3).  

Note that if $i$ appears in the same column as $i{+}1$ but not in the same box, then $i \in \D(T)$.
Therefore, $s_i < s_{i+1}$, so $T(\sigma)$ contains the unprimed value $s_i$ in at most one box of each column.
Similarly, if $i\pr$ appears in the same row as $i{+}1\pr$ but not in the same box, then $i \in \D(T)$.
Therefore, $s_i < s_{i+1}$, so $T(\sigma)$ contains the primed value $s_i$ in at most one box of each row.
We conclude that $T(\sigma)$ satisfies conditions (4) and (5), and hence is a shifted weak set-valued tableau.  
\end{proof}

We define the \textit{standardization} of a weak set-valued tableau $T$, denoted $\st(T)$, to be the refinement of the order of entries of $T$ given by reading each occurrence of $k$ in $T$ from left to right and each occurrence of $k\pr$ in $T$ from top to bottom, using the total order $(1\pr<1<2\pr<2<\cdots )$.  Notice that $\st(T)$ will be a standard set-valued shifted tableau.
For example, we have
\[\begin{array}{l r}
T = 
\ytableausetup{mathmode, boxsize=2.3em}
\begin{ytableau}
\sss 1 & \sss 2 \ 3' &\sss  4 \ 5 &\sss 6'\ 7'\ 7 &\sss 9 \ 10 \\
\none & \sss 4 \ 4 &\sss  6 \ 7 &\sss  8'\\
\none & \none &\sss 8\\
\end{ytableau}&
\quad 
\st(T) = 
\ytableausetup{mathmode, boxsize=2.3em}
\begin{ytableau}
\sss 1 &\sss 2 \ 3' &\sss 6 \ 7 &\sss 8' 10' 12 &\sss 15 \ 16 \\
\none &\sss 4 \ 5 &\sss 9 \ 11 &\sss 13'\\
\none & \none &\sss 14\\
\end{ytableau} 
\end{array}\]

We can now write $K_\lambda$ as a sum of fundamental quasisymmetric functions.
\begin{thm}\label{kisgood}
For any fixed increasing shifted tableau $T$ of shape $\lambda$,
\[K_\lambda = \sum_{\Tab(w) = T}f_{\D(w)}.\]
\end{thm}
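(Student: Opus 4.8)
The plan is to transport the sum on the right-hand side through the shifted Hecke correspondence and then match it term-by-term with the defining sum for $\symfcn$.

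First I would use Theorem~\ref{wordbijection} to rewrite the index set. For the fixed increasing shifted tableau $T$ of shape $\lambda$, the words $w$ with $\Tab(w) = T$ are parametrized by their recording tableaux $\rTab(w)$, and as $w$ ranges over all such words this recording tableau ranges over \emph{all} standard set-valued shifted tableaux $Q$ of shape $\lambda$. By Theorem~\ref{thm:descent}, $\D(w) = \D(\rTab(w)) = \D(Q)$, so
\[
\sum_{\Tab(w) = T} f_{\D(w)} = \sum_{Q} f_{\D(Q)},
\]
the right-hand sum being over standard set-valued shifted tableaux $Q$ of shape $\lambda$. In particular this expression does not depend on the chosen $T$, only on $\lambda$; it remains to identify it with $\symfcn$.

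Next I would expand each fundamental quasisymmetric function into monomials, writing $f_{\D(Q)} = \sum_\sigma x^\sigma$ where $\sigma = x_{s_1}\cdots x_{s_n}$ runs over monomials agreeing with $\D(Q)$. The goal then becomes a weight-preserving bijection between pairs $(Q,\sigma)$, with $Q$ standard set-valued shifted of shape $\lambda$ and $\sigma$ agreeing with $\D(Q)$, and weak set-valued shifted tableaux $W$ of shape $\lambda$ (Definition~\ref{wsvst}); matching weights $x^{Q(\sigma)} = x^\sigma$ would then give $\sum_Q f_{\D(Q)} = \sum_W x^W = \symfcn$ by Definition~\ref{k-lambda defn}. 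The forward map $(Q,\sigma) \mapsto Q(\sigma)$ is precisely the relabeling of Lemma~\ref{otherdirection}, which guarantees that $Q(\sigma)$ is a valid weak set-valued shifted tableau and manifestly preserves the monomial weight. The candidate inverse sends a weak set-valued tableau $W$ to $(\st(W),\sigma_W)$, where $\st(W)$ is the standardization and $\sigma_W = x_{s_1}\cdots x_{s_n}$ records, in standardized order, the underlying integer of each entry of $W$; since standardization reads entries in weakly increasing value order, $\sigma_W$ is automatically weakly increasing.

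The main obstacle is verifying that this inverse lands in valid pairs, i.e.\ that $\sigma_W$ genuinely agrees with $\D(\st(W))$; this is the dual of Lemma~\ref{otherdirection}. Concretely, I must show $s_i < s_{i+1}$ whenever $i \in \D(\st(W))$, which I would check against the three clauses defining $\D$ for standard set-valued tableaux together with the tie-breaking rules of standardization (primed entries read top-to-bottom, unprimed entries read left-to-right). The contrapositive is the efficient route: if ranks $i$ and $i+1$ share the same underlying integer, they are consecutive occurrences of one value in $W$, and the priming and reading conventions place them outside every descent clause---either because the required primed or unprimed value is simply absent, or because the geometric condition (primed strictly above, unprimed strictly left) fails---so $i \notin \D(\st(W))$. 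Once this agreement is established, checking that $\st(Q(\sigma)) = Q$ with $\sigma_{Q(\sigma)} = \sigma$, and that $\st(W)(\sigma_W) = W$, is routine, since relabeling and standardization are mutually inverse operations compatible with the total order $1'<1<2'<2<\cdots$ on entries; this completes the bijection and hence the proof.
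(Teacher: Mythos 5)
Your proposal is correct and follows essentially the same route as the paper: both reduce the claim, via Theorem~\ref{wordbijection} and Theorem~\ref{thm:descent}, to a weight-preserving correspondence between weak set-valued shifted tableaux $W$ of shape $\lambda$ and pairs consisting of a standard set-valued shifted tableau $\st(W)$ together with a monomial agreeing with its descent set, with Lemma~\ref{otherdirection} supplying one direction and a clause-by-clause check of the descent definition (which you phrase contrapositively, the paper directly) supplying the other. The only difference is presentational: you spell out the bijection on pairs $(Q,\sigma)$ that the paper leaves implicit.
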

\begin{proof}
Let $W$ be a weak set-valued shifted tableau of shape $\lambda$ with entries $s_1 \leq s_2 \leq \ldots \leq s_n$, some of which may be primed.
By Theorem~\ref{wordbijection}, there is a unique word $w$ such that $\Tab(w) = T$ and  $\rTab(w) = \st(W)$. 
We will show that $x^W$ agrees with $\D(w) = \D(\st(W))$.
By Lemma~\ref{otherdirection}, every $\sigma$ that agrees with $\D(w)$ corresponds to some $W$, so this would complete our proof.

Let $j$ be strictly above $j+1$ in $\st(W)$. 
If $s_j = s_{j+1}$, by the weakly increasing property, we see they must be in the same column.
This violates condition (4) in the definition of weak set-valued tableaux, so $s_j < s_{j+1}$.

Next assume $j$ and $(j+1)\pr$ both appear in $\st(W)$. 
By the definition of standardization, $s_j$ and $s_{j+1}$ cannot be the same number, as primed entries of the same value are standardized first.
Therefore, $\sigma_j<\sigma_{j+1}$.

Finally, let $(j+1)\pr$ be weakly above $j\pr$ and not in the same box.
If $s_j=s_{j+1}$, they cannot appear on the same row of $W$, as this would violate condition (5) in the definition of weak set-valued shifted tableaux. 
Moreover, if $s_{j+1}\pr$ were strictly above $s_j\pr$, it would have to be strictly smaller after standardization, which cannot occur.
Therefore, $s_j < s_{j+1}$, which completes our proof.

\end{proof}

\subsection{The symmetry of $K_\lambda$}
From Proposition~\ref{prop:Ikeda} and~\cite[Theorem 9.1]{ikeda2013k}, we can conclude that the weak shifted stable Grothendieck polynomial $K_\lambda$ is symmetric.
However, using shifted Hecke insertion, we provide a direct proof.
This also shows that $K_\lambda$ is a sum of stable Grothendieck polynomials.
Moreover, by Proposition~\ref{prop:Ikeda}, we obtain a new proof of symmetry for the $GP_\lambda$.
To our mind, this proof of symmetry is easier than the Ikeda-Naruse proof, as might be expected since their result follows as a consequence of symmetry for the more general factorial $K$-theoretic Schur $P$-functions.

A \emph{set-valued tableau} $T$ of ordinary (unshifted) shape satisfies conditions (1), (2) and (4) of Definition~\ref{def:setvaluedshifted} and has no primed entries.
We use these tableaux to define a signless version of the stable Grothendieck polynomials---which we still refer to as ``stable Grothendieck polynomials'' for simplicity---as seen in \cite{lam2007combinatorial}. The standard signed version is due to Buch in \cite{buch2002littlewood}. 
The \emph{stable Grothendieck polynomial} of shape $\mu$ is
\[
G_\mu = \sum_{T} x^T,
\]
where the sum is over set-valued tableaux of shape $\mu$.
Stable Grothendieck polynomials are symmetric functions.

The original definition of stable Grothendieck polynomials, due to Fomin and Kirillov~\cite{fomin1994grothendieck}, can be recovered from Buch's defnition using Hecke insertion, which maps a word $w$ to a pair of tableaux $(P_K(w),Q_K(w))$ of the same ordinary shape where $P_K$ is increasing and $Q_K$ is set-valued.
For a description of Hecke insertion, we refer the reader to~\cite{HeckeInsertion}.

\begin{thm}
\label{thm:groth}
For any increasing tableau $T$ of shape $\mu$,
\[
G_\mu = \sum_{P_K(w) = T} f_{\D(w)}.
\]

\end{thm}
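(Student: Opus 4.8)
The plan is to follow the template of the shifted argument in Theorem~\ref{kisgood}, working with ordinary (unshifted) set-valued tableaux in place of weak set-valued shifted tableaux and with Hecke insertion $w \mapsto (P_K(w),Q_K(w))$ of~\cite{HeckeInsertion} in place of shifted Hecke insertion. Since $G_\mu = \sum_W x^W$ ranges over set-valued tableaux $W$ of shape $\mu$, while expanding each fundamental quasisymmetric function gives $\sum_{P_K(w)=T} f_{\D(w)} = \sum_{P_K(w)=T}\sum_{\sigma} x^\sigma$, a sum over pairs $(w,\sigma)$ with $P_K(w)=T$ and $\sigma$ a monomial agreeing with $\D(w)$, it suffices to exhibit a monomial-preserving bijection between these two index sets. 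This is the exact unshifted shadow of the correspondence driving the proof of Theorem~\ref{kisgood}.

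To set up the two directions I need the unshifted analogues of the two combinatorial inputs used there. First, define the standardization $\st(W)$ of a set-valued tableau by reading each value from left to right (there are no primes), producing a standard set-valued tableau, and define the descent set $\D(Q)$ of a standard set-valued tableau by specializing the shifted definition with every primed condition removed, so that $i \in \D(Q)$ precisely when $i$ lies in a row strictly above $i+1$ (in particular not in the same box). The first input is the relabeling lemma: if $\sigma$ agrees with $\D(Q)$, then relabeling $Q$ by $\sigma$ yields a valid set-valued tableau $Q(\sigma)$. This is the unshifted specialization of Lemma~\ref{otherdirection}, and its proof is the same but strictly easier, since conditions (3) and (5) of Definition~\ref{def:setvaluedshifted} (no diagonal, no primes) drop out entirely, leaving only the column-repetition check from condition (4). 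The second input is that $x^W$ agrees with $\D(\st(W))$, proven exactly as in Theorem~\ref{kisgood}: equal consecutive standardized values cannot appear in the same column without violating condition (4), forcing strict increase at each descent.

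With these in hand the bijection is immediate. Given a set-valued tableau $W$ of shape $\mu$, the Hecke insertion bijection produces a unique word $w$ with $P_K(w)=T$ and $Q_K(w)=\st(W)$ (both tableaux have shape $\mu$), and $W$ is recovered as the relabeling of $\st(W)$ by its own content monomial $x^W$, which agrees with $\D(\st(W))$; conversely, given $(w,\sigma)$ with $P_K(w)=T$ and $\sigma$ agreeing with $\D(w)$, one sets $W = Q_K(w)(\sigma)$, a valid set-valued tableau with $x^W=x^\sigma$. The hinge tying the two directions together, and the step I expect to be the main obstacle, is the unshifted analogue of Theorem~\ref{thm:descent}, namely $\D(w) = \D(Q_K(w))$ for Hecke insertion; this is what makes ``agrees with $\D(w)$'' and ``agrees with $\D(Q_K(w))$'' interchangeable, so that the two relabeling operations are mutually inverse. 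This equality is considerably simpler to establish than its shifted counterpart worked out above: with no primed entries, no main diagonal, and only Hecke-type row bumping, it reduces to the standard bumping-path comparison controlling whether $i+1$ terminates strictly below $i$, which is the content of Lemma~2 of~\cite{HeckeInsertion} and may be cited directly from the Hecke insertion literature.
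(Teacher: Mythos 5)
The paper offers no proof of Theorem~\ref{thm:groth}: it is imported from the Hecke insertion literature (it is the unshifted statement underlying \cite{HeckeInsertion} and \cite{lam2007combinatorial}, and is the result the paper leans on inside the proof of Theorem~\ref{thm:symmetry}), so there is no internal argument to compare against. Your reconstruction is correct and is exactly the unshifted shadow of the machinery the paper does spell out in the shifted case: the weight-preserving bijection between set-valued tableaux of shape $\mu$ and pairs $(w,\sigma)$ with $P_K(w)=T$ and $\sigma$ agreeing with $\D(w)$, assembled from the relabeling lemma (the primeless specialization of Lemma~\ref{otherdirection}), the agreement of $x^W$ with $\D(\st(W))$, and descent preservation for the recording tableau. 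The one point I would sharpen is your final step: Lemma~2 of \cite{HeckeInsertion} is only the positional comparison of termination boxes for consecutive insertions, not the identity $\D(w)=\D(Q_K(w))$ itself; you still need both directions ($w_i>w_{i+1}$ forces $i+1$ strictly below $i$, and $w_i\le w_{i+1}$ forces it weakly above or in the same box, the latter being harmless since same-box pairs are never descents), which is the unshifted collapse of Cases~1--4 in the proof of Theorem~\ref{thm:descent}. With that assembly made explicit, or with the descent identity cited directly from \cite{HeckeInsertion}, the argument is complete.
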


Hecke insertion respects the \emph{$K$-Knuth relations}, which are relations (1-4) of Definition~\ref{def:wkknuth}.
We say two words $v$ and $w$ are \emph{$K$-Knuth equivalent}, denoted $v \equiv w$, if $v$ can be obtained from $w$ by a finite sequence of $K$-Knuth equivalence relations.
\begin{thm}[\cite{BuchandSamuel2013}]
\label{thm:kknuth}
For words $v$ and $w$, if $P_K(v) = P_K(w)$, then $v \equiv w$.

\end{thm}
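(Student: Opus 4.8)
The plan is to reduce the theorem to a single claim and then mirror, in the unshifted setting, the argument by which Corollary~\ref{insertionimplieskknuth} was deduced above. The claim is that every word is $K$-Knuth equivalent to the reading word of its own Hecke insertion tableau, i.e. $w \equiv \row(P_K(w))$. Granting this, the theorem is immediate: if $P_K(v) = P_K(w)$ then $\row(P_K(v)) = \row(P_K(w))$, so by symmetry and transitivity of $\equiv$ we get $v \equiv \row(P_K(v)) = \row(P_K(w)) \equiv w$, hence $v \equiv w$.

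To establish the claim I would assemble two ingredients, each the ordinary-shape counterpart of a tool already in hand. The first is the theorem of Thomas and Yong~\cite{ThomasYong} whose proof structure is imitated by Theorem~\ref{jdtgivesinsertion}: Hecke insertion computes $K$-theoretic rectification, so that $P_K(w)$ is obtained by $K$-rectifying the antidiagonal tableau $A_w$ whose reading word is $w$. The second is the ordinary specialization of Buch and Samuel's equivalence~\cite[Theorem 7.8]{BuchandSamuel2013}: a single $K$-theoretic jeu de taquin slide leaves the $K$-Knuth class of the reading word unchanged, so jeu de taquin equivalent increasing tableaux have $K$-Knuth equivalent reading words. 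Note that only relations (1)--(4) of Definition~\ref{def:wkknuth} arise here, since the relation (5) commutation is a feature of the shifted diagonal and plays no role in the ordinary case; this is exactly the relation defining $\equiv$.

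Combining the two ingredients finishes the claim: the antidiagonal tableau $A_w$ and its rectification $P_K(w)$ are jeu de taquin equivalent, and therefore $w = \row(A_w) \equiv \row(P_K(w))$. The main obstacle is the first ingredient, namely verifying that Hecke insertion genuinely realizes $K$-rectification. This demands a case analysis tracking how an inserted letter interacts with the switch operators as its bump propagates downward through the rows, directly parallel to the four-case argument proving Theorem~\ref{jdtgivesinsertion} but free of the diagonal and column-insertion subtleties of the shifted setting. Since this is precisely the content of Thomas and Yong's result, I would cite it rather than reprove it, after which the remaining steps are formal manipulations of the equivalence relation.
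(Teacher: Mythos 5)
The paper does not actually prove this statement: it is imported verbatim from Buch and Samuel, with the citation serving as the entire justification, so there is no internal argument to measure yours against. On its own terms, your reduction is sound. Once you know $w \equiv \row(P_K(w))$ for every word, the theorem follows by symmetry and transitivity, and that claim does follow from the two ingredients you name: Thomas and Yong's theorem that Hecke insertion computes the $K$-rectification of the antidiagonal tableau $A_w$ (the unshifted prototype of Theorem~\ref{jdtgivesinsertion}, whose proof the paper explicitly models on it), together with the fact that a $K$-jdt slide preserves the $K$-Knuth class of the reading word of an increasing tableau. You only need the latter in the direction ``jdt equivalent $\Rightarrow$ $K$-Knuth equivalent reading words,'' which is proved by analyzing a single slide and does not pass through Hecke insertion, so there is no circularity in citing it. This is exactly the two-step derivation the paper itself uses in the shifted setting to obtain Corollary~\ref{insertionimplieskknuth} from Theorem~\ref{jdtgivesinsertion} and \cite[Theorem~7.8]{BuchandSamuel2013}, transplanted to ordinary shapes. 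Two small corrections: the unshifted jdt--Knuth statement is not literally a ``specialization'' of \cite[Theorem~7.8]{BuchandSamuel2013}, which concerns shifted tableaux and the weak relations including (5); it is a separate theorem of Buch and Samuel for straight shapes and should be cited as such. And since both of your ingredients are themselves citations, what you have is a clean reduction to two known results rather than a self-contained proof --- more informative than the paper's bare citation, but you should present it as a reduction.
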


We are now prepared to show the symmetry of weak shifted stable Grothendieck polynomials.
\begin{thm}
\label{thm:symmetry}
For any shifted shape $\lambda$, $K_\lambda$ is symmetric.

\end{thm}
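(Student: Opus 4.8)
The plan is to fix, once and for all, an increasing shifted tableau $T$ of shape $\lambda$ and invoke Theorem~\ref{kisgood} to write $K_\lambda = \sum_{\Tab(w) = T} f_{\D(w)}$. On the ordinary side, Theorem~\ref{thm:groth} says that summing $f_{\D(w)}$ over a single Hecke fiber $\{w : P_K(w) = U\}$ produces the symmetric function $G_{\mathrm{sh}(U)}$ (the stable Grothendieck polynomial of the shape of $U$). It therefore suffices to prove the structural claim that the shifted Hecke fiber $S_T := \{w : \Tab(w) = T\}$ is a disjoint union of ordinary Hecke fibers. Granting this, $K_\lambda = \sum_U G_{\mathrm{sh}(U)}$, summed over the distinct $U$ occurring as $P_K(w)$ for some $w \in S_T$. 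The sum is finite: every $w \in S_T$ is supported on the alphabet of entries of $T$, hence so is each $U = P_K(w)$, and Lemma~\ref{finitetableau} bounds the number of such $U$. As a nonnegative sum of symmetric functions $K_\lambda$ is then symmetric, which also exhibits it as a sum of stable Grothendieck polynomials and, through Proposition~\ref{prop:Ikeda}, reproves symmetry of $GP_\lambda$.

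The entire content thus lies in the claim that $S_T$ is saturated under ordinary Hecke insertion, i.e. that $\Tab(w)$ depends only on $P_K(w)$: if $P_K(w) = P_K(w')$ then $\Tab(w) = \Tab(w')$. I expect this to be the main obstacle. It cannot be reduced to an equivalence argument: by Theorem~\ref{thm:kknuth} equal $P_K$ only yields $w \equiv w'$, hence $w\ \ksw\ w'$, and weak $K$-Knuth equivalence does not determine $\Tab$ (Remark~\ref{notURT}); indeed the idempotent relation (1), which lies among the ordinary $K$-Knuth moves, already fails to preserve $\Tab$, as $\Tab(12453) \neq \Tab(124533)$ shows. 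So one must genuinely exploit that sharing a full ordinary insertion tableau is strictly stronger than being $K$-Knuth equivalent.

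To establish the claim I would pass to the jeu de taquin side. By Theorem~\ref{jdtgivesinsertion}, $\Tab(w)$ is the shifted superstandard $K$-rectification of the antidiagonal tableau $T_w$, and by the unshifted analogue of that theorem (due to Thomas and Yong) $P_K(w)$ is obtained by ordinary $K$-rectification of the same $T_w$. Two words with a common ordinary insertion tableau differ only in their ordinary recording data, so reverse ordinary Hecke insertion lets me connect any $w, w'$ in a common ordinary fiber by local moves that fix $P_K$. I would then show that each such move leaves the shifted $K$-rectification of $T_w$ unchanged, using the commutation of switch operators (Lemma~\ref{switchcommute}) and the freedom, afforded by Corollary~\ref{switchsequence}, to reorder switches within a viable switch sequence so as to localize the effect of the move. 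The interval-restriction Lemma~\ref{intervaltableau} should reduce the verification to a bounded alphabet, leaving a finite case analysis.

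The delicate point, which I expect to absorb most of the effort, is controlling the diagonal: the only essential difference between shifted and ordinary insertion is the switch to column insertion when a box on the main diagonal is bumped. Consequently the proof that $P_K$-preserving moves descend to equalities of shifted rectifications must track precisely when and where column insertion is triggered, and confirm that this data is itself a function of $P_K$. Once that bookkeeping is in place, the disjoint-union decomposition of $S_T$ follows and the symmetry of $K_\lambda$ is immediate.
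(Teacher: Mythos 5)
Your high-level plan is the paper's plan: write $K_\lambda$ via Theorem~\ref{kisgood} as $\sum_{\Tab(w)=T} f_{\D(w)}$, show the shifted Hecke fiber $S_T$ is a disjoint union of ordinary Hecke fibers, and apply Theorem~\ref{thm:groth} to each piece. But there is a genuine gap in how you propose to get the fiber decomposition. You fix an \emph{arbitrary} increasing shifted tableau $T$ of shape $\lambda$ and then need the global claim that $\Tab(w)$ is a function of $P_K(w)$. You correctly identify this as the crux and correctly observe it cannot follow from equivalence-class reasoning alone, but your proposed proof of it---connecting words in a common ordinary Hecke fiber by unspecified ``local moves that fix $P_K$'' coming from reverse insertion, then checking each move preserves the shifted superstandard rectification via switch commutation, with the diagonal/column-insertion bookkeeping deferred---is a program, not an argument. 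Nothing in the paper supplies this statement, and it is a substantially stronger assertion than what the theorem needs; as written, the proof does not close.

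The paper avoids this entirely by choosing $T$ to be a \emph{unique rectification target}, which exists for every shifted shape by Theorem~\ref{thm:urt}. For a URT, $S_T = \mathcal{W}_T$ is exactly the full weak $K$-Knuth class of $\row(T)$ (Corollary~\ref{insertionimplieskknuth} gives one containment, the URT property the other). The saturation you are after is then immediate: if $P_K(w)=P_K(w')$ with $w'\in\mathcal{W}_T$, then $w\equiv w'$ by Theorem~\ref{thm:kknuth}, hence $w\ \ksw\ w'$ since the $K$-Knuth relations are among the weak $K$-Knuth relations, hence $\Tab(w)=T$ because $T$ is a URT. So $\mathcal{W}_T=\bigcup_{S\in\mathcal{S}}\{w: P_K(w)=S\}$ with $\mathcal{S}=\{P_K(w): w\in\mathcal{W}_T\}$ finite, and $K_\lambda=\sum_{S\in\mathcal{S}}G_{\mu_S}$ follows exactly as in your last step. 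If you insert the URT hypothesis (and cite Theorem~\ref{thm:urt} for existence), your argument becomes the paper's proof and the entire jeu-de-taquin program in your third and fourth paragraphs can be discarded; without it, you would first have to establish that ordinary Hecke fibers refine shifted Hecke fibers for non-URT tableaux as well, which is not proved here and which your sketch does not deliver.
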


\begin{proof}
Let $T$ be a URT of shifted shape $\lambda$ and $\mw_T = \{w : \Tab(w) = T\}$.
Since $T$ is a URT, $\mw_T$ is a single weak $K$-Knuth class.
Let $\mathcal{S} = \{P_K(w): w\in \mw_T\}.$
Note $\mathcal{S}$ is finite since it is comprised of increasing tableaux whose largest entry is bounded.
Since the weak $K$-Knuth relations are a refinement of the $K$-Knuth relations, we see by Theorem~\ref{thm:kknuth} that 
\[
\mw_T = \bigcup_{S \in \mathcal{S}} \{P_K(w) = S\}.
\]
Therefore, by Theorems~\ref{kisgood}~and~\ref{thm:groth} we see
\[
K_\lambda = \sum_{w \in \mw_T} f_{\D(w)} = \sum_{P_K(w) \in \mathcal{S}} f_{\D(w)} = \sum_{S \in \mathcal{S}} G_{\mu_S},
\]
where $\mu_S$ is the shape of $S$.
Since stable Grothendick polynomials are symmetric, we see $K_\lambda$ is symmetric.
\end{proof}

\begin{cor}
For any shifted shape $\lambda$, $GP_\lambda$ is symmetric.

\end{cor}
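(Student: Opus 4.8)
The plan is to invert the substitution appearing in Proposition~\ref{prop:Ikeda} and transport the symmetry of $K_\lambda$ established in Theorem~\ref{thm:symmetry} over to $GP_\lambda$. Write $\phi$ for the variable-wise substitution $\phi(x_i) = \frac{-x_i}{1-x_i}$, so that Proposition~\ref{prop:Ikeda} reads $K_\lambda(x_1,x_2,\ldots) = (-1)^{|\lambda|} GP_\lambda(\phi(x_1),\phi(x_2),\ldots)$. The whole argument rests on the observation that $\phi$ is an \emph{involution} at the level of formal power series in a single variable: a direct computation gives
\[
\phi(\phi(x)) = \frac{-\,\frac{-x}{1-x}}{1 - \frac{-x}{1-x}} = \frac{\frac{x}{1-x}}{\frac{1}{1-x}} = x,
\]
so that $\phi(\phi(x_i)) = x_i$ for every $i$.

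Given this, I would apply the substitution $x_i \mapsto \phi(x_i)$ to both sides of the identity of Proposition~\ref{prop:Ikeda}. The left-hand side becomes $K_\lambda(\phi(x_1),\phi(x_2),\ldots)$, while the right-hand side becomes $(-1)^{|\lambda|} GP_\lambda(\phi(\phi(x_1)),\ldots) = (-1)^{|\lambda|} GP_\lambda(x_1,x_2,\ldots)$ by the involution property. Multiplying through by $(-1)^{|\lambda|}$ and using $\bigl((-1)^{|\lambda|}\bigr)^2 = 1$ then yields
\[
GP_\lambda(x_1,x_2,\ldots) = (-1)^{|\lambda|}\, K_\lambda(\phi(x_1),\phi(x_2),\ldots).
\]

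It then remains to observe that this presentation of $GP_\lambda$ is manifestly symmetric. Since $\phi$ is applied uniformly to each variable, it commutes with any permutation of the $x_i$: permuting the variables inside $K_\lambda(\phi(x_1),\phi(x_2),\ldots)$ merely permutes the arguments $\phi(x_i)$ of $K_\lambda$ among themselves. As $K_\lambda$ is symmetric by Theorem~\ref{thm:symmetry}, this leaves the expression unchanged, and hence $GP_\lambda$ is invariant under permutations of variables.

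The one point requiring genuine care, and the main obstacle, is not the formal algebra but the well-definedness of the substitution $x_i \mapsto \phi(x_i)$ as an operation on the relevant power series, together with the legitimacy of applying it to both sides of Proposition~\ref{prop:Ikeda}. Because $\phi(x_i) = -x_i - x_i^2 - \cdots$ has vanishing constant term, substituting it into $K_\lambda$ produces a well-defined formal power series with each coefficient a finite sum; this is exactly the bookkeeping already carried out in the proof of Proposition~\ref{prop:Ikeda}, now run in reverse through the involution. With that justification in place, the displayed manipulations are valid and the corollary follows.
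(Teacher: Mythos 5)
Your argument is correct and is essentially the paper's own proof: the paper simply states that the corollary follows from Proposition~\ref{prop:Ikeda} and Theorem~\ref{thm:symmetry}, and the deduction it has in mind is exactly the one you spell out. Your explicit verification that $x \mapsto \frac{-x}{1-x}$ is an involution on power series with zero constant term, so that the substitution can be inverted and commutes with permutations of the variables, is the detail the paper leaves implicit, and you handle it correctly.
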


\begin{proof}
The result follows from Proposition~\ref{prop:Ikeda} and Theorem~\ref{thm:symmetry}.
\end{proof}

Explicit decompositions of $K_\lambda$ in terms of $G_\mu$ can be derived as a consequence of forthcoming work by Hamaker, Marberg, Pawlowski, and Young~\cite{hamakerinv2,hamakerinv4}.

\section{Shifted $K$-Poirier-Reutenauer Algebra and a Littlewood-Richardson rule}
\subsection{Poirier-Reutenauer type algebras} In \cite{poirier1995algebres}, Poirier and Reutenauer define a Hopf algebra spanned by the set of standard Young tableaux.
Jing and Li developed a shifted version~\cite{JLshifted}, and Patrias and Pylyavskyy developed a $K$-theoretic analogue~\cite{PPK}. 
In this section, we combine these approaches to introduce a shifted $K$-theoretic analogue.

We say a word $h$ is \textit{initial} if the letters appearing in it are exactly the numbers in $[k]$ for some positive integer $k$. 
For example, the words 54321 and 211345 are initial, but 2344 is not.

Let $[[h]]$ denote the formal sum of all words in the weak $K$-Knuth equivalence class of an initial word $h$:
\[
[[h]] = \sum_{h\ \ksw \ w} w.
\]
This is an infinite sum; however, the number of terms in $[[h]]$ of each length is finite.
For example,
\[
[[213]] = 213 + 231 + 123 + 321 + 3221 + 3321 + 3211+ 32111 + \cdots .
\]
Let $SKPR$ denote the vector space over $\mathbb{R}$ spanned by all sums of the form $[[h]]$ for some initial word $h$.  
We will define a product on $SKPR$ to make it an algebra. 
To do so, we need the following lemma. 
\begin{lem}\label{finitesumoftableau}
We have 
\[[[h]] = \sum_T\left( \sum_{\Tab(w) = T} w \right),
\]
where the sum is over all increasing shifted tableaux $T$ whose reading word is in the weak $K$-Knuth equivalence class of $h$.  
\end{lem}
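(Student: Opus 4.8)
The plan is to reindex the defining sum $[[h]] = \sum_{h\ \ksw\ w} w$ by grouping the words $w$ according to their insertion tableau $\Tab(w)$. The whole lemma reduces to one claim: \textbf{Claim:} for every word $w$ one has $w\ \ksw\ \row(\Tab(w))$. Granting this, each $w$ with $h\ \ksw\ w$ falls into the unique fiber indexed by $T=\Tab(w)$, whose reading word is automatically $K$-Knuth equivalent to $h$, and conversely every word in such a fiber lies in the class of $h$.

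First I would prove the Claim using the two structural results already established. Form the antidiagonal tableau $T_w$, whose reading word is $w$ by construction; since each of its rows and columns contains a single box, $T_w$ is (vacuously) an increasing shifted skew tableau. By Theorem~\ref{jdtgivesinsertion}, $\Tab(w)$ is the superstandard $K$-rectification of $T_w$, and rectification is by definition a sequence of forward shifted $K$-jdt slides, so $T_w$ and $\Tab(w)$ are jeu de taquin equivalent. Applying \cite[Theorem~7.8]{BuchandSamuel2013} then gives $\row(T_w)\ \ksw\ \row(\Tab(w))$, that is $w\ \ksw\ \row(\Tab(w))$, proving the Claim.

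Next, using the Claim, I would show that the condition $h\ \ksw\ w$ is equivalent to ``$\Tab(w)=T$ for some increasing shifted tableau $T$ with $\row(T)\ \ksw\ h$.'' Indeed, if $h\ \ksw\ w$, set $T=\Tab(w)$; then $\row(T)=\row(\Tab(w))\ \ksw\ w\ \ksw\ h$. Conversely, if $\Tab(w)=T$ with $\row(T)\ \ksw\ h$, then $w\ \ksw\ \row(\Tab(w))=\row(T)\ \ksw\ h$, so $w$ lies in the class of $h$. Since every word has a unique insertion tableau, the fibers $\{w:\Tab(w)=T\}$ are pairwise disjoint, and by the equivalence just shown their union over all $T$ with $\row(T)\ \ksw\ h$ is exactly $\{w:h\ \ksw\ w\}$; any $T$ in the index set that is not actually achieved simply contributes an empty inner sum. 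Summing $w$ over this partition yields the stated identity.

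Finally I would address well-definedness of the rearrangement degree by degree: none of the relations in Definition~\ref{def:wkknuth} introduces a letter absent from $h$, so every word in $[[h]]$ uses letters from the fixed alphabet of $h$, and hence for each fixed length $n$ both sides are finite sums of words, making the regrouping a legitimate finite rearrangement in each degree. The only substantive obstacle is the Claim, and within it the point requiring care is that Theorem~\ref{jdtgivesinsertion} and \cite[Theorem~7.8]{BuchandSamuel2013} genuinely apply to the skew antidiagonal tableau $T_w$ (which is why its vacuous increasingness must be noted); once the Claim is in hand, the remaining bookkeeping is routine.
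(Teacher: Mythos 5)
Your proof is correct and follows essentially the same route as the paper: the paper disposes of this lemma by citing Corollary~\ref{insertionimplieskknuth}, whose content is exactly your key Claim that $w\ \ksw\ \row(\Tab(w))$, derived as you do from Theorem~\ref{jdtgivesinsertion} together with Buch--Samuel's Theorem~7.8. You have simply made explicit the reindexing and finiteness bookkeeping that the paper leaves implicit.
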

\begin{proof}
The result follows from Corollary~\ref{insertionimplieskknuth}.
\end{proof}
Note that the set of tableaux we sum over is finite by Lemma~\ref{finitetableau}. 

\subsection{Product structure} 
Let $\shuffle$ denote the usual shuffle product of words and  $w[n]$ be the word obtained from $w$ by increasing each letter by $[n]$. 
For example, if $w=312$, $w[4]=756$. 
For $h$ a word in the alphabet $[n]$ and $h\pr$ a word in the alphabet $[m]$, we define the product in $SKPR$ by
\[
[[h]] \cdot [[h\pr]] = \sum_{w \ksw h, w\pr \ksw h\pr} w \shuffle w\pr[n].
\]
For example
\[[[12]]\cdot[[1]] = [[123]]+[[312]]+[[3123]].
\]
In general, $[[h]] \cdot [[h\pr]]$ can be expressed as a sum of classes.
\begin{thm}\label{productissum}
The product of any two initial words $h$ and $h\pr$ can be written as
\[
[[h]] \cdot [[h\pr]] = \sum_{h\pr\pr}\  [[h\pr\pr]],
\]
where the sum is over some set of initial words $h\pr\pr$.  
\end{thm}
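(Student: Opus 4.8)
The goal is to show that the product $[[h]] \cdot [[h\pr]]$, defined as a sum over shuffles of representatives, decomposes as a disjoint union of full weak $K$-Knuth equivalence classes $[[h\pr\pr]]$. The plan is to show that the set of words appearing in the product (with multiplicity) is a \emph{union of entire weak $K$-Knuth classes, each appearing exactly once}. The central claim I would isolate and prove is the following closure property: if a word $u$ appears in $[[h]] \cdot [[h\pr]]$ and $u \ksw u\pr$, then $u\pr$ also appears in the product, and with the same coefficient. Granting this claim, every word in the support lies in a complete equivalence class contained in the support, so the support partitions into classes; choosing one initial representative $h\pr\pr$ from each class (every class contains an initial word, obtained by standardizing/relabeling a reading word) yields the stated expansion.

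\textbf{Key steps, in order.} First I would observe that every word $u = w \shuffle w\pr[n]$ arising in the product has a natural decomposition: its letters in the interval $[n]$ form a subword weak $K$-Knuth equivalent to $h$, and its letters in $[n{+}1, n{+}m]$, shifted down by $n$, form a subword equivalent to $h\pr$. This is exactly the interval-restriction structure, so I would invoke Lemma \ref{interval}: for any interval $I$, $u \ksw u\pr$ implies $u|_I \ksw u\pr|_I$. Second, I would use Lemma \ref{interval} with $I = [n]$ and $I = [n{+}1, n{+}m]$ to show that the property ``$u|_{[n]} \ksw h$ and $u|_{[n{+}1,n{+}m]}[{-}n] \ksw h\pr$'' is preserved under applying a single weak $K$-Knuth relation to $u$. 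Third, and this is the crux, I must show the converse packaging: any word $u$ whose restriction to $[n]$ is equivalent to $h$ and whose restriction to the upper interval (shifted) is equivalent to $h\pr$ actually \emph{is} a shuffle of such representatives, hence genuinely appears in the product. This identifies the support of the product precisely as $\{u : u|_{[n]} \ksw h,\ u|_{[n+1,n+m]}[{-}n] \ksw h\pr\}$, a set manifestly closed under $\ksw$ by the first two steps.

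\textbf{The coefficient count.} Having identified the support as a $\ksw$-closed set, I would verify that each word appears with coefficient exactly $1$. A word $u$ in the support determines its lower subword $w = u|_{[n]}$ and upper subword $w\pr = u|_{[n+1,n+m]}[{-}n]$ uniquely, and conversely the pair $(w, w\pr)$ together with the positions of the large letters reconstructs $u$ uniquely; since the shuffle product records each interleaving once and distinct representatives give distinct letter-subwords, the multiplicity is $1$. Then the support is a disjoint union of classes $[[h\pr\pr]]$, each counted once, which is the desired identity.

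\textbf{Main obstacle.} The delicate step is the crux (third step above): proving that $\ksw$-closure of the support holds in \emph{both} directions simultaneously, i.e.\ that applying a weak $K$-Knuth relation to $u$ cannot move a letter of $u$ across the boundary between the two intervals in a way that breaks the factorization. Relation (5), which swaps the first two letters unconditionally, and relation (1), which merges or splits repeated letters, require particular care near the interval boundary; Lemma \ref{interval} controls each restriction, but I must check that no relation creates a word whose restrictions fail to remain equivalent to $h$ and $h\pr$ respectively. I expect this to reduce to a finite case analysis over the five relations in Definition \ref{def:wkknuth}, made routine by the interval lemma but requiring attention to the repeated-letter and leading-transposition cases.
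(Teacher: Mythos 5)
Your proposal is correct and follows essentially the same route as the paper: the paper's one-line proof is precisely the observation that Lemma~\ref{interval} forces the support of the product to be closed under $\ksw$, which you spell out, adding the coefficient count and the fact that every word in the support is initial (both true, though the latter follows because the relations preserve the set of letters used, not by ``standardizing''). The only adjustment is that the ``crux'' you flag is immediate rather than delicate: any word $u$ on the alphabet $[n+m]$ is tautologically a shuffle of $u|_{[n]}$ and $u|_{[n+1,n+m]}$, so no case analysis over the five relations is needed.
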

\begin{proof}
By Lemma \ref{interval}, we know that once a word appears in the right-hand sum, its entire equivalence class must also appear.  
The result then follows.  
\end{proof}

By Lemma~\ref{finitesumoftableau}, we can write $[[h]]$ as a sum over tableaux, which we know can be sorted into finite equivalence classes.
This implies that we can write the product as an explicit sum over sets of tableaux.

\begin{thm}
\label{productistableausum}
Let $h$ be a word in alphabet $[n]$, and let $h\pr$ be a word in alphabet $[m]$.  
For $\mathcal{T}= \{\Tab(w): w \  \hat{\equiv} \ h\}$, we have
\[
[[h]] \cdot [[h\pr]] = \sum_{T \in \mathcal{T}(h\shuffle h\pr)} \sum_{\Tab(w) = T} w,
\]
where $\mathcal{T}(h \shuffle h\pr)$ is the set of shifted tableaux $T$ such that $T|_{[n]} \in \mathcal{T}$ and $\row(T)|_{[n+1,n+m]}\ \ksw\ h\pr[n].$
\end{thm}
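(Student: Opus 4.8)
The plan is to prove the identity at the level of the underlying sets of words, by exhibiting both sides as multiplicity-free formal sums whose supports coincide. Set
$\ma = \{u \text{ with letters in } [n+m] : u|_{[n]}\ \ksw\ h \text{ and } u|_{[n+1,n+m]}\ \ksw\ h\pr[n]\}$
for the set of words we expect to appear. First I would analyze the left-hand side. Since $h$ uses exactly the letters $[n]$ and $h\pr[n]$ exactly the letters $[n+1,n+m]$, and weak $K$-Knuth equivalence preserves the set of distinct letters of a word, every $w\ \ksw\ h$ lies in the alphabet $[n]$ and every $w\pr[n]\ \ksw\ h\pr[n]$ lies in the disjoint alphabet $[n+1,n+m]$. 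Hence each shuffle $w\shuffle w\pr[n]$ is multiplicity-free, a word $u$ occurring in it recovers its pair through $w = u|_{[n]}$ and $w\pr[n] = u|_{[n+1,n+m]}$, and distinct pairs give disjoint shuffles. Using that $\ksw$ is invariant under the global shift $x\mapsto x+n$, I conclude that the left-hand side equals $\sum_{u\in\ma} u$, each coefficient being $1$.

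Next I would show that $\ma$ is a union of shifted Hecke insertion fibers, so it may be reorganized tableau by tableau. If $u\in\ma$ and $\Tab(u\pr)=\Tab(u)$, then $u\pr\ \ksw\ u$ by Corollary~\ref{insertionimplieskknuth}, and Lemma~\ref{interval} gives $u\pr|_{[n]}\ \ksw\ u|_{[n]}\ \ksw\ h$ and $u\pr|_{[n+1,n+m]}\ \ksw\ u|_{[n+1,n+m]}\ \ksw\ h\pr[n]$, so $u\pr\in\ma$. Therefore $\ma = \bigsqcup_{T\in\mathcal{U}}\{u:\Tab(u)=T\}$, where $\mathcal{U}=\{\Tab(u):u\in\ma\}$. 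As the right-hand side of the theorem is likewise multiplicity-free (each word has a single insertion tableau), it remains only to prove $\mathcal{U}=\mathcal{T}(h\shuffle h\pr)$.

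The heart of the argument is this tableau-level identification, which I would carry out using the fact that inserting the reading word recovers the tableau, $\Tab(\row(T))=T$ for every increasing shifted tableau $T$ (a basic property of shifted Hecke insertion, cf.~\cite{PPshifted}, already used implicitly in Lemma~\ref{finitesumoftableau}). For the forward inclusion, given $u\in\ma$ with $T=\Tab(u)$: Lemma~\ref{intervaltableau} yields $T|_{[n]}=\Tab(u)|_{[n]}=\Tab(u|_{[n]})$, which lies in $\mathcal{T}$ since $u|_{[n]}\ \ksw\ h$; and from $\Tab(\row(T))=T=\Tab(u)$ together with Corollary~\ref{insertionimplieskknuth} we obtain $\row(T)\ \ksw\ u$, so Lemma~\ref{interval} gives $\row(T)|_{[n+1,n+m]}\ \ksw\ u|_{[n+1,n+m]}\ \ksw\ h\pr[n]$, whence $T\in\mathcal{T}(h\shuffle h\pr)$. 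For the reverse inclusion, given $T\in\mathcal{T}(h\shuffle h\pr)$ I would take the distinguished representative $u=\row(T)$, so $\Tab(u)=T$; then $\Tab(u|_{[n]})=T|_{[n]}\in\mathcal{T}$ means $T|_{[n]}=\Tab(w)$ for some $w\ \ksw\ h$, and Corollary~\ref{insertionimplieskknuth} forces $u|_{[n]}\ \ksw\ w\ \ksw\ h$, while $u|_{[n+1,n+m]}=\row(T)|_{[n+1,n+m]}\ \ksw\ h\pr[n]$ by hypothesis, so $u\in\ma$ and $T\in\mathcal{U}$. Combining the inclusions gives $\mathcal{U}=\mathcal{T}(h\shuffle h\pr)$ and hence the theorem.

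I expect the main obstacle to be the passage through the upper interval $[n+1,n+m]$: Lemma~\ref{intervaltableau} only controls restrictions to initial intervals $[k]$, so the condition on the large letters cannot be read off directly from a restricted insertion tableau and must instead be transported through reading words via $\Tab(\row(T))=T$ and Lemma~\ref{interval}. A secondary point to verify is that $\mathcal{T}(h\shuffle h\pr)$ is understood to range over tableaux with entries in $[n+m]$ (equivalently, words in the product never introduce letters beyond $[n+m]$), so that no spurious tableaux with larger entries, whose extra entries are invisible to the restriction $\row(T)|_{[n+1,n+m]}$, are counted on the right.
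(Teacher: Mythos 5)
Your proof is correct and rests on the same ingredients as the paper's --- Lemma~\ref{intervaltableau} to control the restriction to $[n]$, and Lemma~\ref{interval} together with the equivalence between a word and the reading word of its insertion tableau to control the restriction to $[n+1,n+m]$ --- but it is considerably more complete. The paper's proof essentially only checks the forward containment (every shuffle inserts to a tableau in $\mathcal{T}(h\shuffle h\pr)$), obtaining $\row(\Tab(u))\ \ksw\ u$ from Theorem~\ref{jdtgivesinsertion} and then appealing to Lemma~\ref{finitesumoftableau} and Theorem~\ref{productissum} for the rest; you make the multiplicity-freeness of both sides explicit, show that the support of the left-hand side is a union of insertion fibers, and prove the reverse inclusion that the paper leaves implicit. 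The one fact you use that the paper never states is $\Tab(\row(T))=T$; it is a standard property of shifted Hecke insertion from \cite{PPshifted}, but if you wanted to stay entirely within this paper's toolkit you could instead run the reverse inclusion on an arbitrary word $u$ with $\Tab(u)=T$ (tableaux with empty fiber contribute nothing to either side) and deduce $u\ \ksw\ \row(T)$ from Theorem~\ref{jdtgivesinsertion}. Your closing caveat --- that $\mathcal{T}(h\shuffle h\pr)$ must implicitly range over tableaux with entries in $[n+m]$, since the two restriction conditions are blind to larger letters --- is well taken and is equally unaddressed in the paper.
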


\begin{proof}
If $w$ is a shuffle of some $w_1\ \ksw\ h$ and $w_2\ \ksw \ h\pr[n]$, then by Lemma \ref{intervaltableau} $\Tab(w)|_{[n]} = \Tab(w_1) \in \mathcal{T}$. 

Since $\Tab(w)\ \ksw \ T_w$ by Theorem~\ref{jdtgivesinsertion}, we see that $\Tab(w)|_{[n+1,n+m]}\ \ksw \ T_w|_{[n+1,n+m]}$. This implies $\row(\Tab(w)|_{[n+1,n+m]})\ \ksw \ h'[n]$.
Now using Lemma \ref{finitesumoftableau} and Theorem \ref{productissum}, we see that the product can be expanded in this way.  
\end{proof}

By Lemma~\ref{finitetableau}, the set $T(h \shuffle h\pr)$ is finite since all of its tableaux are on the finite alphabet $[n+m]$.  
\begin{ex}
Consider $h=12$ and $h\pr=12$.  The set $T(12 \shuffle 12)$ consists of the seven tableaux below. 
\begin{center}
\ytableausetup{boxsize=.5cm}
\begin{ytableau}
1 & 2 & 3 & 4
\end{ytableau} \hspace{.2in}
\begin{ytableau}
1 & 2 & 3\\
\none & 4
\end{ytableau} \hspace{.2in}
\begin{ytableau}
1 & 2 & 4 \\
\none & 3
\end{ytableau}\hspace{.2in}
\begin{ytableau}
1 & 2 & 3 & 4 \\
\none & 3
\end{ytableau}\hspace{.2in}
\begin{ytableau}
1 & 2 & 3 & 4 \\
\none & 4
\end{ytableau}\\
\vspace{.1in}
\begin{ytableau}
1 & 2 & 3  \\
\none & 3 & 4
\end{ytableau}\hspace{.2in}
\begin{ytableau}
1 & 2 & 3 & 4 \\
\none & 3 & 4
\end{ytableau}
\end{center}
For each of these tableaux, restricting to the alphabet $\{1,2\}$ gives the tableau $\Tab(12)$.  Also, the reading word of each restricted to the alphabet $\{3,4\}$ is weak $K$-Knuth equivalent to $h\pr[2]=34$.  
\end{ex}

\begin{cor}\label{productclosed}
The vector space $SKPR$ is closed under the product operation.  
That is, the product of two classes can be expressed as a finite sum of classes.  
\end{cor}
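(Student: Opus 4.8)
The plan is to package together the two theorems that have already done the real work, namely Theorem~\ref{productissum} and Theorem~\ref{productistableausum}, and to supply the one remaining ingredient: that the resulting sum of classes is \emph{finite}. Theorem~\ref{productissum} already guarantees that $[[h]] \cdot [[h\pr]] = \sum_{h\pr\pr} [[h\pr\pr]]$ is a sum of \emph{complete} weak $K$-Knuth classes (once a word appears, its whole class appears, by Lemma~\ref{interval}). So the content of the corollary is entirely the finiteness of the indexing set of classes, and my strategy is to transfer finiteness from the tableau description to the class description.

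First I would invoke Theorem~\ref{productissum} to record that the product is a sum of full classes, leaving only the cardinality of the index set to control. Then I would pass to the tableau picture via Lemma~\ref{finitesumoftableau}: each class $[[h\pr\pr]]$ equals $\sum_T \bigl(\sum_{\Tab(w)=T} w\bigr)$ summed over the \emph{finitely many} increasing shifted tableaux $T$ whose reading word lies in the class of $h\pr\pr$. Thus each class determines a finite, nonempty set of tableaux, and since every increasing shifted tableau has a reading word lying in exactly one weak $K$-Knuth class, distinct classes determine disjoint sets of tableaux.

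On the other side, Theorem~\ref{productistableausum} expresses the same product as $\sum_{T \in \mathcal{T}(h \shuffle h\pr)} \sum_{\Tab(w)=T} w$, a sum over shifted tableaux all drawn from the finite alphabet $[n+m]$. By Lemma~\ref{finitetableau}, the set $\mathcal{T}(h \shuffle h\pr)$ is finite. Combining the two descriptions, every class $[[h\pr\pr]]$ occurring in the product contributes at least one tableau to $\mathcal{T}(h \shuffle h\pr)$, and because distinct classes contribute disjoint tableaux, the number of classes is bounded by $|\mathcal{T}(h \shuffle h\pr)| < \infty$. Hence $[[h]] \cdot [[h\pr]]$ is a finite sum of classes, which is exactly the closure assertion.

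The only delicate point — and the step I would write most carefully — is the bookkeeping between the two viewpoints: although each class $[[h\pr\pr]]$ is an \emph{infinite} formal sum of words, it corresponds to only finitely many tableaux, so it is the finiteness of the tableau set $\mathcal{T}(h \shuffle h\pr)$, not any count of words, that caps the number of classes. Making explicit that the class-to-tableau-set assignment partitions $\mathcal{T}(h \shuffle h\pr)$ is what turns the finiteness of Theorem~\ref{productistableausum} into the finiteness of the class expansion of Theorem~\ref{productissum}; everything else is a direct appeal to results already proven.
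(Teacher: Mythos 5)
Your argument is correct and follows essentially the same route as the paper's own proof: invoke Theorem~\ref{productissum} for the decomposition into full classes, Lemma~\ref{finitesumoftableau} to associate to each class a finite set of insertion tableaux, and Theorem~\ref{productistableausum} together with Lemma~\ref{finitetableau} to bound the total number of tableaux and hence of classes. Your version merely makes explicit the disjointness bookkeeping that the paper leaves implicit.
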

\begin{proof}
By Theorem~\ref{productissum}, we know that the product of two words can be expressed as a sum of weak $K$-Knuth equivalence classes.  
From Lemma~\ref{finitesumoftableau}, each of these classes can be written as a finite sum of tableau equivalence classes.  
That is to say, while weak $K$-Knuth classes are coarser than insertion tableau classes, there are still only finitely many.  
Theorem~\ref{productistableausum} tells us that the product of two words can be written as a finite sum of tableau equivalence classes, so the expansion in Theorem~\ref{productissum} must be finite.  
\end{proof}

Since the product in $SKPR$ is defined in terms of equivalence classes of tableaux, it follows that classes corresponding to URTs should have particularly simple products.

\begin{thm}\label{URTproduct}
Let $T_1$ and $T_2$ be two URTs. Then
\[
\left(\sum_{\Tab(u)= T_1} u\right)\cdot \left(\sum_{\Tab(v)= T_2}v\right)= \sum_{T\in T(T_1\shuffle T_2)}\sum_{\Tab(w)=T}w,
\]
where $T(T_1\shuffle T_2)$ is the finite set of shifted tableaux $T$ such that $T|_{[n]}=T_1$ and \\ $\Tab(\row(T)|_{[n+1,n+m]})=T_2$.
\end{thm}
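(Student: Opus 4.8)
The plan is to realize Theorem~\ref{URTproduct} as the specialization of Theorem~\ref{productistableausum} to the case where the two input classes are the classes of URTs, and then to collapse the indexing set using the defining property of URTs. First I would observe that because $T_1$ is a URT, Definition~\ref{def:urt} gives $\{u : \Tab(u) = T_1\} = \{u : u\ \ksw\ \row(T_1)\}$, so that $\sum_{\Tab(u) = T_1} u = [[\row(T_1)]]$, and likewise $\sum_{\Tab(v) = T_2} v = [[\row(T_2)]]$. Hence the left-hand side of the theorem is exactly the $SKPR$-product $[[\row(T_1)]] \cdot [[\row(T_2)]]$, with $\row(T_1)$ a word in $[n]$ and $\row(T_2)$ a word in $[m]$. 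Applying Theorem~\ref{productistableausum} with $h = \row(T_1)$ and $h\pr = \row(T_2)$ then expresses this product as $\sum_{T}\sum_{\Tab(w)=T} w$, where $T$ ranges over shifted tableaux satisfying $T|_{[n]}\in \mathcal{T} := \{\Tab(w) : w\ \ksw\ \row(T_1)\}$ together with $\row(T)|_{[n+1,n+m]}\ \ksw\ \row(T_2)[n]$.

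The second step is to rewrite each of these two conditions into the form appearing in the statement. Since $T_1$ is a URT, the set $\mathcal{T}$ collapses to the single tableau $T_1$, so the first condition becomes $T|_{[n]} = T_1$. For the second condition, I would introduce $T_2[n]$, the tableau obtained from $T_2$ by adding $n$ to every entry (so $\row(T_2[n]) = \row(T_2)[n]$), and argue that $\row(T)|_{[n+1,n+m]}\ \ksw\ \row(T_2)[n]$ holds if and only if $\Tab(\row(T)|_{[n+1,n+m]}) = T_2[n]$. The forward direction is precisely the URT property applied to $T_2[n]$: every word weak $K$-Knuth equivalent to its reading word rectifies to it. The reverse direction is Corollary~\ref{insertionimplieskknuth}, since words sharing an insertion tableau are weak $K$-Knuth equivalent. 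With both conditions rewritten, the index set coincides with $T(T_1\shuffle T_2)$, and its finiteness follows from Lemma~\ref{finitetableau} exactly as in the remark preceding the theorem.

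The one point requiring care, and the main obstacle, is the claim that $T_2[n]$ is again a URT; this is what licenses replacing weak $K$-Knuth equivalence to $\row(T_2)[n]$ by the condition of rectifying to $T_2[n]$. I would establish it by noting that both the weak $K$-Knuth relations of Definition~\ref{def:wkknuth} and shifted Hecke insertion depend only on the relative order of the letters of a word, and therefore commute with the alphabet shift $w \mapsto w[n]$. Consequently $w \mapsto w[n]$ carries the weak $K$-Knuth class of $\row(T_2)$ bijectively onto that of $\row(T_2)[n]$ while intertwining insertion, so that $T_2$ being the unique insertion tableau in its class forces $T_2[n]$ to be the unique insertion tableau in its class. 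Once this shift-invariance is in hand, the two reformulations of the index conditions are immediate and the theorem follows from Theorem~\ref{productistableausum}.
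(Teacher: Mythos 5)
Your proposal is correct and takes essentially the same route as the paper: the paper's entire proof is your first step, namely that the URT hypothesis identifies the left-hand side with $[[\row(T_1)]]\cdot[[\row(T_2)]]$, which is then evaluated by Theorem~\ref{productistableausum}. Your additional care in translating the index conditions of Theorem~\ref{productistableausum} into those of the statement (collapsing $\mathcal{T}$ to $\{T_1\}$ and checking that alphabet shift preserves the URT property so that the weak $K$-Knuth condition on $\row(T)|_{[n+1,n+m]}$ is equivalent to the insertion condition) fills in details the paper leaves implicit, and is sound.
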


\begin{proof}
Since $T_1$ and $T_2$ are URTs, the left-hand side is $[[\row(T_1)]]\cdot[[\row(T_2)]]$, which equals the right-hand side by Theorem~\ref{productistableausum}. 
\end{proof}

\begin{remark}\label{rem:nocoproduct}
In the Poirier-Reutenauer Hopf algebra, its $K$-theoretic analogue, and its shifted analogue, one defines a coproduct by first defining a coproduct on words: 
\[
\blacktriangle(w_1\ldots w_n)=\displaystyle \sum_{i=0}^n  \std(w_1\ldots w_i) \otimes \std(w_{i+1}\ldots w_n).
\]
Here, $\std(w)$ is the unique initial word whose letters are in the same relative order as the letters in $w$. 
One then extends this coproduct to equivalence classes by defining $\Delta([[w]])=\sum \blacktriangle(h)$, where we sum over all $h$ equivalent to $w$. 
The right-hand side can be expressed as a sum of tensors of equivalence classes: $\Delta([[w]])=\sum [[u]]\otimes [[v]]$. 
This is necessary for the coproduct to be well-defined in these vector spaces, and is not true in the case of $SKPR$. 

Indeed, consider the the word 1232. The term $\std(1)\otimes \std(232)=1\otimes 121$ appears in $\blacktriangle(1232)$, so if $\Delta([[1232]])$ could be expressed as a sum of tensors of weak $K$-Knuth equivalence classes, it should contain the term $1 \otimes 211$. 
The words with letters exactly $[3]$ that have $1 \otimes 211$ in their coproduct are 1322, 2311, and 3211.
Since $\Tab(1232)$ is a URT, it is easy to see that 1322, 2311, and 3211 are not weak $K$-Knuth equivalent to 1232 by comparing insertion tableaux. 
We conclude that $1\otimes 121$ appears in $\Delta([[1232]])$ but $ 1 \otimes 211$ does not, which implies that the standard coproduct does not work in $SKPR$.
\end{remark}

We now construct an algebra homomorphism that takes a weak $K$-Knuth equivalence class $[[h]]$ to a sum of fundamental quasisymmetric functions. We will use this map to prove a Littlewood-Richardson rule for the $K_\lambda$.

\begin{thm}
The linear map $\phi:$SKPR$\to$ QSym defined by 
\[
[[h]] = \sum_{w\ksw h} f_{\D(w)}
\]
is an algebra homomorphism.
\end{thm}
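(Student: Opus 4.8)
The plan is to reduce everything to multiplicativity on the spanning classes and then compare both sides against the classical shuffle expansion of fundamental quasisymmetric functions. Since $\phi$ is linear by definition, it suffices to check that $\phi([[h]]\cdot[[h']]) = \phi([[h]])\cdot\phi([[h']])$ for two initial words $h$ on $[n]$ and $h'$ on $[m]$. Expanding the right-hand side bilinearly in $\mathrm{QSym}$ gives $\phi([[h]])\cdot\phi([[h']]) = \sum_{w\ \ksw\ h}\sum_{w'\ \ksw\ h'} f_{\D(w)}\,f_{\D(w')}$. Because adding $n$ to every letter of $w'$ leaves its descent set unchanged, we have $f_{\D(w')} = f_{\D(w'[n])}$, so we may rewrite each factor in terms of $w'[n]$, which lives on the disjoint upper alphabet $[n+1,n+m]$.

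The engine of the argument is the standard identity $f_{\D(a)}\cdot f_{\D(b)} = \sum_{z\in a\,\shuffle\, b} f_{\D(z)}$, valid whenever $a$ and $b$ are words on disjoint alphabets with every letter of $a$ strictly below every letter of $b$. I would justify this by standardization: since $\D(u)=\D(\std(u))$ for any word $u$, and since $a$ lies entirely below $b$, standardizing a shuffle $z\in a\,\shuffle\, b$ carries it to a shuffle of $\std(a)$ with $\std(b)[\,|a|\,]$ having the same descent set. The claim therefore reduces to the familiar product rule for fundamental quasisymmetric functions indexed by permutations (the Malvenuto--Reutenauer shuffle formula, equivalently the theory of $P$-partitions). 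Applying it with $a=w$ and $b=w'[n]$ turns the right-hand side into the triple sum $\sum_{w\ \ksw\ h}\sum_{w'\ \ksw\ h'}\sum_{z\in w\,\shuffle\, w'[n]} f_{\D(z)}$.

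The crux of the proof, and the step I expect to require the most care, is showing that this triple sum is multiplicity-free and has exactly the same support as the left-hand side. Here I would exploit the disjointness of $[n]$ and $[n+1,n+m]$: any word $z$ appearing in the product uniquely determines both factors, since $w=z|_{[n]}$ and $w'[n]=z|_{[n+1,n+m]}$ are forced, and the interleaving itself is recovered from which positions of $z$ carry small versus large letters. Hence each such $z$ is produced by a single pair $(w,w')$ through a single shuffle, and the triple sum collapses to $\sum_z f_{\D(z)}$ taken over the set of words occurring in $\sum_{w\ \ksw\ h,\ w'\ \ksw\ h'} w\,\shuffle\, w'[n]$. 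For the left-hand side, Theorem~\ref{productissum} expresses $[[h]]\cdot[[h']] = \sum_{h''}[[h'']]$ as a sum of full, pairwise disjoint weak $K$-Knuth classes, so $\phi([[h]]\cdot[[h']]) = \sum_{h''}\sum_{z\ \ksw\ h''} f_{\D(z)}$ is again $\sum_z f_{\D(z)}$ over precisely that same support (that the support is a union of entire classes is exactly the content of the proof of Theorem~\ref{productissum} via Lemma~\ref{interval}). Comparing the two collapsed sums yields $\phi([[h]]\cdot[[h']]) = \phi([[h]])\cdot\phi([[h']])$, completing the verification.
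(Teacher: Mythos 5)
Your proposal is correct and follows essentially the same route as the paper: the paper also reduces to the shuffle identity $f_{\D(w')}\cdot f_{\D(w'')}=\sum_{w\in Sh(w',w''[n])}f_{\D(w)}$ (citing Lam--Pylyavskyy) and then sums over the two equivalence classes. Your additional remarks on the multiplicity-freeness coming from the disjoint alphabets and on invoking Theorem~\ref{productissum} to interpret $\phi$ of the product are careful elaborations of steps the paper leaves implicit, not a different argument.
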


\begin{proof}
We show that the map preserves the product. 
First, \cite[Proposition~5.9]{lam2007combinatorial} implies
\[
f_{\D(w\pr)}\cdot f_{\D(w\pr\pr)} = \sum_{w\in Sh(w\pr,w\pr\pr[n])} f_{\D(w)},
\]
where the sum is over all shuffles of $w\pr,w\pr\pr[n]$. 
It follows that
\[\begin{split}
\phi([[h_1]]\cdot[[h_2]]) &= \phi\left(\sum_{w\pr\ksw h_1,w\pr\pr\ksw h_2}w\pr\shuffle w\pr\pr\right)\\
&=\sum_{w\pr\ksw h_1,w\pr\pr\ksw h_2} \quad \sum_{w\in Sh(w\pr,w\pr\pr[n])}f_{\D(w)}\\
&=\left(\sum_{w\pr\ksw h_1}f_{\D(w\pr)}\right)\left(\sum_{w\pr\pr\ksw h_2}f_{\D(w\pr\pr)}\right)=\phi([[h_1]])\phi([[h_2]]).
\end{split}
\]
\end{proof}

\begin{thm}\label{mapisksum}
Letting $\lambda(T)$ denote the shape of $T$, we have
\[
\phi([[h]]) = \sum_{\row(T)\ksw h} K_{\lambda(T)}.
\]
\end{thm}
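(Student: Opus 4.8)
The plan is to prove the statement $\phi([[h]]) = \sum_{\row(T) \ksw h} K_{\lambda(T)}$ by combining the definition of $\phi$, the expansion of $K_\lambda$ from Theorem~\ref{kisgood}, and the partition of the weak $K$-Knuth class of $h$ by insertion tableaux provided by Lemma~\ref{finitesumoftableau}. The key structural fact I would lean on is that, since $\Tab(u) = \Tab(v)$ implies $u \ksw v$ (Corollary~\ref{insertionimplieskknuth}), the set of words equivalent to $h$ decomposes cleanly: each such word $w$ has a well-defined insertion tableau $\Tab(w)$, and the reading word of that tableau is itself equivalent to $h$ (by Corollary~\ref{insertionimplieskknuth} applied to $\row(\Tab(w))$ and $w$, which share an insertion tableau).

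First I would write out the definition of $\phi$ directly:
\[
\phi([[h]]) = \sum_{w \ksw h} f_{\D(w)}.
\]
Next I would partition the indexing set $\{w : w \ksw h\}$ according to the value of the insertion tableau $\Tab(w)$. By Lemma~\ref{finitesumoftableau}, the words equivalent to $h$ are grouped into finitely many classes indexed by increasing shifted tableaux $T$ whose reading word lies in the weak $K$-Knuth class of $h$, i.e.\ those $T$ with $\row(T) \ksw h$. This lets me rewrite the sum as an iterated sum:
\[
\phi([[h]]) = \sum_{\row(T) \ksw h}\ \sum_{\Tab(w) = T} f_{\D(w)}.
\]
The inner sum is then identified using Theorem~\ref{kisgood}, which states precisely that for any fixed increasing shifted tableau $T$ of shape $\lambda$ one has $K_\lambda = \sum_{\Tab(w) = T} f_{\D(w)}$. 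Substituting $\lambda = \lambda(T)$ collapses the inner sum to $K_{\lambda(T)}$, yielding the claimed identity.

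The main subtlety to get right, rather than a deep obstacle, is verifying that the indexing set for the outer sum is exactly $\{T : \row(T) \ksw h\}$ and that this collection of tableaux is well-defined and coherent with Lemma~\ref{finitesumoftableau}. Specifically, I must confirm that every word $w \ksw h$ does land in exactly one class $\{w' : \Tab(w') = T\}$ with $\row(T) \ksw h$, and conversely that every such $T$ contributes. The forward direction uses that $\Tab(w) = \Tab(\row(\Tab(w)))$ together with Corollary~\ref{insertionimplieskknuth} to see $\row(\Tab(w)) \ksw w \ksw h$; the reverse direction is immediate since any $w$ with $\Tab(w) = T$ satisfies $w \ksw \row(T) \ksw h$. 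Once this bookkeeping is checked, the three displayed equalities chain together and the proof is complete.
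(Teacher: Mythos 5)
Your proposal is correct and follows essentially the same route as the paper: unfold the definition of $\phi$, partition the words $w \ksw h$ by their insertion tableau via Lemma~\ref{finitesumoftableau} and Corollary~\ref{insertionimplieskknuth}, and collapse each inner sum to $K_{\lambda(T)}$ using Theorem~\ref{kisgood}. The paper's proof is just the one-line chain of these equalities; your extra bookkeeping verifying that the outer index set is exactly $\{T : \row(T) \ksw h\}$ is a harmless (and slightly more careful) elaboration of the same argument.
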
 

\begin{proof}
By Theorem \ref{kisgood}, we see
\[
\phi([[h]]) = \sum_{w \ksw  h}f_{\D(w)} = \sum_{T\ksw \Tab(h)}\sum_{\Tab(w)=T}f_{\D(w)} = \sum_{T \ksw \Tab(h)}K_{\lambda(T)} = \sum_{\row(T) \ksw  h} K_{\lambda(T)}.
\]
\end{proof}

With the algebra homomorphism $\phi$ defined above, we can show the Littlewood-Richardson rule for $\symfcn$ by using Theorem \ref{mapisksum}.
\begin{thm}\label{LR product}
Let $T$ be a URT of shape $\mu$. Then we have
\[
K_\lambda K_\mu = \sum_\nu c^\nu_{\lambda,\mu}K_\nu,
\]
where $c^\nu_{\lambda,\mu}$ is given by the number of increasing shifted skew tableaux $R$ of shape $\nu/\lambda$ such that $\Tab(\row(R))=T$.
\end{thm}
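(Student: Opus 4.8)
The plan is to compute $K_\lambda K_\mu$ by pushing the product through the algebra homomorphism $\phi\colon SKPR\to QSym$. First I would fix a URT $S$ of shape $\lambda$ with initial reading word, for instance the superstandard tableau (a URT by Theorem~\ref{thm:urt}), so that $S$ uses the alphabet $[n]$ with $n=|\lambda|$; and I would use the given URT $T$ of shape $\mu$, after an order-preserving relabeling of its distinct entries to an initial alphabet $[m]$. This relabeling is harmless: shifted Hecke insertion and weak $K$-Knuth equivalence depend only on the relative order of letters, so the relabeling preserves the URT property and, applied simultaneously to the skew tableaux, induces a bijection between $\{R:\Tab(\row(R))=T\}$ and its relabeled counterpart; hence the count $c^\nu_{\lambda,\mu}$ is unchanged. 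Since $S$ and $T$ are URTs, every word in $[[\row(S)]]$ inserts to $S$ and every word in $[[\row(T)]]$ inserts to $T$, so Theorem~\ref{mapisksum} collapses each sum to a single term, giving $\phi([[\row(S)]])=K_\lambda$ and $\phi([[\row(T)]])=K_\mu$.

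Next I would apply $\phi$ to the URT product of Theorem~\ref{URTproduct}. In the notation of that theorem, $T(S\shuffle T)$ is the set of increasing shifted tableaux $T'$ with $T'|_{[n]}=S$ and with $\Tab(\row(T')|_{[n+1,n+m]})$ equal to $T$ (entries shifted up by $n$). Since $\phi$ sends each word $w$ to $f_{\D(w)}$, applying it termwise to the product and then invoking Theorem~\ref{kisgood} on each inner sum yields
\[
K_\lambda K_\mu \;=\; \phi\!\left([[\row(S)]]\cdot[[\row(T)]]\right) \;=\; \sum_{T'\in T(S\shuffle T)}\ \sum_{\Tab(w)=T'} f_{\D(w)} \;=\; \sum_{T'\in T(S\shuffle T)} K_{\lambda(T')}.
\]
Grouping the terms by the shape $\nu=\lambda(T')$ expresses $K_\lambda K_\mu$ as $\sum_\nu d^\nu_{\lambda,\mu}K_\nu$, where $d^\nu_{\lambda,\mu}$ is the number of $T'\in T(S\shuffle T)$ of shape $\nu$.

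The remaining and central step is to identify $d^\nu_{\lambda,\mu}$ with $c^\nu_{\lambda,\mu}$. Given $T'\in T(S\shuffle T)$ of shape $\nu$, the constraint $T'|_{[n]}=S$ forces the boxes labeled by $[n]$ to fill exactly the shape $\lambda$, so the boxes labeled in $[n+1,n+m]$ form a shifted skew shape $\nu/\lambda$; let $R$ be that skew tableau with all entries lowered by $n$. Then $R$ is increasing of shape $\nu/\lambda$, and the defining condition on $T'$ becomes $\Tab(\row(R))=T$, because shifted Hecke insertion commutes with a global shift of the alphabet. Conversely, from any increasing skew tableau $R$ of shape $\nu/\lambda$ with $\Tab(\row(R))=T$ I would reconstruct $T'$ by placing $S$ on $\lambda$ and $R$ (shifted up by $n$) on $\nu/\lambda$; the only thing to verify is that the glued filling is increasing, and this is automatic since every entry of the shifted $R$ exceeds $n$ and is therefore strictly larger than every entry of $S$ adjacent to it across the $\lambda$-boundary. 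These two maps are mutually inverse, so $d^\nu_{\lambda,\mu}=c^\nu_{\lambda,\mu}$ and the theorem follows.

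The main obstacle is this last bijection: one must check that the two conditions defining $T(S\shuffle T)$ translate precisely into ``$R$ is an increasing skew tableau with $\Tab(\row(R))=T$,'' and in particular that regluing $S$ and $R$ can never violate strict increase. The only other point requiring care is the reduction allowing $T$ to be taken with an initial reading word, which is exactly what makes $[[\row(T)]]$ a genuine element of $SKPR$ so that the homomorphism property of $\phi$ can be used; this is where invariance of the whole construction under order-preserving relabeling enters. Once these are in place, the homomorphism $\phi$ together with Theorems~\ref{mapisksum},~\ref{URTproduct}, and~\ref{kisgood} supply the rest.
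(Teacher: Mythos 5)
Your proposal is correct and follows essentially the same route as the paper: fix a URT of shape $\lambda$, apply the homomorphism $\phi$ to the product $[[\row(T\pr)]]\cdot[[\row(T)]]$, and expand via Theorems~\ref{URTproduct} and~\ref{mapisksum}. The only difference is that you spell out the final identification of tableaux in $T(T\pr\shuffle T)$ of shape $\nu$ with increasing skew tableaux of shape $\nu/\lambda$ rectifying to $T$ (and the initial-alphabet relabeling of $T$), steps the paper leaves implicit in ``giving our result.''
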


\begin{proof}
In addition to $T$, fix a URT $T\pr$ of shape $\lambda$. Then by Theorems \ref{URTproduct} and \ref{mapisksum}, we have
\[
\begin{split}
K_\lambda K_\mu &= \phi([[\row(T\pr)]])\phi([[\row(T)]])\\
&= \phi([[\row(T\pr)]]\cdot[[\row(T)]])\\
&= \sum_{R\in T(T\pr\shuffle T)}\sum_{\Tab(w)=R}w\\
&= \sum_{R\in T(T\pr\shuffle T)} K_{\lambda(R)},
\end{split}
\]
where $T(T\pr\shuffle T)$ is the set of shifted tableaux $R$ such that $R|_{[|\lambda |]}=T\pr$ and \\ $\Tab(\row(R|_{[|\lambda |+1,|\lambda |+|\mu |]}))=T$, giving our result.
\end{proof}

This rule, up to sign, coincides with the rules of Clifford, Thomas and Yong~\cite[Theorem 1.2]{clifford2014k} and Buch and Samuel~\cite[Corollary 4.8]{BuchandSamuel2013}.
Theorem~\ref{t:ktheory} follows immediately.

\section*{Acknowledgments}
This research was part of the 2015 REU program at the University of
Minnesota, Twin Cities, and was supported by RTG grant NSF/DMS-1148634. We would like to thank Pasha
Pylyavskyy and Vic Reiner for their mentorship and support. 

\bibliographystyle{plain}
\bibliography{main}

\end{document}